\newtheorem{thm}{Theorem}[section]
\newtheorem{lem}[thm]{Lemma}
\newtheorem{prop}[thm]{Proposition}
\newtheorem{cor}[thm]{Corollary}
\newtheorem{dfn}[thm]{Definition}
\newtheorem*{thm*}{Theorem}
\newtheorem*{cor*}{Corollary}
\theoremstyle{remark}
\newtheorem*{ex}{Example}
\newtheorem*{rmk}{Remark}
\renewcommand{\bf}[1]{\mathbf{#1}}
\renewcommand{\rm}[1]{\mathrm{#1}}
\renewcommand{\cal}[1]{\mathcal{#1}}
\newcommand{\bbN}{\mathbb{N}}
\newcommand{\bbZ}{\mathbb{Z}}
\newcommand{\bfV}{\mathbf{V}}
\newcommand{\bfX}{\mathbf{X}}
\newcommand{\bfY}{\mathbf{Y}}
\newcommand{\bfZ}{\mathbf{Z}}
\newcommand{\sfE}{\mathsf{E}}
\newcommand{\sfF}{\mathsf{F}}
\newcommand{\sfG}{\mathsf{G}}
\newcommand{\sfV}{\mathsf{V}}
\renewcommand{\d}{\mathrm{d}}
\newcommand{\C}{\mathcal{C}}
\newcommand{\I}{\mathcal{I}}
\newcommand{\J}{\mathcal{J}}
\newcommand{\K}{\mathcal{K}}
\renewcommand{\P}{\mathcal{P}}
\newcommand{\frH}{\mathfrak{H}}
\newcommand{\frK}{\mathfrak{K}}
\newcommand{\frL}{\mathfrak{L}}
\renewcommand{\L}{\Lambda}
\renewcommand{\S}{\Sigma}
\renewcommand{\a}{\alpha}
\renewcommand{\b}{\beta}
\renewcommand{\l}{\lambda}
\newcommand{\s}{\sigma}
\renewcommand{\phi}{\varphi}
\renewcommand{\hat}[1]{\widehat{#1}}
\newcommand{\ol}[1]{\overline{#1}}
\newcommand{\fin}{\nolinebreak\hspace{\stretch{1}}$\lhd$}
\renewcommand{\t}[1]{\widetilde{#1}}
\renewcommand{\to}{\longrightarrow}
\newcommand{\actson}{\curvearrowright}
\newcommand{\uhr}{\!\upharpoonright}
\begin{document}

\title{\Large{NON-CONVENTIONAL ERGODIC AVERAGES FOR SEVERAL COMMUTING ACTIONS OF AN AMENABLE GROUP}}

\author{\textsc{Tim Austin}\footnote{Research supported by a fellowship from the Clay Mathematics Institute}\\ \\ \small{Courant Institute, New York University}\\ \small{New York, NY 10012, U.S.A.}\\ \small{\texttt{tim@cims.nyu.edu}}}

\date{}

\maketitle




\begin{abstract}
Let $(X,\mu)$ be a probability space, $G$ a countable amenable group and $(F_n)_n$ a left F\o lner sequence in $G$.
This paper analyzes the non-conventional ergodic averages
\[\frac{1}{|F_n|}\sum_{g \in F_n}\prod_{i=1}^d (f_i\circ T_1^g\cdots T_i^g)\]
associated to a commuting tuple of $\mu$-preserving actions $T_1$, \ldots, $T_d:G\actson X$ and $f_1$, \ldots, $f_d \in L^\infty(\mu)$.   We prove that these averages always converge in $\|\cdot\|_2$, and that they witness a multiple recurrence phenomenon when $f_1 = \ldots = f_d = 1_A$ for a non-negligible set $A\subseteq X$.  This proves a conjecture of Bergelson, McCutcheon and Zhang.  The proof relies on an adaptation from earlier works of the machinery of sated extensions.
\end{abstract}

\tableofcontents

\section{Introduction}

Let $(X,\mu)$ be a probability space, $G$ a countable amenable group, and $T_1$, \ldots, $T_d:G\actson (X,\mu)$ a tuple of $\mu$-preserving actions of $G$ which commute, meaning that
\[i\neq j \quad \Longrightarrow \quad T_i^gT_j^h = T_j^hT_i^g \quad \forall g,h \in G.\]
Also, let $(F_n)_n$ be a left F\o lner sequence of subsets of $G$; this will be fixed for the rest of the paper.

In this context, Bergelson, McCutcheon and Zhang have proposed in~\cite{BerMcCZha97} the study of the non-conventional ergodic averages
\begin{eqnarray}\label{eq:Lambda}
\L_n(f_1,\ldots,f_d) := \frac{1}{|F_n|}\sum_{g \in F_n}\prod_{i=1}^d(f_i\circ T_1^g\cdots T_i^g)
\end{eqnarray}
for functions $f_1,\ldots,f_d \in L^\infty(\mu)$.  These are an analog for commuting $G$-actions of the non-conventional averages for a commuting tuple of transformations, as introduced by Furstenberg and Katznelson~\cite{FurKat78} for their proof of the multi-dimensional generalization of Szemer\'edi's Theorem.  Other analogs are possible, but the averages above seem to show the most promise for building a theory: this is discussed in~\cite{BerMcCZha97} and, for topological dynamics, in~\cite{BerHin92}, where some relevant counterexamples are presented.

The main results of~\cite{BerMcCZha97} are that these averages converge, and that one has an associated multiple recurrence phenomenon, when $d=2$.  The first of these conclusions can be extended to arbitrary $d$ along the lines of Walsh's recent proof of convergence for polynomial nilpotent non-conventional averages~(\cite{Walsh12}).

\vspace{7pt}

\noindent\textbf{Theorem A.}\quad \emph{In the setting above, the functional averages $\L_n(f_1,\ldots,f_d)$ converge in the norm of $L^2(\mu)$ for all $f_1$, \ldots, $f_d \in L^\infty(\mu)$.}

\vspace{7pt}

Zorin-Kranich has made the necessary extensions to Walsh's argument in~\cite{Zor13}.  However, that proof gives essentially no information about the limiting function, and in particular does not seem to enable a proof of multiple recurrence.  The present paper gives both a new proof of Theorem A, and a proof of the following.

\vspace{7pt}

\noindent\textbf{Theorem B.}\quad \emph{If $\mu(A) > 0$, then
\begin{multline*}
\lim_{n\to\infty}\int_X \L_n(1_A,\ldots,1_A)\,\d\mu\\ = \lim_{n\to\infty}\frac{1}{|F_n|}\sum_{g \in F_n}\mu\big(T_1^{g^{-1}}A\cap \cdots \cap (T_1^{g^{-1}}\cdots T_d^{g^{-1}})A\big) > 0.
\end{multline*}
In particular, the set
\[\big\{g \in G\,\big|\ \mu\big(T_1^{g^{-1}}A\cap \cdots \cap (T_1^{g^{-1}}\cdots T_d^{g^{-1}})A\big) > 0\big\}\]
has positive upper Banach density relative to $(F_n)_{n\geq 1}$.}

\vspace{7pt}

As in the classical case of~\cite{FurKat78}, this implies the following Szemer\'edi-type result for amenable groups.

\begin{cor*}
If $E \subseteq G^d$ has positive upper Banach density relative to  $(F_n^d)_{n\geq 1}$, then the set
\begin{multline*}
\big\{g \in G\,\big|\ \exists (x_1,\ldots,x_d) \in G^d\\ \hbox{s.t.}\ \{(g^{-1}x_1,x_2,\ldots,x_d),\ldots,(g^{-1}x_1,\ldots,g^{-1}x_d)\} \subseteq E\big\}
\end{multline*}
has positive upper Banach density relative to $(F_n)_{n\geq 1}$. \qed
\end{cor*}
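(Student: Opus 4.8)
The plan is to derive the Corollary from Theorem B by a standard correspondence-principle-style argument, exactly parallel to how the multi-dimensional Szemer\'edi theorem of Furstenberg--Katznelson is deduced from their multiple recurrence theorem. Fix $E\subseteq G^d$ of positive upper Banach density relative to $(F_n^d)_n$, say $\d^*(E) = \delta > 0$. The first step is to build a dynamical system out of $E$. I would work on the compact space $X := \{0,1\}^{G^d}$, equipped with the $d$ commuting $G$-actions $T_1,\dots,T_d$ where $T_i^g$ shifts a configuration in the $i$-th coordinate direction: $(T_i^g\omega)(x_1,\dots,x_d) = \omega(x_1,\dots,x_{i-1},x_i g,x_{i+1},\dots,x_d)$ (up to a choice of left/right conventions matched to those in~(\ref{eq:Lambda})). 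Let $A := \{\omega \in X : \omega(e,\dots,e) = 1\}$, a clopen set, and let $\xi := 1_E \in X$. Then $x = (x_1,\dots,x_d) \in E$ iff $T_1^{x_1}\cdots$ applied appropriately to $\xi$ lands in $A$; the point is that membership of a translate of $\xi$ in the cylinder $A$ encodes membership in $E$.

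**Second**, I would produce an invariant measure capturing the density of $E$. Along the F\o lner sequence $(F_n^d)_n$ realizing the density $\delta$, form the empirical averages $\frac{1}{|F_n^d|}\sum_{x\in F_n^d}\delta_{(T_1,\dots,T_d)^x\xi}$ on $X$ and pass to a weak$^*$ limit point $\mu$; by construction $\mu$ is invariant under all $d$ commuting actions, and a routine check gives $\mu(A) \geq \delta > 0$ (here one uses that $A$ is clopen, so its indicator is continuous, and that $(F_n^d)_n$ was chosen to realize the upper Banach density of $E$). **Third**, apply Theorem B to $(X,\mu)$, the commuting tuple $T_1,\dots,T_d$, and this set $A$: it yields that
\[
\big\{g\in G : \mu\big(T_1^{g^{-1}}A\cap\cdots\cap(T_1^{g^{-1}}\cdots T_d^{g^{-1}})A\big) > 0\big\}
\]
has positive upper Banach density relative to $(F_n)_n$. **Fourth**, for each such $g$, unwind what positivity of that intersection means in terms of $\mu$ and hence in terms of $\xi = 1_E$: since $\mu$ is an average of point masses at shifts of $\xi$, a nonempty intersection of cylinder sets forces the existence of some shift of $\xi$ lying in all of them simultaneously, which translates back into the existence of a point $(x_1,\dots,x_d)\in G^d$ with the required pattern $\{(g^{-1}x_1,x_2,\dots,x_d),\dots,(g^{-1}x_1,\dots,g^{-1}x_d)\}\subseteq E$. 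This is where one must be careful to align the direction conventions in~(\ref{eq:Lambda}) with the shift directions on $\{0,1\}^{G^d}$ so that the pattern comes out exactly as stated.

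**The main obstacle** I expect is bookkeeping rather than conceptual: matching the left/right and inverse conventions so that (i) the $d$ shift actions on $\{0,1\}^{G^d}$ genuinely commute in the required sense, (ii) the F\o lner sequence $(F_n^d)$ used to build $\mu$ is compatible with the F\o lner sequence $(F_n)$ appearing in Theorem B (so that "positive upper Banach density relative to $(F_n)_n$" transfers correctly), and (iii) the cylinder pattern $T_1^{g^{-1}}A\cap\cdots\cap(T_1^{g^{-1}}\cdots T_d^{g^{-1}})A$ unwinds to precisely the combinatorial configuration in the Corollary. Once the correspondence dictionary is set up correctly, the deduction is immediate from Theorem B, with no further ergodic-theoretic input needed. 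I would also note in passing that, as in the classical case, one does not even need convergence (Theorem A) for the Corollary---only the positivity statement of Theorem B---so the argument is entirely soft given Theorem B.
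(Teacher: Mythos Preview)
Your proposal is correct and is exactly the standard Furstenberg correspondence-principle argument that the paper invokes (the paper does not give its own proof, but simply notes that ``This deduction is quite standard, and can be found in~\cite{BerMcCZha97}''). One small tightening for your Step 4: $\mu$ is a weak$^*$ \emph{limit} of averages of point masses, not itself such an average, so the clean way to extract a point of $E$ is to observe that the intersection $T_1^{g^{-1}}A\cap\cdots\cap(T_1^{g^{-1}}\cdots T_d^{g^{-1}})A$ is clopen and that $\mu$ is supported on the orbit closure of $\xi$, whence positive $\mu$-measure forces the (open) set to meet the orbit of $\xi$.
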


This deduction is quite standard, and can be found in~\cite{BerMcCZha97}.

Our proofs of Theorems A and B are descended from some work for commuting tuples of transformations: the proof of non-conventional-average convergence in~\cite{Aus--nonconv}, and that of multiple recurrence in~\cite{Aus--newmultiSzem}.  Both of those papers offered alternatives to earlier proofs, using new machinery for extending an initially-given probability-preserving action to another action under which the averages behave more simply.  The present paper will adapt to commuting tuples of $G$-actions the notion of a `sated extension', which forms the heart of the streamlined presentation of that machinery in~\cite{Aus--thesis}.  Further discussion of this method may be found in that reference.

The generalization of the notion of satedness is nontrivial, but fairly straightforward: see Section~\ref{sec:func} below.  However, more serious difficulties appear in how it is applied.  Heuristically, if a given system satisfies a satedness assumption, then, in any extension of that system, this constrains how some canonical $\s$-subalgebra `sits' relative to the $\s$-algebra lifted from the original system.  An appeal to satedness always relies on constructing a particular extension for which this constraint implies some other desired consequence.  The specific constructions of system extensions used in~\cite{Aus--nonconv,Aus--newmultiSzem,Aus--thesis} do not generalize to commuting actions of a non-Abelian group $G$.  This is because they rely on the commutativity of the diagonal actions $T_i\times \cdots \times T_i$ of $G$ on $X^d$ with the `off-diagonal' action generated by
\[T_1^g\times \cdots \times (T_1^g\cdots T^g_d), \quad g \in G.\]

Thus, a key part of this paper is a new method of extending probability-preserving $G^d$-systems.  It is based on a version of the Host-Kra self-joinings from~\cite{HosKra05} and~\cite{Hos09}.  It also relies on a quite general result about probability-preserving systems, which may be of independent interest: Theorem~\ref{thm:recoverG} asserts that, given a probability-preserving action of a countable group and an extension of that action restricted to a subgroup, a compatible further extension may be found for the action of the whole group.

Developing ideas from~\cite{HosKra05}, we find that the asymptotic behaviour of our non-conventional averages can be estimated by certain integrals over these Host-Kra-like extensions (Theorem~\ref{thm:ineq}).  On the other hand, a suitable satedness assumption on a system gives extra information on the structure of those extensions, and combining these facts then implies simplified behaviour for the non-conventional averages for that system.  Finally, the existence of sated extensions for all systems (Theorem~\ref{thm:sateds-exist}) then enables proofs of convergence and multiple recurrence similar to those in~\cite{Aus--nonconv} and~\cite{Aus--newmultiSzem}, respectively.

An interesting direction for further research is suggested by the work of Bergelson and McCutcheon in~\cite{BerMcC07}.  They study multiple recurrence phenomena similar to Theorem B when $d=3$, but without assuming that the group $G$ is amenable.  As output, they prove that the set
\[\big\{g \in G\,\big|\ \mu\big(T_1^{g^{-1}}A\cap (T_1^{g^{-1}}T_2^{g^{-1}} )A \cap (T_1^{g^{-1}}T_2^{g^{-1}} T_3^{g^{-1}})A\big) > 0\big\}\]
is `large' in a sense adapted to non-amenable groups, in terms of certain special ultrafilters in the Stone-\v{C}ech compactification of $G$.  In particular, their result implies that this set is syndetic in $G$.  Could their methods be combined with those below to extend this result to larger values of $d$?

\section{Generalities on actions and extensions}

\subsection{Preliminaries}

If $d \in \bbN$ then $[d] := \{1,2,\ldots,d\}$, and more generally if $a,b \in \bbZ$ with $a \leq b$ then
\[(a;b] = [a+1;b] = [a+1;b+1) = (a;b+1):= \{a+1,\ldots,b\}.\]
The power set of $[d]$ will be denoted $\cal{P}[d]$, and we let $\binom{[d]}{\geq p}:= \{e \in \cal{P}[d]\,|\ |e|\geq p\}$.

Next, if $\cal{A} \subseteq \P[d]$, then it is an \textbf{up-set} if
\[a,b \in \cal{A} \quad \Longrightarrow \quad a\cup b \in\cal{A}.\]
The set
\[\langle e \rangle := \{a \subseteq [d]\,|\ a \supseteq e\}\]
is an up-set for every $e \subseteq [d]$, and every up-set is a union of such examples. On the other hand, $\cal{B} \subseteq \P[d]$ is an \textbf{antichain} if
\[a,b\in \cal{B} \quad \hbox{and} \quad a \subseteq b \quad \Longrightarrow \quad a = b.\]
Any up-set contains a unique anti-chain of minimal elements.

Standard notions from probability theory will be assumed throughout this paper.  If $(X,\mu)$ is a probability space with $\s$-algebra $\S$, and if $\Phi,\S_1,\S_2 \subseteq \S$ are $\s$-subalgebras with $\Phi \subseteq \S_1\cap \S_2$, then $\S_1$ and $\S_2$ are \textbf{relatively independent} over $\Phi$ under $\mu$ if
\[\int_X fg\,\d\mu = \int_X\sfE_\mu(f\,|\,\Phi)\sfE_\mu(g\,|\,\Phi)\,\d\mu\]
whenever $f,g \in L^\infty(\mu)$ are $\S_1$- and $\S_2$-measurable, respectively.  Relatedly, if $(X,\mu)$ is standard Borel, then on $X^2$ we may form the \textbf{relative product} measure $\mu\otimes_\Phi \mu$ over $\Phi$ by letting $x\mapsto \mu_x$ be a disintegration of $\mu$ over the $\s$-subalgebra $\Phi$ and then setting
\[\mu\otimes_\Phi\mu := \int_X \mu_x\otimes \mu_x\,\mu(\d x).\]

If $G$ is a countable group, then a \textbf{$G$-space} is a triple $(X,\mu,T)$ consisting of a probability space $(X,\mu)$ and an action $T:G\actson X$ by measurable, $\mu$-preserving transformations.  Passing to an isomorphic model if necessary, we will henceforth assume that $(X,\mu)$ is standard Borel.  Often, a $G$-space will also be denoted by a boldface letter such as $\bfX$.

If $\bfX = (X,\mu,T)$ is a $G$-space, then $\S_X$ or $\S_\bfX$ will denote its $\s$-algebra of $\mu$-measurable sets.  A \textbf{factor} of such a $G$-space is a $\s$-subalgebra $\Phi \leq \S_\bfX$ which is globally $T$-invariant, meaning that
\[A \in \Phi \quad \Longleftrightarrow \quad T^g(A) \in \Phi \quad \forall g \in G.\]
Relatedly, a \textbf{factor map} from one $G$-space $\bfX = (X,\mu,T)$ to another $\bfY = (Y,\nu,S)$ is a measurable map $\pi:X\to Y$ such that $\pi_\ast\mu = \nu$ and $S^g\circ \pi = \pi \circ T^g$ for all $g \in G$, $\mu$-a.e.  In this case, $\pi^{-1}(\S_\bfY)$ is a factor of $\bfX$.  Such a factor map is also referred to as a \textbf{$G$-extension}, and $\bfX$ may be referred to as an \textbf{extension} of $\bfY$.

On the other hand, if $\bfX = (X,\mu,T)$ is a $G$-space and $H \leq G$, then the \textbf{$H$-subaction} of $\bfX$, denoted $\bfX^{\uhr H} = (X,\mu,T^{\uhr H})$, is the $H$-space with probability space $(X,\mu)$ and action given by the transformations $(T^h)_{h \in H}$.  The associated $\s$-algebra of $H$-almost-invariant sets,
\[\{A \in \S_X\,|\ \mu(T^h(A)\triangle A) = 0\ \forall h \in H\},\]
will be denoted by either $\S_\bfX^H$ or $\S_\bfX^{T^{\uhr H}}$, as seems appropriate.

In the sequel, we will often consider a space $(X,\mu)$ endowed with a commuting tuple $T_1$, \ldots, $T_d$ of $G$-actions.  Slightly abusively, we shall simply refer to this as a `$G^d$-action' or `$G^d$-space' (leaving the distinguished $G$-subactions to the reader's understanding), and denote it by $(X,\mu,T_1,\ldots,T_d)$.  Also, if $(X,\mu,T_1,\ldots,T_d)$ is a $G^d$-space and $a,b \in [d]$ with $a \leq b$, then we shall frequently use the notation
\[T_{[a;b]}^g = T_{(a-1;b]}^g = T_{[a,;b+1)}^g := T_a^gT_{a+1}^g \cdots T_b^g \quad \forall g \in G.\]
Because the actions $T_i$ commute, this defines another $G$-action for each $a,b$.

\subsection{Actions of groups and their subgroups}

Our approach to Theorems A and B is descended from the notions of `pleasant' and `isotropized' extensions.  These were introduced in~\cite{Aus--nonconv} and~\cite{Aus--newmultiSzem} respectively, where they were used to give new proofs of the analogs of Theorems A and B for commuting tuples of single transformations.

Subsequently, the more general notion of `sated' extensions was introduced in~\cite{Aus--thesis}.  It simplifies and clarifies those earlier ideas as special cases.  In this paper we shall show how `sated' extensions can be adapted to the non-Abelian setting of Theorems A and B.

An important new difficulty is that we will need to consider certain natural $\s$-subalgebras of a probability-preserving $G$-spaces which need not be factors in case $G$ is not Abelian.  This subsection focuses on a key tool for handling this situation, which seems to be of interest in its own right. Given $H \leq G$, it enables one to turn an extension of an $H$-subaction into an extension of a whole $G$-action.  Satedness will then be introduced in the next subsection.

\begin{thm}\label{thm:recoverG}
Suppose $H \leq G$ is an inclusion of countable groups, that $\bfX = (X,\mu,T)$ is a $G$-space and that
\[\bfY = (Y,\nu,S) \stackrel{\b}{\to} \bfX^{\uhr H}\]
is an extension of $H$-spaces.  Then there is an extension of $G$-spaces $\t{\bfX}\stackrel{\pi}{\to} \bfX$ which admits a commutative diagram of $H$-spaces
\begin{center}
$\phantom{i}$\xymatrix{
\t{\bfX}^{\uhr H} \ar^{\pi}[rr]\ar_\a[dr] && \bfX^{\uhr H}\\
& \bfY\ar_\b[ur].
}
\end{center}
\end{thm}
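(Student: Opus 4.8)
The plan is to build $\t{\bfX}$ as a co-induction-type construction: we want to produce, over each point of $X$, a fibre that records the extra information carried by $\bfY$ along the whole $H$-coset structure of $G$, and then let $G$ act by permuting cosets and acting within a fixed coset system. Concretely, fix a set $R \subseteq G$ of representatives for the right cosets $H\backslash G$ (so $G = \bigsqcup_{r\in R} Hr$), and let $\t{X} := X \times \prod_{r\in R} Y$, with the map $\pi:\t{X}\to X$ the first coordinate projection and $\alpha$ picking out the coordinate indexed by the identity coset. The measure on $\t{X}$ should be the relatively independent self-joining: disintegrate $\nu = \int \nu_x\,\mu(\d x)$ over the factor map $\beta$ (here we use that both spaces are standard Borel), and set $\t\mu := \int_X \delta_x \otimes \bigotimes_{r\in R}\nu_x\,\mu(\d x)$. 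The point is that each $Y$-coordinate is, over $X$, a copy of $\bfY$ relatively independent of the others, so the projection to any single coordinate recovers $\bfY\to\bfX^{\uhr H}$; in particular the $r = e$ coordinate gives the desired triangle.

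The key step is defining the $G$-action $\t T$. For $g \in G$ and $r \in R$, write $rg = h_{r,g}\, r'$ with $h_{r,g}\in H$ and $r' = r\cdot g \in R$ (the induced right action of $G$ on $R$); the cocycle identity $h_{r,gg'} = h_{r,g}\,h_{rg,g'}$ holds because $H$ acts freely on itself on the left. Then define
\[
\big(\t T^g(x,(y_r)_{r\in R})\big) := \big(T^g x,\ (S^{h_{r^{-1}\cdot g^{-1},\,g}}\, y_{r^{-1}\cdot g^{-1}})_{r \in R}\big),
\]
adjusting indices as needed so that $\t T$ is a genuine left action; the verification that $\t T^{g}\t T^{g'} = \t T^{gg'}$ reduces to the cocycle identity above. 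One must check that $\t T$ preserves $\t\mu$: this follows because $T$ preserves $\mu$, each $S^h$ pushes $\nu_x$ to $\nu_{T^h x}$ (a consequence of $\beta$ being an $H$-factor map, after disintegrating), and the product structure of the fibre measures is permuted consistently. Finally, restricting to $H$: for $h \in H$ we have $r^{-1}\cdot h^{-1} = r^{-1}$ for the coset representative $e$ (since $H$ fixes the identity coset), and $h_{e,h} = h$, so on the $r=e$ coordinate $\t T^h$ acts exactly by $S^h$; this gives $\alpha\circ\t T^{h} = S^h\circ\alpha$ and $\pi\circ\alpha = \beta\circ(\text{id})$ — wait, more precisely $\pi = \beta\circ\alpha$ as a map of $H$-spaces $\t{\bfX}^{\uhr H}\to\bfX^{\uhr H}$, which is the required commutativity.

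The main obstacle I expect is bookkeeping with the cocycle $h_{r,g}$ and getting the left/right conventions consistent so that $\t T$ is honestly a left $G$-action on the product indexed by $R$; the natural formulas tend to produce a right action or an action only up to the cocycle, and one must either index by $G/H$ with a left action or carefully invert. A clean way to sidestep this is the abstract co-induction description: $\t{\bfX}$ is the relatively independent product over $\bfX$ of the $G$-space $\mathrm{CoInd}_H^G \bfY$ (coinduced from the $H$-space $\bfY$) with $\bfX$ itself, where the coinduced action is the standard one on $\{f:G\to Y \mid f(hg) = S^h f(g)\ \forall h\in H\}$ with $(g\cdot f)(g'') = f(g''g)$, intersected appropriately with the $X$-fibre data so that the resulting measure is $G$-invariant. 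Phrasing it this way makes $G$-equivariance automatic and isolates the only real content: that the relatively independent joining over $\bfX^{\uhr H}$ of countably many copies of $\bfY$ carries a $\mu$-preserving $G$-action extending everything in sight, which is where standard Borel-ness and the disintegration of $\nu$ over $\beta$ are used. Once $\t{\bfX}\to\bfX$ is in hand, the commutative triangle is immediate from the construction of $\alpha$ and $\pi$ as coordinate/evaluation maps.
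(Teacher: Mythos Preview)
Your overall strategy---a relativized co-induction of $\bfY$ over $\bfX$---is exactly the approach the paper takes. However, there is a genuine error in your definition of the measure $\t\mu$. You set
\[
\t\mu := \int_X \delta_x \otimes \bigotimes_{r\in R}\nu_x\,\mu(\d x),
\]
placing the \emph{same} fibre measure $\nu_x$ on every $Y$-coordinate. This measure is not $\t T$-invariant for $g\notin H$. Indeed, your own observation that $S^h_\ast\nu_x = \nu_{T^h x}$ shows the problem: after applying $\t T^g$, the $r$-th fibre measure becomes $\nu_{T^{h(r,g)}x}$ for some $h(r,g)\in H$, and for invariance you would need this to equal $\nu_{T^g x}$ for every $r$, which fails when $g\notin H$. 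The ``permuted consistently'' clause in your verification hides exactly this mismatch.

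The fix, which is what the paper does, is to let the $r$-th $Y$-coordinate sit over $T^{r^{-1}}x$ rather than over $x$: one should take the fibre measure to be $\bigotimes_{r\in R}\nu_{T^{r^{-1}}x}$ (with the appropriate left/right convention). Then the cocycle relation $rg = h_{r,g}(r\cdot g)$ translates into $T^{h_{r,g}}T^{(r\cdot g)^{-1}}(T^g x) = T^{r^{-1}}x$, which is precisely what makes the shifted base points line up after applying $\t T^g$. Your fallback suggestion of forming a relatively independent product of $\bfX$ with $\mathrm{CoInd}_H^G\bfY$ over $\bfX$ does not work as stated either, since the ordinary co-induced space has no natural $G$-factor map to $\bfX$ (only to $\mathrm{CoInd}_H^G(\bfX^{\uhr H})$); the relativization must be built into the construction from the start, and that is exactly where the coordinate-dependent base point enters.
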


This theorem was proved for Abelian $G$ and $H$ in~\cite[Subsection 3.2]{Aus--lindeppleasant1}.  The non-Abelian case is fairly similar.

\begin{proof}
We shall construct the new $G$-space $\t{\bfX}$ by a kind of `relativized' co-induction of $\bfY$ over $\bfX$, and then show that it has the necessary properties.

The construction of a suitable standard Borel dynamical system $(\t{X},\t{T})$, deferring the construction of the measure, is easy. Let
\[\t{X} := \{(y_g)_g \in Y^G\,|\ y_{gh} = S^{h^{-1}}y_g\ \hbox{and}\ \b(y_g) = T^{g^{-1}}\b(y_e)\ \forall g\in G, h \in H\},\]
and let $\t{T}:G\actson \t{X}$ be the restriction to $\t{X}$ of the left-regular representation:
\[\t{T}^k((y_g)_{g \in G}) = (y_{k^{-1}g})_{g\in G}\]
(it is easily seen that this preserves $\t{X} \subseteq Y^G$).

Also, let
\[\a:\t{X}\to Y: (y_g)_g\mapsto y_e\]
and
\[\pi:= \b\circ \a:\t{X}\to X: (y_g)_g \mapsto \b(y_e).\]

These maps fit into a commutative diagram of the desired shape by construction.  It remains to specify a suitable measure $\t{\mu}$ on $\t{X}$.  It will be constructed as a measure on $Y^G$ for which $\t{\mu}(\t{X}) = 1$.

Let $X\to \Pr\,Y:x\mapsto\nu_x$ be a disintegration of $\nu$ over the map $\b:Y\to X$.  Using this, define new probability measures for each $x \in X$ as follows. First, for each $g \in G$, define $\t{\nu}_{g,x}$ on $Y^{gH}$ by
 \[\t{\nu}_{g,x} := \int_Y \delta_{(S^{h^{-1}}y)_{gh \in gH}}\,\nu_x(\d y).\]
Now let $C \subseteq G$ be a cross-section for the space $G/H$ of left-cosets, identify $Y^G = \prod_{c \in C}Y^{cH}$, and on this product define
\[\t{\nu}_x := \bigotimes_{c \in C}\t{\nu}_{c,T^{c^{-1}}x}.\]
One may easily write down the finite-dimensional marginals of $\t{\nu}_x$ directly.  If $c_1,\ldots,c_m \in C$, and $h_{i,1}$, \ldots, $h_{i,n_i} \in H$ for each $i \leq m$, and also $A_{i,j} \in \S_Y$ for all $i \leq m$ and $j\leq n_i$, then
\begin{eqnarray}\label{eq:margs}
&&\t{\nu}_x\big\{(y_g)_g\,\big|\ y_{c_ih_{i,j}} \in A_{i,j}\ \forall i\leq m,\,j\leq n_i\big\} \nonumber\\
&&= \prod_{i=1}^m\t{\nu}_{c_i,T^{c_i^{-1}}x}\big\{(y_{c_ih})_{h \in H}\,\big|\ y_{c_ih_{i,j}} \in A_{i,j}\ \forall j \leq n_i\big\} \nonumber\\
&&= \prod_{i=1}^m\nu_{T^{c_i^{-1}}x}\big(S^{h_{i,1}}(A_{i,1})\cap \cdots \cap S^{h_{i,n_i}}(A_{i,n_i})\big).
\end{eqnarray}

The following basic properties of $\t{\nu}_x$ are now easily checked:
\begin{itemize}
\item[i)] If $g_1H = g_2H$, say with $g_1 = g_2h_1$, and $x \in X$, then
\begin{eqnarray*}
\t{\nu}_{g_1,T^{g_1^{-1}}x} &=& \int_Y \delta_{(S^{h^{-1}}y)_{g_1h \in g_1H}}\ \nu_{T^{g_1^{-1}}x}(\d y)\\ &=& \int_Y \delta_{(S^{h^{-1}}y)_{g_2h_1h \in g_2H}}\ \nu_{T^{h_1^{-1}}T^{g_2^{-1}}x}(\d y)\\
&=& \int_Y \delta_{(S^{h^{-1}}y)_{g_2h_1h \in g_2H}}\ (S^{h_1^{-1}}_\ast\nu_{T^{g_2^{-1}}x})(\d y)\\
&=& \int_Y \delta_{(S^{h^{-1}}S^{h_1^{-1}}y)_{g_2h_1h \in g_2H}}\ \nu_{T^{g_2^{-1}}x}(\d y)\\
&=& \t{\nu}_{g_2,T^{g_2^{-1}}x}.
\end{eqnarray*}
It follows that $\t{\nu}_x$ does not depend on the choice of cross-section $C$, and the formula~(\ref{eq:margs}) holds with any choice of $C$.
\item[ii)] For each $g \in G$, say $g = ch \in cH$, the marginal of $\t{\nu}_x$ on coordinate $g$ is
\[S^{h^{-1}}_\ast\nu_{T^{c^{-1}}x} = \nu_{T^{h^{-1}}T^{c^{-1}}x} = \nu_{T^{g^{-1}}x}.\]
\item[iii)] If $(y_g)_g$ is sampled at random from $\t{\nu}_x$ and $g \in cH$, then $y_c$ a.s. determines the whole tuple $(y_{ch})_{ch \in cH}$: specifically,
\[y_{ch} = S^{h^{-1}}y_c \quad \hbox{a.s.}\]
\end{itemize}
It also holds that if $g_1$, \ldots, $g_m$ lie in distinct left-cosets of $H$ and $(y_g)_g \sim \t{\nu}_x$, then the coordinates $y_{g_1}$, \ldots, $y_{g_m}$ are independent, but we will not need this fact.

Finally, let
\[\t{\mu} := \int_X \t{\nu}_x\,\mu(\d x).\]

Recalling the definition of $\t{X}$, properties (ii) and (iii) above imply that $\t{\nu}_x(\t{X}) = 1$ for all $x$, and hence also $\t{\mu}(\t{X}) = 1$.

We have seen that the left-regular representation defines an action of $G$ on $\t{X}$, and the required triangular diagram commutes by the definition of $\pi$, so it remains to check the following.
\begin{itemize}
\item (The new $G$-space $(\t{X},\t{\mu},\t{T})$ is probability-preserving.)  Suppose that $k \in G$ and $x \in X$, that $c_1,\ldots,c_m \in C$, that $h_{i,1}$, \ldots, $h_{i,n_i} \in H$ for each $i \leq m$, and that $A_{i,j} \in \S_Y$ for all $i \leq m$ and $j\leq n_i$. Then one has
\begin{eqnarray*}
&&\t{T}^k_\ast\t{\nu}_x\big\{(y_g)_g\,\big|\ y_{c_ih_{i,j}} \in A_{i,j}\ \forall i\leq m,\,j\leq n_i\big\} \\ 
&&=
\t{\nu}_x\big\{\t{T}^{k^{-1}}(y_g)_g\,\big|\ y_{c_ih_{i,j}} \in A_{i,j}\ \forall i\leq m,\,j\leq n_i\big\} \\
&&=
\t{\nu}_x\big\{(y_g)_g\,\big|\ y_{k^{-1}c_ih_{i,j}} \in A_{i,j}\ \forall i\leq m,\,j\leq n_i\big\}
\end{eqnarray*}
Since $C$ is a cross-section of $G/H$, so is $k^{-1}C$.  We may therefore apply~(\ref{eq:margs}) with the cross-section $k^{-1}C$ to deduce that the above is equal to
\[\prod_{i=1}^m\nu_{T^{c_i^{-1}k}x}\big(S^{h_{i,1}}(A_{i,1})\cap \cdots \cap S^{h_{i,n_i}}(A_{i,n_i})\big).\]
On the other hand, equation~(\ref{eq:margs}) applied with the cross-section $C$ gives that this is equal to
\[\t{\nu}_{T^kx}\big\{(y_g)_g\,\big|\ y_{c_ih_{i,j}} \in A_{i,j}\ \forall i\leq m,\,j\leq n_i\big\}.\]
Therefore $\t{T}^k_\ast\t{\nu}_x = \t{\nu}_{T^kx}$, and integrating this over $x$ gives $\t{T}^k_\ast\t{\mu} = \t{\mu}$.

\item (The map $\a$ defines a factor map of $H$-spaces.)  If $h \in H$ and $(y_g)_g \in \t{X}$, then
\[\a(\t{T}^h((y_g)_g)) = \a((y_{h^{-1}g})_g) = y_{h^{-1}} = S^hy_e = S^h\a((y_g)_g),\]
where the penultimate equality is given by property (iii) above.  Also, property (ii) above gives that
\[\a_\ast\t{\mu} = \int_X\a_\ast\t{\nu}_x\,\mu(\d x) = \int_X \nu_x\,\mu(\d x) = \nu.\]

\item (The map $\pi$ defines a factor map of $G$-spaces.)  If $k \in G$ and $(y_g)_g \in \t{X}$, then
\[\pi(\t{T}^k((y_g)_g)) = \b(\a((y_{k^{-1}g})_g)) = \b(y_{k^{-1}}).\]
If $x\in X$ and $(y_g)_g \sim \t{\nu}_x$, then property (ii) above gives that $y_{k^{-1}} \sim \nu_{T^kx}$, and hence $\b(y_{k^{-1}}) = T^kx = T^k\b(y_e)$ a.s.  Since this holds for every $x$, integrating over $x$ gives
\[\pi(\t{T}^k((y_g)_g)) = T^k\pi((y_g)_g) \quad \hbox{a.s.}\]
Also, another appeal to property (ii) above gives
\[\pi_\ast\t{\mu} = \int_X \pi_\ast\t{\nu}_x\,\mu(\d x) = \int_X \b_\ast\nu_x\,\mu(\d x) = \int_X \delta_x\,\mu(\d x) = \mu.\]
\end{itemize}
\end{proof}

\section{Functorial $\s$-subalgebras and subspaces, and satedness}\label{sec:func}

\begin{dfn}[Functorial $\s$-subalgebras and subspaces]
Given $G$, a \textbf{functorial $\s$-subalgebra of $G$-spaces} is a map $\sfF$ which to any $G$-space $\bfX = (X,\mu,T)$ assigns a $\mu$-complete $\s$-subalgebra
\[\S^\sfF_\bfX \subseteq \S_\bfX,\]
and such that for any $G$-extension $\pi:\bfX\to \bfY$ one has
\[\S^\sfF_\bfX \supseteq \pi^{-1}(\S^\sfF_\bfY).\]

Similarly, a \textbf{functorial $L^2$-subspace of $G$-spaces} is a map $\sfV$ which to each $G$-space $\bfX = (X,\mu,T)$ assigns a closed subspace
\[\sfV_\bfX \leq L^2(\mu),\]
and such that for any $G$-extension $\pi:\bfX\to \bfY$ one has
\[\sfV_\bfX \geq \sfV_\bfY\circ \pi := \{f\circ \pi\,|\ f \in \sfV_\bfY\}.\]
In this setting, $P^\sfV_\bfX: L^2(\mu) \to \sfV_\bfX$ will denote the orthogonal projection onto $\sfV_\bfX$.
\end{dfn}

The above behaviour relative to factors is called the \textbf{functoriality} of $\sfF$ or $\sfV$.  Its first consequence is that $\sfF$ and $\sfV$ respect isomorphisms of $G$-spaces: if $\a:\bfX\stackrel{\cong}{\to} \bfY$, then
\[\S^\sfF_\bfX = \a^{-1}(\S^\sfF_\bfY)\]
(where strict equality holds owing to the assumption that these $\s$-algebras are both $\mu$-complete) and
\[\sfV_\bfX = \sfV_\bfY\circ \a.\]

\begin{ex}
If $H\leq G$ is any subgroup, then the map $\bfX \mapsto \S^H_\bfX$ (the $\s$-subalgebra of $H$-almost-invariant sets) defines a functorial $\s$-subalgebra of $G$-spaces.  In case $H \unlhd G$, this actually defines a factor of $\bfX$, but otherwise it may not: in general, one has
\[T^g(\S^H_\bfX) = \S^{gHg^{-1}}_\bfX.\]

This class of examples will provide the building blocks for all of the other functorial $\s$-subglebras that we meet later. \fin
\end{ex}

If $\sfF$ is a functorial $\s$-subalgebra of $G$-spaces, then setting
\[\sfV_\bfX := L^2(\mu|\S^\sfF_\bfX)\]
defines a functorial $L^2$-subspace of $G$-spaces, where this denotes the subspace of $L^2(\mu)$ generated by the $\S^\sfF_\bfX$-measurable functions.  In this case, $P^\sfV_\bfX$ is the operator of conditional expectation onto $\S^\sfF_\bfX$. However, not all functorial $L^2$-subspaces arise in this way.  For instance, given any two functorial $L^2$-subspaces $\sfV_1$, $\sfV_2$ of $G$-spaces, a new functorial $L^2$-subspace may be defined by
\[\sfV_\bfX := \ol{\sfV_{1,\bfX} + \sfV_{2,\bfX}}.\]
If $H_1$, $H_2 \leq G$, then this gives rise to the example
\[\sfV_\bfX := \ol{L^2(\mu|\S^{H_1}_\bfX) + L^2(\mu|\S^{H_2}_\bfX)}.\]
The elements of this subspace generate the functorial $\s$-algebra $\S^{H_1}_\bfX\vee \S^{H_2}_{\bfX}$, but in general one may have
\[\ol{L^2(\mu|\S^{H_1}_\bfX) + L^2(\mu|\S^{H_2}_\bfX)} \lneqq L^2(\mu|\S^{H_1}_\bfX\vee \S^{H_2}_\bfX).\]

In fact, the functorial $L^2$-subspaces that appear later in this work will all correspond to functorial $\s$-subalgebras.  However, the theory of satedness depends only on the subspace structure, so it seems appropriate to develop it in that generality.

To prepare for the next definition, recall that if $\frK_1,\frK_2 \leq \frH$ are two closed subspaces of a real Hilbert space, and $\frL \leq \frK_1 \cap \frK_2$ is a common further closed subspace, then $\frK_1$ and $\frK_2$ are \textbf{relatively orthogonal} over $\frL$ if
\[\langle u,v\rangle = \langle P_{\frL}u,P_{\frL}v\rangle \quad \forall u \in \frK_1,\ v \in\frK_2,\]
where $P_{\frL}$ is the orthogonal projection onto $\frL$.  This requires that in fact $\frL = \frK_1\cap \frK_2$, and is equivalent to asserting that $P_{\frK_2}u = P_\frL u$ for all $u \in \frK_1$, and vice-versa.  Clearly it suffices to verify this for elements drawn from any dense subsets of $\frK_1$ and $\frK_2$.

\begin{dfn}[Satedness]
Let $\sfV$ be a functorial $L^2$-subspace of $G$-spaces.  A $G$-space $\bfX = (X,\mu,T)$ is \textbf{$\sfV$-sated} if the following holds: for any $G$-extension $\bfY = (Y,\nu,S)\stackrel{\xi}{\to} (X,\mu,T)$, the subspaces $L^2(\mu)\circ \xi$ and $\sfV_\bfY$ are relatively orthogonal over their common further subspace $\sfV_\bfX \circ \xi$.

More generally, a $G$-extension $\t{\bfX}\stackrel{\pi}{\to}\bfX$ is \textbf{relatively $\sfV$-sated} if the following holds: for any further $G$-extension $\bfY\stackrel{\xi}{\to}\t{\bfX}$, the subspaces $L^2(\mu)\circ (\pi \circ \xi)$ and $\sfV_\bfY$ are relatively orthogonal over $\sfV_{\t{\bfX}}\circ \pi$.
\end{dfn}

Clearly a $G$-space $\bfX$ is $\sfV$-sated if and only if $\bfX\stackrel{\rm{id}}{\to} \bfX$ is relatively $\sfV$-sated.  In case $\sfV_\bfX = L^2(\mu|\S^\sfF_\bfX)$ for some functorial $\s$-algebra $\sfF$, one may write that a $G$-space or $G$-extension is \textbf{$\sfF$-sated}, rather than $\sfV$-sated.  For a $G$-space $\bfX = (X,\mu,T)$, this asserts that for any $G$-extension $\xi:\bfY = (Y,\nu,S)\to \bfX$, the $\s$-subalgebras
\[\xi^{-1}(\S_X) \quad \hbox{and} \quad \S^\sfF_\bfY\]
are relatively independent over $\xi^{-1}(\S^\sfF_\bfX)$.

The key feature of satedness is that all $G$-spaces have sated extensions.  This generalizes the corresponding result for satedness relative to idempotent classes~(\cite[Theorem 2.3.2]{Aus--thesis}).  The proof here will be a near-verbatim copy of that one, once we have the following auxiliary lemmas.

\begin{lem}\label{lem:bigger-extn-still-rel-sated}
If $\t{\bfX}\stackrel{\pi}{\to} \bfX$ is a relatively $\sfV$-sated $G$-extension, and $\bfZ = (Z,\theta,R)\stackrel{\a}{\to} \t{\bfX}$ is a further $G$-extension, then $\bfZ \stackrel{\a\circ \pi}{\to} \bfX$ is also relatively $\sfV$-sated.
\end{lem}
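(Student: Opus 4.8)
The plan is to chase definitions, unwinding what relative $\sfV$-satedness means for the composite extension $\bfZ \stackrel{\a\circ\pi}{\to} \bfX$ and reducing it to the given satedness of $\t{\bfX}\stackrel{\pi}{\to}\bfX$. First I would fix an arbitrary further $G$-extension $\bfY = (Y,\nu,S)\stackrel{\xi}{\to}\bfZ$; the goal is to show that $L^2(\mu_\bfX)\circ((\a\circ\pi)\circ\xi)$ and $\sfV_\bfY$ are relatively orthogonal over $\sfV_\bfZ \circ(\a\circ\pi)$. The obvious move is to regard $\bfY\stackrel{\a\circ\xi}{\to}\t{\bfX}$ as a further $G$-extension of $\t{\bfX}$ and apply the hypothesis on $\pi$: this gives relative orthogonality of $L^2(\mu_\bfX)\circ(\pi\circ(\a\circ\xi)) = L^2(\mu_\bfX)\circ((\a\circ\pi)\circ\xi)$ and $\sfV_\bfY$ over $\sfV_{\t{\bfX}}\circ\pi$. (Here I am silently using that composition of factor maps is associative, so the two expressions for the pulled-back copy of $L^2(\mu_\bfX)$ agree.)

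So I have relative orthogonality over the smaller subspace $\sfV_{\t{\bfX}}\circ\pi\circ(\a\circ\xi)$, but I want it over $\sfV_\bfZ\circ((\a\circ\pi)\circ\xi)$. By functoriality applied to $\a:\bfZ\to\t{\bfX}$, we have $\sfV_{\t{\bfX}}\circ\a \leq \sfV_\bfZ$, hence $\sfV_{\t{\bfX}}\circ\pi\circ(\a\circ\xi) \leq \sfV_\bfZ\circ((\a\circ\pi)\circ\xi)$ inside $L^2(\nu)$; and by functoriality again $\sfV_\bfZ\circ(\a\circ\pi) \leq \sfV_\bfY$, and of course $\sfV_\bfZ\circ((\a\circ\pi)\circ\xi) \leq L^2(\mu_\bfX)\circ((\a\circ\pi)\circ\xi)$ trivially (it is pulled back from $\bfX$ composed further — wait, more carefully: $\sfV_\bfZ\circ(\a\circ\pi)$ need not lie in $L^2(\mu_\bfX)\circ(\a\circ\pi)$). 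This is the one genuine point to be careful about: I must check that $\sfV_\bfZ\circ((\a\circ\pi)\circ\xi)$ is in fact contained in \emph{both} $L^2(\mu_\bfX)\circ((\a\circ\pi)\circ\xi)$ and $\sfV_\bfY$, so that it is a legitimate candidate for the common subspace. Containment in $\sfV_\bfY$ is functoriality along $\xi\circ$ (pullback) — actually along the extension $\bfY\to\bfZ$ via $\xi$. Hmm, containment in $L^2(\mu_\bfX)\circ((\a\circ\pi)\circ\xi)$ is the subtle one and will need the structural fact that relative $\sfV$-satedness of $\pi$ forces $\sfV_{\t{\bfX}}\subseteq L^2(\mu_\bfX|\,\pi^{-1}\S_\bfX)$-type containments — but $\sfV_\bfZ$ is not controlled this way.

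I therefore expect the argument to instead run as follows: use the elementary lemma that if $\frK_1,\frK_2$ are relatively orthogonal over $\frL$, and $\frL\leq\frL'\leq\frK_1\cap\frK_2$ with $\frL' $ still contained in $\frK_1\cap\frK_2$, then relative orthogonality over $\frL'$ is \emph{equivalent} to $\frL'=\frL$; so the right strategy is not to enlarge $\frL$ but to show directly that $P_{\sfV_\bfY}$ restricted to $L^2(\mu_\bfX)\circ((\a\circ\pi)\circ\xi)$ lands in $\sfV_\bfZ\circ((\a\circ\pi)\circ\xi)$, using the equivalent ``projection'' formulation of relative orthogonality recalled just before the definition of satedness. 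Concretely: for $u \in L^2(\mu_\bfX)\circ((\a\circ\pi)\circ\xi)$, the satedness of $\pi$ gives $P_{\sfV_\bfY}u = P_{\sfV_{\t{\bfX}}\circ\pi\circ\a\circ\xi}\,u$; since $\sfV_{\t{\bfX}}\circ\a \leq \sfV_\bfZ$ (functoriality of $\a$), this projection already lies in $\sfV_\bfZ\circ((\a\circ\pi)\circ\xi)$, and it visibly still lies in $L^2(\mu_\bfX)\circ((\a\circ\pi)\circ\xi)$ because projecting onto a subspace of $\sfV_\bfY$ that is generated over $\t\bfX$ does not leave the $\bfX$-generated space — this last clause needs $\sfV_{\t\bfX}\circ\pi \leq L^2(\mu_\bfX)\circ\pi$, which is exactly what one extra application of $\pi$-satedness (taking $\bfY=\t\bfX$) yields. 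Then $P_{\sfV_\bfY}u \in \sfV_\bfZ\circ((\a\circ\pi)\circ\xi)$ and it equals $P_{\sfV_\bfZ\circ((\a\circ\pi)\circ\xi)}u$ because the latter space sits between the target of the first projection and $\sfV_\bfY$; by the symmetric computation with the roles reversed, this establishes relative orthogonality over $\sfV_\bfZ\circ((\a\circ\pi)\circ\xi)$, as required. The main obstacle is purely bookkeeping: keeping straight the three nested factor maps $\xi,\a,\pi$ and invoking functoriality along the correct arrow each time; there is no analytic difficulty, and the whole proof is a diagram-and-Hilbert-space chase of a few lines.
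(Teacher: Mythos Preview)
Your core line of argument is correct and is essentially what the paper does: apply the relative satedness of $\pi$ to the composed extension $\bfY\stackrel{\a\circ\xi}{\longrightarrow}\t{\bfX}$, observe that the resulting projection $P^\sfV_\bfY u$ of any $u = f\circ\pi\circ\a\circ\xi$ lands in $\sfV_{\t{\bfX}}\circ\a\circ\xi\subseteq \sfV_\bfZ\circ\xi$ by functoriality, and conclude $P^\sfV_\bfY u = P_{\sfV_\bfZ\circ\xi}u$ via the nesting $\sfV_\bfZ\circ\xi\subseteq\sfV_\bfY$. The paper reaches the same endpoint slightly differently, by applying satedness of $\pi$ a \emph{second} time (to the extension $\a:\bfZ\to\t{\bfX}$) to prove the pointwise identity $(P^\sfV_{\t{\bfX}}(f\circ\pi))\circ\a = P^\sfV_\bfZ(f\circ\pi\circ\a)$; your nested-projection shortcut avoids that second appeal.

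However, the side-claim you flag as ``the one genuine point'' is both incorrect and unnecessary. You assert that $\sfV_{\t{\bfX}}\circ\pi \leq L^2(\mu_\bfX)\circ\pi$ follows from ``one extra application of $\pi$-satedness (taking $\bfY = \t{\bfX}$)''. Taking $\bfY=\t{\bfX}$ in the definition of relative satedness is a tautology and says nothing; in general $\sfV_{\t{\bfX}}$ is \emph{not} contained in the pullback $L^2(\mu_\bfX)\circ\pi$, and there is no reason $\sfV_\bfZ\circ\xi$ should sit inside $L^2(\mu_\bfX)\circ(\pi\circ\a\circ\xi)$. The resolution is that you do not need this containment at all: what must be verified is exactly the asymmetric statement $\langle u,g\rangle = \langle P_{\sfV_\bfZ\circ\xi}u,\,g\rangle$ for $u\in L^2(\mu_\bfX)\circ(\pi\circ\a\circ\xi)$ and $g\in\sfV_\bfY$, which you have already established. (Note also a persistent notational slip: $\sfV_\bfZ$ lives on $\bfZ$, so its pullback to $\bfY$ is $\sfV_\bfZ\circ\xi$, not ``$\sfV_\bfZ\circ((\a\circ\pi)\circ\xi)$''.) Drop the digression and the ``symmetric computation'' remark, and what remains is a clean proof.
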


\begin{proof}
Suppose that $\bfY = (Y,\nu,S)\stackrel{\xi}{\to} \bfZ$ is another $G$-extension, and that $f \in L^2(\mu)$ and $g \in \sfV_\bfY$.  Then applying the definition of relative satedness to the composed extension $\bfY\stackrel{\a\circ \xi}{\to} \t{\bfX}$ gives
\[\int_Y(f\circ \pi\circ \a\circ \xi)\cdot g\,\d\nu = \int_Y (P^\sfV_{\t{\bfX}}(f\circ\pi)\circ \a \circ \xi)\cdot g\,\d\nu.\]

This will turn into the required equality of inner products if we show that
\[(P^\sfV_{\t{\bfX}}(f\circ \pi))\circ \a = P^\sfV_\bfZ(f\circ \pi\circ \a).\]

However, in light of the inclusion $\sfV_{\t{\bfX}}\circ \a \subseteq \sfV_\bfZ$ and standard properties of orthogonal projection, this is equivalent to the equality
\[\int_Z(P^\sfV_{\t{\bfX}}(f\circ \pi)\circ \a)\cdot h\,\d\theta = \int_Z(f\circ \pi\circ \a)\cdot h\,\d\theta \quad \forall h \in \sfV_\bfZ,\]
and this is precisely the relative $\sfV$-satedness of $\pi$ applied to $\a$.
\end{proof}

\begin{lem}\label{lem:inv-lim-sated}
If
\[\cdots \stackrel{\pi_2}{\to} \bfX_2 \stackrel{\pi_1}{\to}\bfX_1 \stackrel{\pi_0}{\to} \bfX_0\]
is an inverse sequence of $G$-spaces in which each $\pi_i$ is relatively $\sfV$-sated, and if $\bfX_\infty$, $(\psi_m)_m$ is the inverse limit of this sequence, then $\bfX_\infty$ is $\sfV$-sated.
\end{lem}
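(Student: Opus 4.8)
The plan is to deduce $\sfV$-satedness of the inverse limit $\bfX_\infty$ from the relative $\sfV$-satedness of the coordinate projections $\psi_m:\bfX_\infty\to\bfX_m$, together with a density argument inside $L^2(\mu_\infty)$.

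First I would note that each $\psi_m:\bfX_\infty\to\bfX_m$ is itself a relatively $\sfV$-sated $G$-extension. Indeed $\psi_m = \pi_m\circ\psi_{m+1}$; the bond map $\pi_m:\bfX_{m+1}\to\bfX_m$ is relatively $\sfV$-sated by hypothesis, and $\psi_{m+1}:\bfX_\infty\to\bfX_{m+1}$ is a further $G$-extension of $\bfX_{m+1}$, so Lemma~\ref{lem:bigger-extn-still-rel-sated} gives the claim. The definition of relative $\sfV$-satedness of $\psi_m$, in the form established in the proof of Lemma~\ref{lem:bigger-extn-still-rel-sated}, asserts that for every further $G$-extension $\xi:\bfY = (Y,\nu,S)\to\bfX_\infty$ and every $f\in L^2(\mu_m)$,
\[P^\sfV_\bfY(f\circ\psi_m\circ\xi) = \big(P^\sfV_{\bfX_\infty}(f\circ\psi_m)\big)\circ\xi.\]

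Now fix an arbitrary $G$-extension $\xi:\bfY = (Y,\nu,S)\to\bfX_\infty$; I must show that $L^2(\mu_\infty)\circ\xi$ and $\sfV_\bfY$ are relatively orthogonal over $\sfV_{\bfX_\infty}\circ\xi$. Since $\xi$ is measure-preserving, composition with $\xi$ is an isometry of $L^2(\mu_\infty)$ onto a closed subspace of $L^2(\nu)$ carrying $\sfV_{\bfX_\infty}$ onto $\sfV_{\bfX_\infty}\circ\xi$, so $P_{\sfV_{\bfX_\infty}\circ\xi}(u\circ\xi) = \big(P^\sfV_{\bfX_\infty}u\big)\circ\xi$ for every $u\in L^2(\mu_\infty)$, and the desired relative orthogonality is equivalent to
\[P^\sfV_\bfY(u\circ\xi) = \big(P^\sfV_{\bfX_\infty}u\big)\circ\xi \qquad \text{for all } u\in L^2(\mu_\infty).\]
By construction of the inverse limit, the $\s$-subalgebras $\psi_m^{-1}(\S_{\bfX_m})$ increase, modulo $\mu_\infty$-negligible sets, to $\S_{\bfX_\infty}$, so the martingale convergence theorem shows that $\bigcup_m L^2(\mu_m)\circ\psi_m$ is dense in $L^2(\mu_\infty)$. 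Both sides of the displayed identity depend continuously on $u$, so it suffices to check it when $u = f\circ\psi_m$ for some $m$ and some $f\in L^2(\mu_m)$; in that case the identity is exactly the reformulation of relative $\sfV$-satedness of $\psi_m$ recorded above. This proves the lemma.

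The argument is almost entirely formal. The one point needing a little care is the reduction to the dense subspace $\bigcup_m L^2(\mu_m)\circ\psi_m$ of $L^2(\mu_\infty)$ — justified by the martingale convergence theorem and by the fact (already noted in the paper) that relative orthogonality of two subspaces may be verified on dense subsets of each. The only substantive input is Lemma~\ref{lem:bigger-extn-still-rel-sated}, used in the first step to pass from relative satedness of each bond map $\pi_m$ to that of the telescoped projection $\psi_m$; with that in hand, there is no real obstacle.
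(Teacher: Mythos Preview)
Your proof is correct and follows essentially the same route as the paper: first use Lemma~\ref{lem:bigger-extn-still-rel-sated} with the factorization $\psi_m = \pi_m\circ\psi_{m+1}$ to see that each $\psi_m$ is relatively $\sfV$-sated, then verify relative orthogonality for $f$ in the dense union $\bigcup_m L^2(\mu_m)\circ\psi_m$ and pass to the limit. The paper phrases the relative-orthogonality check via inner products against $g\in\sfV_\bfY$ rather than your projection identity, but these are equivalent formulations of the same step.
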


\begin{proof}
First, all the resulting $G$-extensions $\bfX_\infty \stackrel{\psi_m}{\to} \bfX_m$ are relatively $\sfV$-sated,
because we may factorize $\psi_m = \pi_m\circ\psi_{m+1}$ and then apply Lemma~\ref{lem:bigger-extn-still-rel-sated}.  However, this now implies that for any further $G$-extension $\bfY\stackrel{\xi}{\to} \bfX_\infty$ and for $\pi := \rm{id}_{\t{X}}$, we have
\[\int_Y (f\circ \xi)\cdot g\,\d\nu = \int_Y ((P^\sfV_{\bfX_\infty}f)\circ \xi)\cdot g\,\d\nu \]
for all $g \in \sfV_\bfY$ and all $f \in \bigcup_{m\geq 1}(L^2(\mu_m)\circ \psi_m)$. Since this last union is dense in $L^2(\mu_\infty)$, this completes the proof.
\end{proof}

\begin{thm}\label{thm:sateds-exist}
If $\sfV$ is a functorial $L^2$-subspace of $G$-spaces, then every $G$-space has a $\sfV$-sated extension.
\end{thm}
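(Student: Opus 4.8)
The plan is to mimic the standard ``satedness-by-transfinite-energy-increment'' argument, using the two lemmas just proved as the inductive tools, exactly as in \cite[Theorem 2.3.2]{Aus--thesis}. The guiding heuristic is that relative $\sfV$-satedness of an extension $\pi:\t\bfX\to\bfX$ is a statement about the ``energy'' $\|P^\sfV_{\t\bfX}(f\circ\pi)\|_2$ of images of functions $f$ from $\bfX$; satedness fails precisely when one can pass to a further extension that strictly increases this energy for some $f$. Since the energies are bounded by $\|f\|_2$ and (by functoriality) non-decreasing along extensions, one can drive them up to a supremum and take an inverse limit.

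Concretely, first I would reduce to the case of a single test function, or rather a countable dense family: fix a countable subset $D\subseteq L^2(\mu_0)$ dense in the unit ball (using that $(X_0,\mu_0)$ is standard Borel, so $L^2$ is separable). For a $G$-extension $p:\bfX\to\bfX_0$ and $f\in D$ define the energy $e(p,f) := \|P^\sfV_\bfX(f\circ p)\|_2^2$. By functoriality of $\sfV$, if $q:\bfX'\to\bfX$ is a further extension then $\sfV_\bfX\circ q\subseteq\sfV_{\bfX'}$, so $e(q\circ p,f)\ge e(p,f)$; and always $e(p,f)\le\|f\|_2^2\le1$. Now build an inverse sequence $\cdots\to\bfX_2\to\bfX_1\to\bfX_0$ as follows. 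Having built $\bfX_m$ with map $p_m:\bfX_m\to\bfX_0$, enumerate the pairs $(k,f)\in\bbN\times D$ and, cycling through them in a fair (diagonal) order, at stage $m$ look at the designated pair $(k,f)$; choose a $G$-extension $\bfX_{m+1}\to\bfX_m$ for which $e(p_{m+1},f)$ is within $2^{-m}$ of $\sup\{e(r\circ p_m,f): r:\bfY\to\bfX_m\ \text{a }G\text{-extension}\}$ (such an extension exists by definition of supremum). Passing to the inverse limit $\bfX_\infty$ with maps $\psi_m:\bfX_\infty\to\bfX_m$, functoriality and the bound $e(\cdot,f)\le1$ force, for every $f\in D$, the energies $e(p_m,f)$ to converge to a common value which is \emph{maximal}: no further extension of $\bfX_\infty$ can increase $e(\cdot,f)$ beyond $e(\psi_0,f)=\lim_m e(p_m,f)$, because any such extension also extends some $\bfX_m$ and would have been (nearly) caught by the construction.

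It remains to see that this maximality is exactly relative $\sfV$-satedness of each step $\pi_m:\bfX_{m+1}\to\bfX_m$ — or at least enough of it that Lemmas~\ref{lem:bigger-extn-still-rel-sated} and~\ref{lem:inv-lim-sated} apply. Here is the Hilbert-space point: for a single $f$, the failure of relative orthogonality of $L^2\circ\pi_m$ and $\sfV_{\bfX_{m+1}}$ over $\sfV_{\bfX_m}\circ\pi_m$ means exactly that passing to some $\bfY\to\bfX_{m+1}$ enlarges $P^\sfV(f\circ\,\cdot\,)$ off of $\sfV_{\bfX_m}\circ\pi_m$, i.e.\ strictly increases $\|P^\sfV_\bfY(f\circ p)\|_2^2$; so if the energy is already maximal over all further extensions, relative orthogonality must hold for that $f$. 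Running this for all $f$ in the dense family $D$ and using that relative orthogonality need only be checked on dense subsets (as remarked before the definition of satedness), one concludes that $\bfX_\infty$ is $\sfV$-sated. One then packages the bookkeeping by instead arranging, via Lemmas~\ref{lem:bigger-extn-still-rel-sated} and~\ref{lem:inv-lim-sated}, that each $\pi_m$ is relatively $\sfV$-sated and invoking Lemma~\ref{lem:inv-lim-sated} to conclude; in practice the cleanest route is: at stage $m$ produce, for the designated pair, a single further extension realizing (up to $2^{-m}$) the relevant supremum, check that in the limit the supremum is attained, and deduce relative satedness of the composite $\bfX_\infty\to\bfX_0$.

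The main obstacle is purely organizational rather than mathematical: one must set up the diagonal enumeration so that \emph{every} pair $(k,f)$ is revisited infinitely often and so that the ``approach the supremum'' choices at different stages do not conflict (a later stage for a different pair could in principle have undone the gain for an earlier one — but it cannot, because energies only increase, which is precisely why functoriality of $\sfV$ is the crucial hypothesis). A secondary subtlety is that the supremum defining maximal energy is taken over a proper class of extensions, not a set; this is handled, as in \cite{Aus--thesis}, by noting that it suffices to consider extensions with underlying space of bounded cardinality (e.g.\ built as inverse limits of countably-generated ones), so the supremum is legitimate. Once these set-theoretic points are dispatched, the argument is a near-verbatim transcription of the idempotent-class case.
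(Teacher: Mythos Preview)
Your proposal is correct and follows essentially the same energy-increment strategy as the paper's proof: fix a countable dense family in the unit ball of $L^2(\mu)$, cycle through it so that each function is revisited infinitely often, at each stage choose an extension that (nearly) realizes the supremal energy gain for the designated function, and pass to the inverse limit. The paper organizes this in two explicit steps---first produce a single relatively $\sfV$-sated extension via the energy increment, then iterate and apply Lemma~\ref{lem:inv-lim-sated}---and your worry about proper classes is unnecessary, since one only ever needs to exhibit an extension beating a given real threshold (e.g.\ half the supremal gap), not to realize the supremum exactly.
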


\begin{proof}
Let $\bfX = (X,\mu,T)$ be a $G$-space.

\vspace{7pt}

\emph{Step 1}\quad We first show that $\bfX$ has a relatively $\sfV$-sated extension.  This uses the same `energy increment'
argument as in~\cite{Aus--thesis}.

Let $\{f_r\,|\ r\geq 1\}$ be a countable dense subset of the unit ball of $L^2(\mu)$, and let $(r_i)_{i\geq 1}$ be a member
of $\bbN^\bbN$ in which every non-negative integer appears
infinitely often.

We will now construct an inverse sequence $(\bfX_m)_{m\geq 0}$,
$(\psi^m_k)_{m\geq k \geq 0}$ by the following recursion.  First let $\bfX_0 := \bfX$.  Then, supposing that for some $m_1 \geq 0$ we have already
obtained $(\bfX_m)_{m=0}^{m_1}$, $(\psi^m_k)_{m_1 \geq
m\geq k\geq 0}$, let $\psi^{m_1+1}_{m_1}:\bfX_{m_1+1}\to \bfX_{m_1}$ be an extension such that the difference
\[\|P^\sfV_{\bfX_{m_1+1}}(f_{r_{m_1}}\circ \psi^{m_1+1}_0)\|_2 - \|P^\sfV_{\bfX_{m_1}}(f_{r_{m_1}}\circ \psi^{m_1}_0\,)\|_2\]
is at least half its supremal possible value over all extensions of $\bfX_{m_1}$, where of course we let $\psi^{m_1+1}_0 := \psi^{m_1}_0\circ \psi^{m_1+1}_{m_1}$.

Let $\bfX_\infty$, $(\psi_m)_{m \geq 0}$ be the
inverse limit of this sequence.  We will show that $\bfX_\infty\stackrel{\psi_0}{\to}\bfX$ is relatively $\sfV$-sated. Letting $\pi:\bfY\to\bfX_\infty$
be an arbitrary further extension, this is equivalent to showing that
\[P^\sfV_{\bfY}(f\circ\psi_0\circ \pi) =
P^\sfV_{\bfX_\infty}(f\circ \psi_0)\circ\pi \quad \forall f\in L^2(\mu).\]

It suffices to prove this for every $f_r$ in our previously-chosen dense subset.  Also, since $\sfV_{\bfY} \supseteq
\sfV_{\bfX_\infty}\circ \pi$, the result will follow if we only
show that
\[\|P^\sfV_{\bfY}(f_r\circ\psi_0\circ\pi)\|_2 =
\|P^\sfV_{\bfX_\infty}(f_r\circ\psi_0)\|_2.\]
Suppose, for the sake of contradiction, that the left-hand norm here were strictly larger.  The sequence of norms
\[\|P^\sfV_{\bfX_m}(f_r\circ\psi^m_0)\|_2\]
is non-decreasing as $m\to\infty$, and bounded above by $\|f_r\|_2$.  Therefore it would follow that for some sufficiently large $m$ we would have $r_m = r$ (since
each integer appears infinitely often as some $r_m$) but also
\begin{multline*}
\|P^\sfV_{\bfX_{m+1}}(f_r\circ\psi^{m+1}_0)\|_2
- \|P^\sfV_{\bfX_m}(f_r\circ\psi^m_0)\|_2\\
< \frac{1}{2}\Big(\|P^\sfV_{\bfY}(f_r\circ\psi_0\circ\pi)\|_2 - \|P^\sfV_{\bfX_\infty}(f\circ\psi_0)\|_2\Big)\\
\leq \frac{1}{2}\Big(\|P^\sfV_{\bfY}(f_r\circ\psi_0\circ\pi)\|_2 - \|P^\sfV_{\bfX_m}(f\circ\psi^m_0)\|_2\Big).
\end{multline*}
This would contradict the choice of $\bfX_{m+1}\to
\bfX_m$ in our construction above, so we must actually have the equality of
$L^2$-norms required.

\vspace{7pt}

\emph{Step 2}\quad Iterating the construction of Step 1, we may let
\[\cdots \stackrel{\pi_2}{\to} \bfX_2 \stackrel{\pi_1}{\to} \bfX_1 \stackrel{\pi_0}{\to} \bfX\]
be an inverse seqeuence in which each extension $\pi_i$ is relatively $\sfV$-sated.  Letting $\bfX_\infty$, $(\pi_m)_{m\geq 0}$ be its inverse limit, Lemma~\ref{lem:inv-lim-sated} completes the proof.
\end{proof}

\begin{cor}\label{cor:mult-sateds-exist}
Let $\sfV_1$, $\sfV_2$, \ldots be any countable family of functorial $L^2$-subspaces of $G$-spaces.  Then every $G$-space has an extension which is simultaneously $\sfV_r$-sated for every $r$.
\end{cor}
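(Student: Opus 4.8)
The plan is to reduce the countable family of satedness conditions to a single one, so that Theorem~\ref{thm:sateds-exist} applies directly. The obvious move is to interleave: build an inverse sequence
\[\cdots \to \bfX_3 \to \bfX_2 \to \bfX_1 \to \bfX_0 = \bfX\]
in which the extension $\bfX_{m+1}\to\bfX_m$ is chosen, according to some fixed surjection $\bbN\to\bbN$ hitting every value infinitely often, to be a relatively $\sfV_{r}$-sated extension for the appropriate index $r = r_m$; such an extension exists by Step~1 of the proof of Theorem~\ref{thm:sateds-exist} applied to the functor $\sfV_r$. Let $\bfX_\infty$, $(\psi_m)_m$ be the inverse limit. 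The claim is that $\bfX_\infty$ is $\sfV_r$-sated for every $r$.

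To see this, fix $r$ and consider only the stages $m$ with $r_m = r$; between consecutive such stages the intermediate extensions are harmless because a composite of a relatively $\sfV_r$-sated extension with any further extension is again relatively $\sfV_r$-sated (Lemma~\ref{lem:bigger-extn-still-rel-sated}). Hence each $\psi_m:\bfX_\infty\to\bfX_m$ with $r_m=r$ — indeed, after composing, each $\psi_m$ for \emph{all} $m$ — factors through a relatively $\sfV_r$-sated extension of $\bfX$, and more to the point, writing $\psi_m = (\text{stage-}m\text{-to-}m')\circ\psi_{m'}$ for a later stage $m'$ with $r_{m'}=r$ shows that $\bfX_\infty\to\bfX_m$ is relatively $\sfV_r$-sated for every $m$. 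Now the argument of Lemma~\ref{lem:inv-lim-sated} applies verbatim with $\sfV$ replaced by $\sfV_r$: for any further extension $\bfY\stackrel{\xi}{\to}\bfX_\infty$ and any $f$ in the dense union $\bigcup_m L^2(\mu_m)\circ\psi_m \subseteq L^2(\mu_\infty)$, relative $\sfV_r$-satedness of $\psi_m$ gives the required relative orthogonality of $L^2(\mu_\infty)\circ\xi$ and $\sfV_{r,\bfY}$ over $\sfV_{r,\bfX_\infty}\circ\xi$; density finishes it. Since $r$ was arbitrary, $\bfX_\infty$ is simultaneously $\sfV_r$-sated for all $r$.

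I do not anticipate a genuine obstacle here: the construction is a routine diagonalization, and the two lemmas preceding Theorem~\ref{thm:sateds-exist} were evidently stated in exactly the generality needed to make it go through. The only point requiring a little care is bookkeeping in the interleaving — one must ensure that for each fixed $r$ the sub-sequence of stages assigned to $\sfV_r$ is itself an inverse sequence of relatively $\sfV_r$-sated extensions (true, since each step is either directly $\sfV_r$-sated or absorbed by Lemma~\ref{lem:bigger-extn-still-rel-sated}), so that Lemma~\ref{lem:inv-lim-sated}'s proof transfers without change. Thus the proof is essentially "run the proof of Theorem~\ref{thm:sateds-exist} once, cycling through the $\sfV_r$."
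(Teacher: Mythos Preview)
Your proposal is correct and follows essentially the same interleaving/diagonalization strategy as the paper. The only cosmetic difference is that the paper applies Theorem~\ref{thm:sateds-exist} at each stage to make $\bfX_i$ fully $\sfV_{r_i}$-sated (so that the thinned-out subsequence for a fixed $r$ satisfies the hypothesis of Lemma~\ref{lem:inv-lim-sated} directly), whereas you use only Step~1 of that theorem to obtain relatively $\sfV_{r_m}$-sated extensions and then reprove the conclusion of Lemma~\ref{lem:inv-lim-sated} by hand; both routes rest on the same two lemmas and the same cofinality observation.
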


\begin{proof}
Let $(r_i)_i$ be an element of $\bbN^\bbN$ in which every positive integer appears infinitely often.  By repeatedly implementing Theorem~\ref{thm:sateds-exist}, let
\[\cdots \stackrel{\pi_2}{\to} \bfX_2 \stackrel{\pi_1}{\to} \bfX_1 \stackrel{\pi_0}{\to} \bfX\]
be an inverse sequence in which each $\bfX_i$ is $\sfV_{r_i}$-sated.  Also, let $\pi^n_m := \pi_m\circ \cdots \circ \pi_{n-1}$ whenever $m < n$.  Finally, let $\bfX_\infty$ be the inverse limit of this sequence.  Then for each $r \geq 1$, there is an infinite subsequence $i_1(r) < i_2(r) < \ldots $ in $\bbN$ such that $r_{i_1(r)} = r_{i_2(r)} = \cdots = r$, and $\bfX_\infty$ may be identified with the inverse limit of the thinned-out inverse sequence
\[\cdots \stackrel{\pi^{i_3(r)}_{i_2(r)}}{\to} \bfX_{i_2(r)} \stackrel{\pi^{i_2(r)}_{i_1(r)}}{\to} \bfX_{i_1(r)} \stackrel{\pi^{i_1(r)}_0}{\to} \bfX.\]
Given this, Lemma~\ref{lem:inv-lim-sated} implies that $\bfX_\infty$ is $\sfV_r$-sated.  Since $r$ was arbitrary, this completes the proof.
\end{proof}

\section{Characteristic subspaces and proof of convergence}

\subsection{Subgroups associated to commuting tuples of actions}

We now begin to work with commuting tuples of $G$-actions.  We will need to call on several different subgroups of $G^d$ in the sequel, so the next step is to set up some bespoke notation for handling them.  We will sometimes use a boldface $\bf{g}$ to denote a tuple $(g_i)_{i=1}^d$ in $G^d$, and will denote the identity element of $G$ by $1_G$.

\begin{dfn}
Fix $G$ and $d$, and let $e = \{i_1 < \ldots < i_r\} \subseteq [d]$ with $r\geq 2$ and also $\{i < j\} \subseteq [d]$.  Then we define
\[H_e := \{\bf{g}\in G^d\,|\ g_{i_s+1} = g_{i_s+2} = \ldots = g_{i_{s+1}}\ \hbox{for each}\ s=1,\ldots,r-1\},\]
\[K_{\{i,j\}} := \{\bf{g} \in H_{\{i,j\}}\,|\ g_\ell = 1_G \ \forall \ell \in (i;j]\},\]
and
\[L_e := \{\bf{g} \in H_e\,|\ g_i = 1_G \ \forall i \in [d]\setminus (i_1;i_r]\}.\]
\end{dfn}

Routine calculations give the following basic properties.

\begin{lem}\label{lem:subgp-props}
\begin{enumerate}
\item[(1)] For $e = \{i_1 < \ldots < i_r\}$ as above, the subgroups $L_e$ and $K_{\{i_1,i_r\}}$ commute and generate $H_e$.
\item[(2)] If $a \subseteq e \subseteq [k]$ with $|a| \geq 2$, then $L_a \leq L_e$.

\item[(3)] If $a \subseteq e \subseteq [k]$ with $|a| \geq 2$, and also
\[e \cap [\min a;\max a] = a,\]
then $L_a \unlhd H_e$.  In particular, $L_e \unlhd H_e$. \qed
\end{enumerate}
\end{lem}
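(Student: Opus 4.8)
The three claims are all about the combinatorial structure of the subgroups $H_e$, $K_{\{i,j\}}$, $L_e$ of $G^d$, so the plan is to argue directly from the defining conditions on the coordinates $g_1,\ldots,g_d$. For part (1), I would first observe that membership in $H_e$ imposes equalities $g_{i_s+1}=\cdots=g_{i_{s+1}}$ within each consecutive block $(i_s;i_{s+1}]$ but leaves the coordinates outside $(i_1;i_r]$ completely free. The subgroup $L_e$ captures exactly the part supported on $(i_1;i_r]$ (with those outside coordinates killed), while $K_{\{i_1,i_r\}}$ captures exactly the ``block-constant'' freedom that $H_{\{i_1,i_r\}}$ still allows, with the coordinates in $(i_1;i_r]$ killed. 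I would check that these two subgroups commute simply because for any $\bf{g}\in L_e$ and $\bf{h}\in K_{\{i_1,i_r\}}$, in each coordinate $\ell$ at most one of $g_\ell$, $h_\ell$ differs from $1_G$, so the componentwise products agree in either order; and that they generate $H_e$ by exhibiting, for an arbitrary $\bf{g}\in H_e$, an explicit factorization $\bf{g}=\bf{g}'\bf{g}''$ with $\bf{g}'\in L_e$ recording the coordinates in $(i_1;i_r]$ shifted so that $g'_\ell = g_\ell g_{i_1}^{-1}$ or the like, and $\bf{g}''\in K_{\{i_1,i_r\}}$ recording the constant value on each block together with the free coordinates outside; I would double-check the bookkeeping so that $\bf{g}''$ really is block-constant in the sense $H_{\{i_1,i_r\}}$ requires.

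For part (2), with $a\subseteq e$ and $|a|\geq 2$, I would note that writing $a=\{j_1<\cdots<j_s\}$, any $\bf{g}\in L_a$ has all coordinates outside $(j_1;j_s]$ equal to $1_G$ and is constant on each block $(j_t;j_{t+1}]$ determined by $a$. Since $a\subseteq e$, the blocks cut out by $e$ refine those cut out by $a$, so a function constant on the $a$-blocks is a fortiori constant on the $e$-blocks; and since $(j_1;j_s]\subseteq(i_1;i_r]$ (because $j_1\geq i_1$ and $j_s\leq i_r$), the support condition for $L_e$ is also satisfied. Hence $\bf{g}\in L_e$, giving $L_a\leq L_e$.

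For part (3), the extra hypothesis $e\cap[\min a;\max a]=a$ says that $a$ occupies a consecutive stretch of $e$, with no element of $e$ strictly between $\min a$ and $\max a$ other than the elements of $a$ themselves. I would show $L_a\unlhd H_e$ by checking that conjugation of an element $\bf{x}\in L_a$ by an arbitrary $\bf{g}\in H_e$ stays in $L_a$. Coordinatewise, $(\bf{g}\bf{x}\bf{g}^{-1})_\ell = g_\ell x_\ell g_\ell^{-1}$; outside $(\min a;\max a]$ we have $x_\ell=1_G$ so the conjugate coordinate is $1_G$, and inside, I must verify the conjugate is still constant on each $a$-block. This is where the consecutiveness hypothesis does the work: within a single block $(j_t;j_{t+1}]$ of $a$, because no element of $e$ lies strictly inside it, the $H_e$-constraint forces $\bf{g}$ itself to be constant on that block, say equal to some fixed $\gamma$; then $g_\ell x_\ell g_\ell^{-1}=\gamma x g_\ell^{-1}\cdots$ — more precisely, since $x_\ell$ is the common value $x$ on that block and $g_\ell=\gamma$ there, the conjugate equals $\gamma x\gamma^{-1}$, again constant on the block. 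So the conjugate lies in $L_a$. The special case $L_e\unlhd H_e$ is then immediate by taking $a=e$, for which the hypothesis is vacuous.

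The main obstacle I anticipate is purely notational: keeping the index arithmetic straight across the half-open intervals $(i_s;i_{s+1}]$ and correctly identifying which coordinates are ``frozen to $1_G$'', which are ``free'', and which are ``block-constant'' in each of $H_e$, $K$, and $L_e$, so that the factorization in part (1) is genuinely well-defined and lands in the claimed subgroups. No deep idea is needed — the paper itself flags these as ``routine calculations'' — but the proof should be written carefully enough that the reader can reconstruct the factorization and the normality conjugation check without getting lost in the interval endpoints.
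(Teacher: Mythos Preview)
Your approach is correct and matches what the paper has in mind; the paper itself gives no proof at all (it simply announces ``routine calculations give the following basic properties'' and places a \qed), so your write-up would in fact supply the omitted verification.

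One small point worth tightening in part~(1): the factorization is simpler than you suggest. Since $L_e$ is supported on $(i_1;i_r]$ and $K_{\{i_1,i_r\}}$ is supported on its complement, for $\bf{g}\in H_e$ you may take $g'_\ell=g_\ell$ for $\ell\in(i_1;i_r]$ and $g'_\ell=1_G$ otherwise, and $g''_\ell=g_\ell$ for $\ell\notin(i_1;i_r]$ and $g''_\ell=1_G$ otherwise; no shift by $g_{i_1}^{-1}$ is needed. Your description of $\bf{g}''$ as ``recording the constant value on each block together with the free coordinates outside'' is also slightly off---the constant block values live in $\bf{g}'\in L_e$, while $\bf{g}''\in K_{\{i_1,i_r\}}$ carries only the free coordinates outside $(i_1;i_r]$. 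With that correction the argument goes through cleanly, and parts~(2) and~(3) are exactly right as you have them.
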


Part (3) of this lemma has the following immediate consequence.

\begin{cor}\label{cor:some-invce}
If $a \subseteq e \subseteq [k]$ with $|a| \geq 2$, and also
\[e \cap [\min a;\max a] = a,\]
then $\S^{L_a}_\bfX$ is globally $H_e$-invariant. \qed
\end{cor}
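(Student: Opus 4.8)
The plan is to derive this directly from part (3) of Lemma~\ref{lem:subgp-props} together with the example at the start of Section~\ref{sec:func} describing how the functorial $\s$-subalgebra $\bfX\mapsto\S^H_\bfX$ transforms under the ambient action. Recall from that example that for a $G^d$-space $\bfX$ and a subgroup $H\leq G^d$, conjugating the subgroup conjugates the $\s$-algebra of $H$-almost-invariant sets: for any $\bf{g}\in G^d$ one has $T^{\bf g}(\S^H_\bfX)=\S^{\bf{g}H\bf{g}^{-1}}_\bfX$. Here $T$ denotes the $G^d$-action on $\bfX$ built from the commuting tuple $T_1,\ldots,T_d$, so $T^{\bf g}=T_1^{g_1}\cdots T_d^{g_d}$ for $\bf{g}=(g_i)_i$.

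The key steps are then as follows. First, under the stated hypotheses $a\subseteq e\subseteq[k]$ with $|a|\geq 2$ and $e\cap[\min a;\max a]=a$, Lemma~\ref{lem:subgp-props}(3) gives that $L_a$ is a \emph{normal} subgroup of $H_e$. Second, normality means precisely that $\bf{g}L_a\bf{g}^{-1}=L_a$ for every $\bf{g}\in H_e$. Third, combining this with the conjugation identity above, for every $\bf{g}\in H_e$ we get
\[
T^{\bf g}(\S^{L_a}_\bfX)=\S^{\bf{g}L_a\bf{g}^{-1}}_\bfX=\S^{L_a}_\bfX,
\]
which is exactly the assertion that $\S^{L_a}_\bfX$ is globally $H_e$-invariant (equivalently, that it is a factor of the $H_e$-subaction $\bfX^{\uhr H_e}$). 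Applying this with $a=e$, and noting $L_e\unlhd H_e$ from the same lemma, recovers the stated special case as well.

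There is essentially no obstacle here: the corollary is a one-line consequence once one has Lemma~\ref{lem:subgp-props}(3) and the transformation rule $T^{\bf g}(\S^H_\bfX)=\S^{\bf{g}H\bf{g}^{-1}}_\bfX$. If any care is needed, it is only in checking that the latter identity, stated in the example for a single group $G$ and subgroup $H$, applies verbatim with $G$ replaced by $G^d$ and $H$ by $L_a$—which it does, since the discussion of functorial $\s$-subalgebras and of $H$-almost-invariant sets was carried out for an arbitrary countable group, and $G^d$ is such a group with the distinguished commuting subactions merely providing convenient notation.
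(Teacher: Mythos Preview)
Your proposal is correct and matches the paper's own approach: the corollary is stated with a \qed immediately after Lemma~\ref{lem:subgp-props}, indicating it is an immediate consequence of part (3), and the only other ingredient needed is precisely the conjugation identity $T^{\bf g}(\S^H_\bfX)=\S^{{\bf g}H{\bf g}^{-1}}_\bfX$ from the example in Section~\ref{sec:func}, just as you use it.
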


\subsection{The Host-Kra inequality}

In order to show that a suitably-sated $G$-space has some other desirable property, one must find an extension of it for which the relative independence given by satedness implies that other property.  The key to such a proof is usually constructing the right extension.

Where satedness was used in the previous works~\cite{Aus--nonconv} and~\cite{Aus--newmultiSzem}, that extension could be constructed directly from the Furstenberg self-joining arising from some non-conventional averages.  However, this seems to be more problematic in the present setting, and we will take a different approach.  The construction below is a close analog of the construction by Host and Kra of certain `cubical' extensions of a $\bbZ$-space in~\cite{HosKra05}.  That machinery has also been extended by Host to commuting tuples of $\bbZ$-actions in~\cite{Hos09}.

Fix now a $G^d$-space $\bfX = (X,\mu,T_1,\ldots,T_d)$, and let $\bfY^{(0)} := \bfX$.  Our next step is to construct recursively a height-$(d+1)$ tower of new probability-preserving $G^d$-spaces, which we shall denote by
\begin{eqnarray}\label{eq:tower}
\bfY^{(d)} \stackrel{\xi^{(d)}}{\to} \bfY^{(d-1)} \stackrel{\xi^{(d-1)}}{\to} \cdots \stackrel{\xi^{(2)}}{\to} \bfY^{(1)} \stackrel{\xi^{(1)}}{\to} \bfY^{(0)} = \bfX.
\end{eqnarray}

The construction will also give some other auxiliary $G^d$-spaces $\bfZ^{(j)}$, and they too will be used later.

Supposing the tower has already been constructed up to some level $j \leq d-1$, the next extension is constructed in the following steps.
\begin{itemize}
\item[i)] From $\bfY^{(j)} = (Y^{(j)},\nu^{(j)},S^{(j)})$, define a new $H_{\{d-j-1,d\}}$-action $\t{S}^{(j)}$ on the same space by setting
\begin{eqnarray}\label{eq:comm1}
(\t{S}^{(j)}_i)^g := (S^{(j)}_i)^g \quad \forall g \in G,\ i < d-j-1,
\end{eqnarray}
\begin{eqnarray}\label{eq:comm2}
(\t{S}^{(j)}_{d-j-1})^g := (S^{(j)}_{[d-j-1;d]})^g \quad \forall g \in G,
\end{eqnarray}
and
\begin{eqnarray}\label{eq:comm3}
(\t{S}^{(j)}_{(d-j-1;d]})^g := \rm{id} \quad \forall g \in G
\end{eqnarray}
(with the understanding that~(\ref{eq:comm1}) and~(\ref{eq:comm2}) are vacuous in case $j=d-1$).

\item[ii)] Now consider the $H_{\{d-j-1,d\}}$-space
\begin{eqnarray*}
\bfZ^{(j+1)} &=& (Z^{(j+1)},\theta^{(j+1)},R^{(j+1)})\\
&:=& \big(Y^{(j)}\times Y^{(j)},\ \nu^{(j)}\otimes_{\S^{L_{\{d-j-1,d\}}}_{\bfY^{(j)}}} \nu^{(j)},\ (S^{(j)})^{\uhr H_{\{d-j-1,d\}}}\times \t{S}^{(j)}\big).
\end{eqnarray*}
Let $\xi^{(j+1)}_0,\xi^{(j+1)}_1:Z^{(j+1)}\to Y^{(j)}$ be the two coordinate projections.  They are both factor maps of $H_{\{d-j-1,d\}}$-spaces. Notice that $\theta^{(j+1)}$ is $R^{(j+1)}$-invariant because both of the actions $(S^{(j)})^{\uhr H_{\{d-j-1,d\}}}$ and $\t{S}^{(j)}$ preserve the $\s$-subalgebra $\S^{L_{\{d-j-1,d\}}}_{\bfY^{(j)}}$, by Corollary~\ref{cor:some-invce}.

\item[iii)] Finally, let $\bfY^{(j+1)}\stackrel{\xi^{(j+1)}}{\to} \bfY^{(j)}$ be an extension of $G^d$-spaces for which there is a commutative diagram
\begin{center}
$\phantom{i}$\xymatrix{
(\bfY^{(j+1)})^{\uhr H_{\{d-j-1,d\}}} \ar_{\a^{(j+1)}}[dr]\ar^{\xi^{(j+1)}}[rr] && (\bfY^{(j)})^{\uhr H_{\{d-j-1,d\}}}\\
& \bfZ^{(j+1)}, \ar_{\xi^{(j+1)}_0}[ur] }
\end{center}
as provided by Theorem~\ref{thm:recoverG}.
\end{itemize}

Having made this construction, for each $j\in \{1,2,\ldots,d\}$ we also define a family of maps
\[\pi^{(j)}_\eta:Y^{(j)} \to X\]
indexed by $\eta \in \{0,1\}^j$, by setting
\[\pi^{(j)}_{(\eta_1,\ldots,\eta_j)}:= \xi^{(1)}_{\eta_1}\circ\a^{(1)}\circ \xi^{(2)}_{\eta_2}\circ \a^{(2)} \circ \cdots\circ \xi^{(j)}_{\eta_j}\circ \a^{(j)}.\]
Clearly $(\pi^{(j)}_\eta)_\ast \nu^{(j)} = \mu$ for every $\eta$.   Also,
\[\pi^{(j)}_{0^j} = \xi^{(1)}\circ\cdots\circ \xi^{(j)}:\bfY^{(j)}\to \bfX\]
is a factor map of $G^d$-spaces, where $0^j:= (0,0,\ldots,0) \in \{0,1\}^j$.

\begin{lem}\label{lem:intertwine}
Let $r \in [d]$, let $\eta \in \{0,1\}^r\setminus\{0^r\}$, and let $\ell \in [r]$ be maximal such that $\eta_\ell = 1$.  Then $\pi^{(r)}_\eta$ satisfies the following intertwining relations
\begin{eqnarray}\label{eq:comm4}
\pi^{(r)}_\eta\circ S^{(r)}_i = T_i\circ \pi^{(r)}_\eta \quad \forall i < d-\ell,
\end{eqnarray}
\begin{eqnarray}\label{eq:comm5}
\pi^{(r)}_\eta\circ S^{(r)}_{d-\ell} = T_{[d-\ell;d]}\circ \pi^{(r)}_\eta
\end{eqnarray}
and
\begin{eqnarray}\label{eq:comm6}
\pi^{(r)}_\eta\circ S^{(r)}_{(d-\ell;d]} = \pi^{(r)}_\eta.
\end{eqnarray}
\end{lem}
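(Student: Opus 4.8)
The plan is to prove Lemma~\ref{lem:intertwine} by induction on $r$, following the tower construction~(\ref{eq:tower}) one level at a time and carefully tracking which action of which space intertwines with which action of $X$ along each partial composite $\pi^{(j)}_\eta$. The base step $r = 1$ amounts to reading off the intertwining properties of $\xi^{(1)}_0$, $\xi^{(1)}_1$ and $\a^{(1)}$ directly from step (i)--(iii) of the construction: here $\eta = (1)$, so $\ell = 1$, and we must check $\pi^{(1)}_{(1)}\circ S^{(1)}_i = T_i \circ \pi^{(1)}_{(1)}$ for $i < d-1$, that $\pi^{(1)}_{(1)}\circ S^{(1)}_{d-1} = T_{[d-1;d]}\circ \pi^{(1)}_{(1)}$, and that $\pi^{(1)}_{(1)}\circ S^{(1)}_{(d-1;d]} = \pi^{(1)}_{(1)}$. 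Since $\pi^{(1)}_{(1)} = \xi^{(1)}_1 \circ \a^{(1)}$, and $\a^{(1)}$ is a factor map of $H_{\{d-1,d\}}$-spaces from $(\bfY^{(1)})^{\uhr H_{\{d-1,d\}}}$ to $\bfZ^{(1)}$ while $\xi^{(1)}_1$ intertwines the $H_{\{d-1,d\}}$-action on $\bfZ^{(1)}$ (which on the second coordinate is $\t S^{(0)}$) with $\t S^{(0)}$ on $\bfY^{(0)} = \bfX$, the required relations are exactly the defining equations~(\ref{eq:comm1})--(\ref{eq:comm3}) for $\t S^{(0)}$, rewritten via $\pi^{(1)}_{(1)}$.

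For the inductive step, suppose the relations hold for $\pi^{(r-1)}_{\eta'}$ for all $\eta' \in \{0,1\}^{r-1}$, and take $\eta = (\eta_1,\dots,\eta_r) \in \{0,1\}^r \setminus \{0^r\}$ with $\ell \in [r]$ maximal such that $\eta_\ell = 1$. Write $\eta = (\eta', \eta_r)$ with $\eta' = (\eta_1,\dots,\eta_{r-1})$, so that
\[\pi^{(r)}_\eta = \pi^{(r-1)}_{\eta'} \circ \xi^{(r)}_{\eta_r} \circ \a^{(r)}.\]
There are two cases. If $\eta_r = 0$, then $\ell \leq r-1$ is also maximal in $\eta'$, and $\xi^{(r)}_0$ is a factor map of $G^d$-spaces from $\bfY^{(r)}$ to $\bfY^{(r-1)}$ (recall $\xi^{(r)} = \xi^{(r)}_0 \circ \a^{(r)}$ is the $G^d$-factor map), so $\xi^{(r)}_0 \circ \a^{(r)}$ intertwines every $S^{(r)}_i$ with $S^{(r-1)}_i$; composing with the inductive hypothesis for $\pi^{(r-1)}_{\eta'}$ gives~(\ref{eq:comm4})--(\ref{eq:comm6}) immediately. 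If $\eta_r = 1$, then $\ell = r$, and $\pi^{(r)}_\eta = \pi^{(r-1)}_{0^{r-1}} \circ \xi^{(r)}_1 \circ \a^{(r)} = \xi^{(1)}\circ\cdots\circ\xi^{(r-1)} \circ \xi^{(r)}_1 \circ \a^{(r)}$, i.e.\ we compose the $G^d$-factor map $\pi^{(r-1)}_{0^{r-1}}$ with $\xi^{(r)}_1 \circ \a^{(r)}$. Here $\a^{(r)}$ is a factor map of $H_{\{d-r,d\}}$-spaces onto $\bfZ^{(r)}$, $\xi^{(r)}_1$ is the second coordinate projection intertwining the $H_{\{d-r,d\}}$-action $\t S^{(r-1)}$ on $\bfZ^{(r)}$ with $\t S^{(r-1)}$ on $\bfY^{(r-1)}$, and the definition~(\ref{eq:comm1})--(\ref{eq:comm3}) of $\t S^{(r-1)}$ tells us precisely that $(\t S^{(r-1)}_i)^g = (S^{(r-1)}_i)^g$ for $i < d-r$, that $(\t S^{(r-1)}_{d-r})^g = (S^{(r-1)}_{[d-r;d]})^g$, and that $\t S^{(r-1)}$ acts trivially in coordinates $(d-r;d]$. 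Transporting these through the $G^d$-factor map $\pi^{(r-1)}_{0^{r-1}}$ (which honestly intertwines each $S^{(r-1)}_i$ with $T_i$) yields~(\ref{eq:comm4})--(\ref{eq:comm6}) with $\ell = r$.

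The one genuine subtlety, and the step I expect to require the most care, is bookkeeping the subgroup on which each map $\a^{(j)}$ is merely an $H$-factor map rather than a $G^d$-factor map: $\a^{(j)}$ only intertwines the $H_{\{d-j,d\}}$-subactions, so when $\eta_r = 1$ we may only push forward those transformations $S^{(r)}_i$ that lie in (the image in $\bfY^{(r)}$ of) $H_{\{d-r,d\}}$, which is exactly why the conclusion for $\ell = r$ splits into the three regimes $i < d-r$, $i = d-r$, and $i \in (d-r;d]$ — these track the three defining clauses~(\ref{eq:comm1}),~(\ref{eq:comm2}),~(\ref{eq:comm3}) of $\t S^{(r-1)}$ and no others. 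One must also confirm that in the $\eta_r = 0$ case the maximal-$1$ index $\ell$ of $\eta$ really does coincide with that of $\eta'$ (it does, since $\eta_r = 0$ and $\eta \neq 0^r$ forces $\eta' \neq 0^{r-1}$), so that the inductive hypothesis applies with the same $\ell$. Beyond this indexing care, every individual intertwining identity is a direct unwinding of the construction in steps (i)--(iii) together with the fact that $\xi^{(j)}$ and $\pi^{(j)}_{0^j}$ are factor maps of $G^d$-spaces, so no further analytic input is needed.
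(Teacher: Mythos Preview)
Your inductive scheme is sound in outline, but the $\eta_r = 1$ case contains a genuine error. You write that when $\eta_r = 1$ one has
\[\pi^{(r)}_\eta = \pi^{(r-1)}_{0^{r-1}} \circ \xi^{(r)}_1 \circ \a^{(r)},\]
and then exploit that $\pi^{(r-1)}_{0^{r-1}}$ is a $G^d$-factor map. But $\eta_r = 1$ does \emph{not} force $\eta' = (\eta_1,\ldots,\eta_{r-1})$ to be $0^{r-1}$: for instance $\eta = (1,1)$ has $\eta_r = 1$ yet $\eta' = (1)$. In general $\pi^{(r)}_\eta = \pi^{(r-1)}_{\eta'}\circ\xi^{(r)}_1\circ\a^{(r)}$ with $\eta'$ arbitrary, and $\pi^{(r-1)}_{\eta'}$ is then \emph{not} a $G^d$-factor map, so you cannot simply pass $S^{(r-1)}_{[d-r;d]}$ through it to obtain $T_{[d-r;d]}$.

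The repair is possible but requires real work: when $\eta' \neq 0^{r-1}$ you must invoke the inductive hypothesis for $\pi^{(r-1)}_{\eta'}$ with its own maximal index $\ell' \leq r-1$, and then combine all three relations~(\ref{eq:comm4})--(\ref{eq:comm6}) at level $r-1$ to deduce
\[\pi^{(r-1)}_{\eta'}\circ S^{(r-1)}_{[d-r;d]} = T_{[d-r;d-\ell'-1]}\circ T_{[d-\ell';d]}\circ \pi^{(r-1)}_{\eta'} = T_{[d-r;d]}\circ \pi^{(r-1)}_{\eta'},\]
using that $d-r < d-\ell'$. This is exactly the content of the paper's Corollary~\ref{cor:intertwine}, which the paper derives \emph{after} the lemma; your induction would need it built in. The paper avoids this circularity by instead factoring $\pi^{(r)}_\eta = \pi'\circ\pi''$ at the position $\ell$ (not at position $r$), so that $\pi''$ is genuinely a $G^d$-factor map, and then handling $\pi'$ by a direct layer-by-layer computation that treats both values of $\eta_j$ uniformly for $j < \ell$.
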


\begin{rmk}
There are no such simple relations for the compositions $\pi^{(r)}_\eta\circ S^{(r)}_i$ when $i \geq d-\ell+1$, but we will not need these. \fin
\end{rmk}

\begin{proof}
By the definition of $\ell$, for this $\eta$ we may write $\pi^{(r)}_\eta = \pi'\circ\pi''$, where
\begin{multline}\label{eq:dfn-pieta}
\pi' := \pi^{(\ell)}_{(\eta_1,\ldots,\eta_\ell)} = \xi^{(1)}_{\eta_1}\circ\a^{(1)}\circ \xi^{(2)}_{\eta_2}\circ \a^{(2)}\circ \cdots\circ \xi^{(\ell)}_1\circ \a^{(\ell)}\\ \hbox{and}\quad  \pi'' := \xi^{(\ell+1)}\circ \cdots\circ \xi^{(r)}.
\end{multline}

All three of the desired relations concern the actions of subgroups of $H_{\{d-\ell,d\}}$, and all the maps in the compositions in~(\ref{eq:dfn-pieta}) are factor maps of $H_{\{d-\ell,d\}}$-spaces.  We will read off the desired results from the simpler relations~(\ref{eq:comm1}),~(\ref{eq:comm2}) and~(\ref{eq:comm3}).

In the first place, each $\xi^{(j)}$ appearing in the definition of $\pi''$ actually intertwines the whole $G^d$-actions by construction, so
\[\pi''\circ S_i^{(r)} = S_i^{(\ell)}\circ \pi'' \quad \forall i \in [d].\]

It therefore suffices to prove that $\pi' \circ S^{(\ell)}_i = T_i\circ \pi'$ for all $i < d - \ell$, and similarly for the other two desired relations.

\vspace{7pt}

\emph{Step 1.}\quad Suppose that $i \leq d-\ell$ and $j \leq \ell$. Then the definitions of $\a^{(j)}$, $\xi_0^{(j)}$ and $\xi_1^{(j)}$ give
\[\xi^{(j)}_\eta\circ\a^{(j)}\circ S_i^{(j)} = \xi^{(j)}_\eta\circ R^{(j)}_i \circ \a^{(j)}= \left\{\begin{array}{ll} S^{(j-1)}_i\circ \xi^{(j)}_\eta\circ\a^{(j)} & \quad \hbox{if}\ \eta = 0\\
 \t{S}^{(j-1)}_i \circ \xi^{(j)}_\eta\circ\a^{(j)} & \quad \hbox{if}\ \eta = 1.\end{array}\right.\]

In case $i < d - \ell \leq d - j$, this is equal to $S^{(j)}_i\circ \xi^{(j)}_\eta\circ\a^{(j)}$ for either value of $\eta$, by~(\ref{eq:comm1}).  Applying this repeatedly for $j=\ell,\ell-1,\ldots,1$ in the composition that defines $\pi'$, we obtain
\[\pi'\circ S_i^{(\ell)} = T_i\circ \pi'.\]
As explained above, this proves~(\ref{eq:comm4}).

\vspace{7pt}

\emph{Step 2.}\quad The same calculation as above gives
\[\xi^{(\ell)}_1\circ \a^{(\ell)}\circ S^{(\ell)}_{d-\ell} = \xi^{(\ell)}_1\circ R^{(\ell)}_{d-\ell} \circ \a^{(\ell)}= \t{S}^{(\ell-1)}_{d-\ell}\circ \xi^{(\ell)}_1\circ \a^{(\ell)},\]
and now this is equal to $S^{(\ell-1)}_{[d-\ell,d]}\circ \xi^{(\ell)}_1\circ \a^{(\ell)}$, by~(\ref{eq:comm2}).

On the other hand, if $j \leq \ell-1$, then another call to the definitions of $\a^{(j)}$, $\xi_0^{(j)}$ and $\xi_1^{(j)}$ gives
\[\xi^{(j)}_\eta\circ\a^{(j)}\circ S_{[d-\ell,d]}^{(j)} = \xi^{(j)}_\eta\circ R^{(j)}_{[d-\ell,d]}\circ \a^{(j)} = \left\{\begin{array}{ll} S^{(j-1)}_{[d-\ell;d]}\circ \xi^{(j)}_\eta\circ\a^{(j)} & \quad \hbox{if}\ \eta = 0\\
 \t{S}^{(j-1)}_{[d-\ell;d]}\circ \xi^{(j)}_\eta\circ\a^{(j)} & \quad \hbox{if}\ \eta = 1.\end{array}\right.\]
This time, since $j \leq \ell-1$, one has
\begin{eqnarray*}
\t{S}^{(j-1)}_{[d-\ell;d]} &=& \t{S}^{(j-1)}_{d-\ell}\circ \t{S}^{(j-1)}_{d-\ell+1}\circ \cdots \circ \t{S}^{(j-1)}_{d-j}\circ \t{S}^{(j-1)}_{(d-j;d]}\\
&=& S^{(j-1)}_{d-\ell}\circ S^{(j-1)}_{d-\ell+1}\circ \cdots \circ S^{(j-1)}_{[d-j;d]}\circ \rm{id} = S^{(j-1)}_{[d-\ell;d]},
\end{eqnarray*}
using~(\ref{eq:comm1}) and~(\ref{eq:comm2}).  So we obtain
\[\xi^{(j)}_\eta\circ\a^{(j)}\circ S_{[d-\ell,d]}^{(j)} = S^{(j-1)}_{[d-\ell,d]} \circ \xi^{(j)}_\eta\circ\a^{(j)}\]
for all $j \leq \ell-1$ and either value of $\eta$.

Combining these two calculations gives
\[\pi'\circ S^{(\ell)}_{d-\ell} = (\xi^{(1)}_{\eta_1}\circ \a^{(1)}\circ \cdots \circ \xi^{(\ell-1)}_{\eta_{\ell-1}}\circ \a^{(\ell-1)})\circ S^{(\ell-1)}_{[d-\ell;d]} = T_{[d-\ell;d]}\circ \pi',\]
and hence~(\ref{eq:comm5}).

\vspace{7pt}

\emph{Step 3.}\quad Finally,~(\ref{eq:comm3}) gives
\[\xi^{(\ell)}_1\circ \a^{(1)}\circ S^{(\ell)}_{(d-\ell;d]} = \t{S}^{(\ell-1)}_{(d-\ell;d]}\circ \xi^{(\ell)}_1\circ \a^{(1)} = \xi^{(\ell)}_1\circ \a^{(1)},\]
from which~(\ref{eq:comm6}) follows immediately.
\end{proof}

\begin{cor}\label{cor:intertwine}
If $r \in [d]$, $\eta \in \{0,1\}^r$, and if $j \in [r]$ is such that $\eta_i = 0$ for all $i \geq j+1$, then $\pi^{(r)}_\eta$ satisfies the following intertwining relations
\begin{eqnarray}\label{eq:comm7}
\pi^{(r)}_\eta\circ S^{(r)}_{[d-j;d]} = T_{[d-j;d]}\circ \pi^{(r)}_\eta.
\end{eqnarray}\qed
\end{cor}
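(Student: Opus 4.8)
The plan is to read this off directly from Lemma~\ref{lem:intertwine}, by decomposing the product $S^{(r)}_{[d-j;d]}$ into blocks on each of which one of~(\ref{eq:comm4}),~(\ref{eq:comm5}),~(\ref{eq:comm6}) applies. First I would treat the trivial case $\eta = 0^r$ separately: there $\pi^{(r)}_{0^r} = \xi^{(1)}\circ\cdots\circ\xi^{(r)}$ is a factor map of $G^d$-spaces, so it intertwines $S^{(r)}_i$ with $T_i$ for \emph{every} $i\in[d]$, and composing these relations over $i \in [d-j;d]$ gives~(\ref{eq:comm7}).

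Now suppose $\eta\neq 0^r$, and let $\ell\in[r]$ be maximal with $\eta_\ell=1$, so Lemma~\ref{lem:intertwine} applies with this $\ell$. The hypothesis that $\eta_i=0$ for all $i\geq j+1$ forces $\ell\leq j$, hence $d-\ell\geq d-j$, and since the $S^{(r)}_i$ commute I can factor
\[S^{(r)}_{[d-j;d]} \;=\; S^{(r)}_{[d-j;d-\ell-1]}\circ S^{(r)}_{d-\ell}\circ S^{(r)}_{(d-\ell;d]},\]
the three pieces corresponding to the partition $[d-j;d]=[d-j;d-\ell-1]\sqcup\{d-\ell\}\sqcup(d-\ell;d]$ (the first piece being trivial when $\ell=j$). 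I would then push $\pi^{(r)}_\eta$ past these three pieces from the left: the block $S^{(r)}_{(d-\ell;d]}$ is killed outright by~(\ref{eq:comm6})---it is essential to keep this block whole, since by the Remark following Lemma~\ref{lem:intertwine} no per-generator analogue of~(\ref{eq:comm6}) holds; repeated application of~(\ref{eq:comm4}), legitimate because every $i\in[d-j;d-\ell-1]$ satisfies $i<d-\ell$, turns $S^{(r)}_{[d-j;d-\ell-1]}$ into $T_{[d-j;d-\ell-1]}$; and~(\ref{eq:comm5}) turns the remaining factor $S^{(r)}_{d-\ell}$ into $T_{[d-\ell;d]}$.

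Putting the three steps together and using that $[d-j;d-\ell-1]\sqcup[d-\ell;d]=[d-j;d]$, one obtains
\[\pi^{(r)}_\eta\circ S^{(r)}_{[d-j;d]} \;=\; T_{[d-j;d-\ell-1]}\circ T_{[d-\ell;d]}\circ\pi^{(r)}_\eta \;=\; T_{[d-j;d]}\circ\pi^{(r)}_\eta,\]
which is~(\ref{eq:comm7}). There is no genuine obstacle here: the argument is just the assembly of the three relations of Lemma~\ref{lem:intertwine}, and the only points requiring care are the bookkeeping of the index ranges (in particular the degenerate case $\ell=j$) and the observation that~(\ref{eq:comm6}) must be invoked for the whole block $S^{(r)}_{(d-\ell;d]}$ rather than generator by generator.
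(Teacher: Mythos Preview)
Your argument is correct and is exactly the unpacking the paper has in mind: the paper gives no proof beyond the \qed, treating the corollary as an immediate consequence of Lemma~\ref{lem:intertwine}, and your decomposition of $S^{(r)}_{[d-j;d]}$ into the three blocks governed by~(\ref{eq:comm4}),~(\ref{eq:comm5}),~(\ref{eq:comm6}) (using commutativity to order them conveniently) is precisely how one reads it off.
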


We next prove an estimate relating the multi-linear forms $\L_n$ in~(\ref{eq:Lambda}) to certain integrals over these new $G^d$-spaces $\bfY^{(j)}$.  This will be the key estimate that enables an appeal to satedness. The following theorem relies on an iterated application of the van der Corput estimate, and follows essentially the same lines as Theorem 12.1 in~\cite{HosKra05}.

\begin{thm}\label{thm:ineq}
Let $\bfX = (X,\mu,T_1,\ldots,T_d)$ be a $G^d$-space, let $1 \leq j \leq d$, and let the tower~(\ref{eq:tower}) and the maps $\pi^{(j)}_\eta:Y^{(j)}\to X$ for $\eta\in \{0,1\}^j$ be constructed as above.

For $f_{d-j+1}$, \ldots, $f_d \in L^\infty(\mu)$, let
\[\L_n^{(j)}(f_{d-j+1},\ldots,f_d) := \frac{1}{|F_n|}\sum_{g \in F_n}\prod_{i=d-j+1}^d(f_i\circ T_{[d-j+1;i]}^g).\]
If $f_{d-j+1}$, \ldots, $f_d$ are all uniformly bounded by $1$, then
\[\limsup_{n\to\infty}\|\L_n^{(j)}(f_{d-j+1},\ldots,f_d)\|_2 \leq  \Big(\int_{Y^{(j)}} \prod_{\eta \in \{0,1\}^j} (\cal{C}^{|\eta|}f_d\circ \pi^{(j)}_\eta)\,\d\nu^{(j)}\Big)^{2^{-j}},\]
where $|\eta| := \sum_i\eta_i \mod 1$ and $\cal{C}$ is the operator of complex conjugation.
\end{thm}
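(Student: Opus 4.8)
The proof is an induction on $j$, and each inductive step is powered by one application of the van der Corput estimate, exactly mirroring Theorem 12.1 of \cite{HosKra05}. The statement to be proved for a given $j$ bounds $\limsup_n\|\L_n^{(j)}(f_{d-j+1},\ldots,f_d)\|_2$, a norm of an average indexed by $g\in F_n$ of the product $\prod_{i=d-j+1}^d(f_i\circ T_{[d-j+1;i]}^g)$. The base case $j=1$ reads $\limsup_n\|\L_n^{(1)}(f_d)\|_2 \leq \big(\int_{Y^{(1)}} (f_d\circ\pi^{(1)}_0)\cdot(\ol{f_d}\circ\pi^{(1)}_1)\,\d\nu^{(1)}\big)^{1/2}$, which I would obtain by a single van der Corput step applied to the one-term average $\frac{1}{|F_n|}\sum_{g\in F_n}f_d\circ T_d^g$, followed by recognizing the resulting limiting double average as an integral against the relatively-independent joining defining $\bfZ^{(1)}$ (using that $\theta^{(1)} = \nu^{(0)}\otimes_{\S^{L_{\{d-1,d\}}}_{\bfX}}\nu^{(0)}$ and the intertwining relations for $\pi^{(1)}_0,\pi^{(1)}_1$ from Lemma~\ref{lem:intertwine}).

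**The inductive step.** Assume the bound holds for $j-1$ (with the tower over any $G^d$-space, in particular over $\bfY^{(1)}$ viewed appropriately, or more precisely I would set up the induction so that the bound for $j$ follows from the bound for $j-1$ applied to the space $\bfY^{(1)}$ with the shifted tuple of transformations). Write $\L_n^{(j)}(f_{d-j+1},\ldots,f_d) = \frac{1}{|F_n|}\sum_{g\in F_n} u_g$ where $u_g := \prod_{i=d-j+1}^d(f_i\circ T_{[d-j+1;i]}^g)$. Apply the van der Corput lemma for Følner sequences: $\limsup_n\|\frac{1}{|F_n|}\sum_{g\in F_n}u_g\|_2^2$ is controlled by a double average over $g,h$ of inner products $\langle u_{hg}, u_g\rangle$, averaged suitably. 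In each inner product, the factor $f_{d-j+1}\circ T_{[d-j+1;d-j+1]}^{hg}$ paired against $f_{d-j+1}\circ T_{[d-j+1;d-j+1]}^g$ can be pushed off using $\mu$-invariance of $T_{d-j+1}$, and the remaining product telescopes into a form $\L_n^{(j-1)}$ of a new $(j-1)$-tuple of functions — namely products of the shape $(f_i\circ T_{d-j+1}^{h})\cdot \ol{f_i}$ on the extended space. This is where the off-diagonal action $\t S^{(j)}$ and the joining $\bfZ^{(j)}$ enter: the relative product over $\S^{L_{\{d-j,d\}}}$ is precisely the joining that realizes "$g$-copy tensored with its $h$-shift," and Theorem~\ref{thm:recoverG} provides the $G^d$-extension $\bfY^{(j)}$ on which both copies live compatibly. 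One then applies the inductive hypothesis to this $(j-1)$-fold average on $\bfY^{(j-1)}$ (or $\bfZ^{(j)}$), obtains an integral over $Y^{(j-1)}$ of a product over $\eta'\in\{0,1\}^{j-1}$, and finally integrates over the extra $h$-average, which — after recognizing it as yet another average of the same type and passing to the limit via the joining structure — collapses the $\{0,1\}^{j-1}$-indexed product into a $\{0,1\}^j$-indexed product over $Y^{(j)}$, with the exponent $2^{-j}$ coming from combining the square root from van der Corput with the $2^{-(j-1)}$ from the inductive hypothesis. The complex-conjugation operator $\cal{C}$ on the $\eta_\ell=1$ slots appears because each van der Corput step introduces a conjugate in the shifted copy; the bookkeeping $|\eta| \bmod 1$ tracks the parity of conjugations.

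**Main obstacle.** The genuinely delicate point — and the place where this differs from \cite{HosKra05},\cite{Hos09} — is the bookkeeping needed to verify that after the van der Corput step the inner-product average really is of the form $\L_n^{(j-1)}$ of a legitimate tuple of $L^\infty$ functions \emph{on the right space}, i.e., that the intertwining relations of Lemma~\ref{lem:intertwine} and Corollary~\ref{cor:intertwine} line up the subgroups $H_{\{d-j,d\}}$, $L_{\{d-j,d\}}$, $K_{\{d-j,d\}}$ correctly with the indices $i\in(d-j;d]$, so that the off-diagonal action $\t S^{(j)}$ defined by \eqref{eq:comm1}--\eqref{eq:comm3} is exactly the one produced by the telescoping. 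Because $G$ is non-Abelian, one cannot simply commute the diagonal and off-diagonal actions as in the earlier papers; instead, every manipulation must be phrased in terms of the specific $G$-subactions that Theorem~\ref{thm:recoverG} guarantees can be extended compatibly. I expect this index-chasing, together with a careful statement of the Følner-sequence van der Corput estimate and a clean way to pass the $h$-limit through, to be the bulk of the work; the analytic content (van der Corput plus Cauchy–Schwarz) is standard. I would organize the writeup so that the combinatorial identities among the $\pi^{(r)}_\eta$ are all invoked as black boxes from the already-proved Lemma~\ref{lem:intertwine}, keeping the induction itself short.
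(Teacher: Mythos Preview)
Your overall architecture --- induction on $j$, with van der Corput driving each step and the intertwining relations of Lemma~\ref{lem:intertwine}/Corollary~\ref{cor:intertwine} handling the bookkeeping --- is exactly the paper's approach. But your plan misplaces two pieces in ways that would cause trouble if followed literally.

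First, the base case $j=1$ uses no van der Corput: it is a direct application of the Norm Ergodic Theorem, giving $\L_n^{(1)}(f_d)\to\sfE_\mu(f_d\,|\,\S_\bfX^{T_d})$ in $L^2$, and the squared norm of this limit \emph{equals} the relative-product integral defining $\theta^{(1)}$. Second, and more importantly, the inductive hypothesis is applied \emph{on the original space $X$}, not on $\bfY^{(1)}$ or any extension. After van der Corput (in the two-variable $(h,k)$ form of \cite[Lemma~4.2]{BerMcCZha97}) and a Cauchy--Schwarz step stripping off $f_{d-j+1}$, the remaining inner average is literally $\L_n^{(j-1)}$ applied to the $X$-functions $(f_i\circ T_{[d-j+1;i]}^h)\cdot\ol{(f_i\circ T_{[d-j+1;i]}^k)}$; the hypothesis bounds this by an integral over $Y^{(j-1)}$. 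The joining $\bfZ^{(j)}$ and the off-diagonal action $\t S^{(j)}$ enter only \emph{after} this: Corollary~\ref{cor:intertwine} rewrites the $Y^{(j-1)}$-integrand as $(F\circ(S^{(j-1)}_{[d-j+1;d]})^h)\cdot(\ol F\circ(S^{(j-1)}_{[d-j+1;d]})^k)$ for a single function $F$; then a H\"older step over the $(h,k)$-average (which you omit, but which is needed to pull the exponent $2^{-(j-1)}$ outside the average) and a \emph{second} application of the Norm Ergodic Theorem --- now to $S^{(j-1)}_{[d-j+1;d]}$ acting on $Y^{(j-1)}$ --- convert the $(h,k)$-limit into the integral against $\theta^{(j)}$. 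That is where $\bfZ^{(j)}$ finally appears, not during the telescoping.
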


Note that $\L_n^{(d)} = \L_n$, the averages in~(\ref{eq:Lambda}).  The integral appearing on the right-hand side of the last inequality actually defines a seminorm of the function $f_d$: these are the adaptations of the Host-Kra seminorms to the present setting.  However, our approach will not emphasize the seminorm axioms.

\begin{proof}
This is proved by induction on $j$.

\vspace{7pt}

\emph{Step 1: Base case.}\quad When $j=1$ the Norm Ergodic Theorem for amenable groups gives
\[\L_n^{(1)}(f_d) \to \sfE_\mu(f_d\,|\,\S_\bfX^{T_d}) = \sfE_\mu(f_d\,|\,\S_\bfX^{L_{\{d-1,d\}}}) \quad \hbox{in}\ \|\cdot\|_2,\]
and the square of the norm of this limit is equal to
\begin{multline*}
\int_{X\times X} (f_d\otimes \ol{f_d})\,\d\big(\mu\otimes_{\S_\bfX^{L_{\{d-1,d\}}}}\mu\big) = \int_{Z^{(1)}}f_d\circ \xi_0^{(1)}\cdot\ol{f_d\circ \xi_1^{(1)}}\,\d\theta^{(1)}\\ = \int_{Y^{(1)}}f_d\circ \pi_0^{(1)}\cdot\ol{f_d\circ \pi_1^{(1)}}\,\d\nu^{(1)},
\end{multline*}
by the definition of $Y^{(1)}$ and $\nu^{(1)}$.

\vspace{7pt}

\emph{Step 2: Van der Corput estimate.}\quad Now suppose the result is known up to some $j-1 \in \{1,2,\ldots,d-1\}$.

By the amenable-groups version of the van der Corput estimate~(\cite[Lemma 4.2]{BerMcCZha97}), one has
\begin{multline}\label{eq:vdC}
\limsup_{n\to\infty}\|\L_n^{(j)}(f_{d-j+1},\ldots,f_d)\|^2_2\\
\leq \limsup_{m\to \infty}\frac{1}{|F_m|^2}\sum_{h,k \in F_m}\limsup_{n\to\infty}\Big|\frac{1}{|F_n|}\sum_{g \in F_n}\int_X \prod_{i=d-j+1}^d(f_i\circ T_{[d-j+1;i]}^{hg})\ol{(f_i\circ T_{[d-j+1;i]}^{kg})}\,\d\mu\Big|
\end{multline}
For fixed $h$ and $k$, we may use the $T^g_{d-j+1}$-invariance of $\mu$ to re-arrange the above integral as follows:
\begin{eqnarray*}
&&\Big|\frac{1}{|F_n|}\sum_{g \in F_n}\int_X \prod_{i=d-j+1}^d(f_i\circ T_{[d-j+1;i]}^{hg})\ol{(f_i\circ T_{[d-j+1;i]}^{kg})}\,\d\mu\Big|\\
&&= \Big|\frac{1}{|F_n|}\sum_{g \in F_n}\int_X (f_{d-j+1}\circ T_{d-j+1}^h)\cdot \ol{(f_{d-j+1}\circ T_{d-j+1}^k)}\\
&&\quad\quad\quad\quad\quad\quad\quad\quad \cdot\Big(\prod_{i=d-j+2}^d((f_i\circ T_{[d-j+1;i]}^h)\cdot \ol{(f_i\circ T_{[d-j+1;i]}^k)})\circ T_{(d-j+1;i]}^g\Big)\,\d\mu\Big|.
\end{eqnarray*}
(At this point we have made crucial use of the commutativity of the different actions $T_i$.)
By the Cauchy-Bunyakowski-Schwartz Inequality, this, in turn, is bounded above by
\begin{eqnarray*}
&&\|(f_{d-j+1}\circ T_{d-j+1}^h)\cdot \ol{(f_{d-j+1}\circ T_{d-j+1}^k)}\|_2\\
&&\quad\quad\quad\quad \cdot\Big\|\frac{1}{|F_n|}\sum_{g \in F_n}\prod_{i=d-j+2}^d((f_i\circ T_{[d-j+1;i]}^h)\cdot \ol{(f_i\circ T_{[d-j+1;i]}^k)})\circ T_{(d-j+1;i]}^g\Big\|_2\\
&&\leq \big\|\L_n^{(j-1)}\big((f_{d-j+2}\circ T_{[d-j+1;d-j+2]}^h)\cdot \ol{(f_{d-j+2}\circ T_{[d-j+1;d-j+2]}^k)},\\
&&\quad\quad\quad\quad\quad\quad\quad\quad\quad\quad\quad\quad\quad\quad\quad\quad \ldots,(f_d\circ T_{[d-j+1;d]}^h)\cdot \ol{(f_d\circ T_{[d-j+1;d]}^k)}\big)\big\|_2
\end{eqnarray*}
(since $\|f_{d-j+1}\|_\infty \leq 1$).

\vspace{7pt}

\emph{Step 3: Use of inductive hypothesis.}\quad Combining these inequalities and using the inductive hypothesis, this gives
\begin{multline}\label{eq:use-of-ind-hyp}
\limsup_{n\to\infty}\Big|\frac{1}{|F_n|}\sum_{g \in F_n}\int_X \prod_{i=d-j+1}^d(f_i\circ T_{[d-j+1;i]}^{hg})\ol{(f_i\circ T_{[d-j+1;i]}^{kg})}\,\d\mu\Big|\\
\leq \Big(\int_{Y^{(j-1)}}\prod_{\eta \in \{0,1\}^{j-1}}\big(\cal{C}^{|\eta|}((f_d\circ T_{[d-j+1;d]}^h)\cdot \ol{(f_d\circ T_{[d-j+1;d]}^k)})\circ \pi^{(j-1)}_\eta\big)\,\d\nu^{(j-1)}\Big)^{2^{-(j-1)}}
\end{multline}
for each $h$ and $k$.

To lighten notation, now let
\[F := \prod_{\eta \in \{0,1\}^{j-1}}(\C^{|\eta|}f_d\circ \pi_\eta^{(j-1)}).\]
In terms of this function, Corollary~\ref{cor:intertwine} with $r := j-1$ allows us to write
\begin{multline*}
\prod_{\eta \in \{0,1\}^{j-1}}(\C^{|\eta|}(f_d\circ T^h_{[d-j+1;d]})\circ \pi_\eta^{(j-1)})\\ = \prod_{\eta \in \{0,1\}^{j-1}}(\C^{|\eta|}f_d\circ \pi_\eta^{(j-1)}\circ (S^{(j-1)}_{[d-j+1;d]})^h) = F\circ (S^{(j+1)}_{[d-j+1;d]})^h,
\end{multline*}
and similarly
\[\prod_{\eta \in \{0,1\}^{j-1}}(\C^{|\eta|}\ol{(f_d\circ T^k_{[d-j+1;d]})}\circ \pi_\eta^{(j-1)}) = \ol{F}\circ (S^{(j-1)}_{[d-j+1;d]})^k.\]

\vspace{7pt}

\emph{Step 4: Finish.}\quad Substituting into the right-hand side of~(\ref{eq:use-of-ind-hyp}), one obtains
\begin{multline*}
\limsup_{n\to\infty}\Big|\frac{1}{|F_n|}\sum_{g \in F_n}\int_X \prod_{i=d-j+1}^d(f_i\circ T_{[d-j+1;i]}^{hg})\ol{(f_i\circ T_{[d-j+1;i]}^{kg})}\,\d\mu\Big|\\
\leq \Big(\int_{Y^{(j-1)}}(F\circ (S^{(j+1)}_{[d-j+1;d]})^h)\cdot (\ol{F}\circ (S^{(j-1)}_{[d-j+1;d]})^k)\,\d\nu^{(j-1)}\Big)^{2^{-(j-1)}}.
\end{multline*}
Inserting this back into~(\ref{eq:vdC}) and using H\"older's Inequality for the average over $(h,k)$, one obtains
\begin{eqnarray*}
&&\limsup_{n\to\infty}\|\L_n^{(j)}(f_{d-j+1},\ldots,f_d)\|^2_2\\
&&\leq \limsup_{m\to \infty}\frac{1}{|F_m|^2}\sum_{h,k \in F_m}\Big(\int_{Y^{(j-1)}}(F\circ (S^{(j+1)}_{[d-j+1;d]})^h)\cdot (\ol{F}\circ (S^{(j-1)}_{[d-j+1;d]})^k)\,\d\nu^{(j-1)}\Big)^{2^{-(j-1)}}\\
&&\leq \limsup_{m\to \infty}\Big(\frac{1}{|F_m|^2}\sum_{h,k \in F_m}\int_{Y^{(j-1)}}(F\circ (S^{(j+1)}_{[d-j+1;d]})^h)\cdot (\ol{F}\circ (S^{(j-1)}_{[d-j+1;d]})^k)\,\d\nu^{(j-1)}\Big)^{2^{-(j-1)}}.
\end{eqnarray*}

Finally, the averages on the last line here actually converge as $m\to\infty$, by the Norm Ergodic Theorem for amenable groups, giving
\begin{eqnarray*}
&&\limsup_{n\to\infty}\|\L_n^{(j)}(f_{d-j+1},\ldots,f_d)\|^2_2\\
&&\leq \Big(\int_{Y^{(j-1)}} \sfE_{\nu^{(j-1)}}\big(F\,\big|\,\S_{\bfY^{(j-1)}}^{L_{\{d-j,d\}}}\big)\cdot \sfE_{\nu^{(j-1)}}\big(\ol{F}\,\big|\,\S_{\bfY^{(j-1)}}^{L_{\{d-j,d\}}}\big)\,\d\nu^{(j-1)}\Big)^{2^{-(j-1)}}\\
&&= \Big(\int_{Z^{(j)}} F\circ \xi^{(j)}_0\cdot \ol{F\circ \xi^{(j)}_1}\,\d\theta^{(j)}\Big)^{2^{-(j-1)}}\\
&&= \Big(\int_{Y^{(j)}} \Big(\prod_{\eta \in \{0,1\}^j}\C^{|\eta|}f_d\circ \pi^{(j)}_\eta\Big)\,\d\nu^{(j)}\Big)^{2^{-(j-1)}},
\end{eqnarray*}
where $\bfZ^{(j)}$ is the auxiliary $H_{\{d-j,d\}}$-space constructed along with $\bfY^{(j)}$. Taking square-roots, this continues the induction.
\end{proof}

\subsection{Partially characteristic subspaces and the proof of convergence}\label{subs:part-char}

\begin{dfn}
Consider a probability space $(X,\mu)$, and a sequence $\Xi_n$ of multi-linear forms on $L^\infty(\mu)$ which are separately continuous for the norm $\|\cdot\|_2$ in each entry.  A closed subspace $V \leq L^2(\mu)$ is \textbf{partially characteristic in position $i$} for the sequence $\Xi_n$ if
\[\big\|\Xi_n(f_1,\ldots,f_d) - \Xi_n(f_1,\ldots,f_{i-1},P^Vf_i,f_{i+1},\ldots,f_d)\big\|_2 \to 0\]
as $n \to \infty$ for all $f_1$, \ldots, $f_d \in L^\infty(\mu)$, where $P^V$ is the orthogonal projection onto $V$.
\end{dfn}

The following proposition will quickly lead to a proof of Theorem A.  In fact, it gives rather more than one needs for the proof of Theorem A, but that extra strength will be used during the proof of Theorem B.

\begin{prop}\label{prop:pleasant}
For $1 \leq i \leq j \leq d$, let $\sfF_{i,j}$ be the functorial $\s$-algebra
\[\S^{\sfF_{i,j}}_\bfX := \bigvee_{\ell = 0}^{i-1}\S_\bfX^{T_{(\ell;i]}}\vee \bigvee_{\ell = i+1}^j\S_\bfX^{T_{(i;\ell]}},\]
and let $\sfV_{i,j,\bfX} := L^2(\mu|\S^{\sfF_{i,j}}_\bfX)$ be the associated functorial $L^2$-subspace.  Also, let
\[\hat{\L}_n^{(j)}(f_1,\ldots,f_j) := \frac{1}{|F_n|}\sum_{g \in F_n}\prod_{i=1}^{j}(f_i\circ T_{[1;i]}^g).\]

If $\bfX$ is $\sfV_{i,j}$-sated whenever $1 \leq i\leq j \leq d$, then, for each $j\in [d]$, the subspaces
\[\sfV_{1,j,\bfX},\ \ldots,\ \sfV_{j,j,\bfX}\]
are partially characteristic in positions $1$, \ldots, $j$ for the averages $\hat{\L}_n^{(j)}$.
\end{prop}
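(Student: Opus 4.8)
The plan is to fix $j \in [d]$ and prove, by descending induction on $i$, that $\sfV_{i,j,\bfX}$ is partially characteristic in position $i$ for $\hat{\L}_n^{(j)}$; once this is known for all $i$, the composed projections $f_i \mapsto P^{\sfV_{i,j}}f_i$ may be inserted one at a time without affecting the $\|\cdot\|_2$-limit, since each form is separately $\|\cdot\|_2$-continuous in each entry and the projections are norm-contractions. So the content is a single step: assuming (for a base case) or having already arranged (inductively) that the functions $f_{i+1},\ldots,f_j$ are measurable with respect to their relevant $\s$-algebras, show that replacing $f_i$ by $\sfE_\mu(f_i \mid \S^{\sfF_{i,j}}_\bfX)$ leaves $\lim_n \hat{\L}_n^{(j)}$ unchanged.

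\textbf{Main step via Theorem~\ref{thm:ineq} and satedness.} First I would reduce to position $i = j$ by a change of variables: the average $\hat{\L}_n^{(j)}(f_1,\ldots,f_j)$, after composing everything with $T_{[1;i]}^{g}$ and peeling off the first $i-1$ factors (which are untouched), matches the shape of $\L_n^{(j-i+1)}$ with the roles of the indices shifted, so it suffices to treat the top position. There, by Theorem~\ref{thm:ineq} (applied after a harmless relabelling that identifies $\hat{\L}_n^{(j)}$ restricted to the last variable with $\L_n^{(j)}$), if $f_j$ is uniformly bounded by $1$ and $\sfE_\mu(f_j \mid \S^{\sfF_{j,j}}_\bfX) = 0$, then
\[
\limsup_{n\to\infty}\|\hat{\L}_n^{(j)}(f_1,\ldots,f_{j-1},f_j)\|_2 \leq \Big(\int_{Y^{(j)}}\prod_{\eta\in\{0,1\}^j}(\C^{|\eta|}f_j\circ\pi^{(j)}_\eta)\,\d\nu^{(j)}\Big)^{2^{-j}}.
\]
The point is then to show the right-hand integral vanishes. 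This is where satedness enters: applying the $\sfV_{j,j}$-satedness hypothesis to the $G^d$-extension $\bfY^{(j)} \stackrel{\pi^{(j)}_{0^j}}{\to} \bfX$, together with the intertwining relations of Lemma~\ref{lem:intertwine} and Corollary~\ref{cor:intertwine} which control how each $\pi^{(j)}_\eta$ conjugates the relevant subgroup actions, one shows that $f_j \circ \pi^{(j)}_\eta$ is, up to conditioning, forced to agree with $(\sfE_\mu(f_j \mid \S^{\sfF_{j,j}}_\bfX))\circ \pi^{(j)}_\eta$ inside that integral — so if the conditional expectation is zero, the whole product integrates to zero. Concretely one peels off one coordinate of $\eta$ at a time, using at each stage the relative independence over $\S^{L}_{\bfY^{(j)}}$ built into $Z^{(j)}$ and the fact that the factor $\xi^{(j)}_0$-side carries the $\s$-algebra lifted from $\bfX$, which by satedness is relatively orthogonal to $\sfV_{\bfY^{(j)}}$ over $\sfV_\bfX$ lifted.

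\textbf{Expected obstacle.} The main difficulty is the bookkeeping in the last paragraph: matching the subgroup $L_{\{d-j,d\}}$ (and its relatives appearing at each level of the tower) with the correct piece of the functorial $\s$-algebra $\S^{\sfF_{j,j}}_\bfX$, and verifying that the intertwining relations of Lemma~\ref{lem:intertwine} really do let one rewrite each $f_j\circ\pi^{(j)}_\eta$ as a function pulled back through a map that intertwines the action whose invariant $\s$-algebra appears in $\S^{\sfF_{j,j}}_\bfX$. This requires care because, $G$ being non-Abelian, $\S^{L_a}_\bfX$ need not be $T_j$-invariant for all $j$, so one must check (using Corollary~\ref{cor:some-invce}) that the particular subgroups arising are the ones for which the needed invariance does hold. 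Once the extension $\bfY^{(j)}$ is seen to have the right structure relative to $\bfX$, the vanishing of the integral — and hence the partially-characteristic property — follows by iterating the relative-orthogonality identity $2^j$ times and applying Cauchy–Schwarz at each stage to keep everything bounded; the descending induction on $i$ and a final telescoping of the projection insertions then complete the proof.
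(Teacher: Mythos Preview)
Your treatment of the top position $i=j$ has the right ingredients but misdescribes the mechanism. For $\eta \neq 0^j$, relation~(\ref{eq:comm6}) of Lemma~\ref{lem:intertwine} gives $\pi^{(j)}_\eta\circ S^{(j)}_{(d-\ell;d]}=\pi^{(j)}_\eta$ (with $\ell$ maximal such that $\eta_\ell=1$), so $f_j\circ\pi^{(j)}_\eta$ is \emph{already} $\S^{S^{(j)}_{(d-\ell;d]}}_{\bfY^{(j)}}$-measurable, and hence the product over all nonzero $\eta$ lies in $\S^{\sfF_{j,j}}_{\bfY^{(j)}}$ outright. There is no peeling off of coordinates, no $2^j$-fold iteration, and no Cauchy--Schwarz: a \emph{single} application of $\sfV_{j,j}$-satedness to the lone remaining factor $f_j\circ\pi^{(j)}_{0^j}$ gives $P^{\sfV_{j,j}}_{\bfY^{(j)}}(f_j\circ\pi^{(j)}_{0^j})=(P^{\sfV_{j,j}}_\bfX f_j)\circ\pi^{(j)}_{0^j}$, which vanishes by hypothesis.

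The more serious gap is your reduction of positions $i<j$ to the top position. You cannot ``peel off the first $i-1$ factors'': each $f_k\circ T_{[1;k]}^g$ for $k<i$ depends on $g$, and no relabelling of the commuting actions turns position $i$ into the last slot of an average of the same telescoping form (the structure $T_{[1;1]},T_{[1;2]},\ldots$ is inherently ordered, and Theorem~\ref{thm:ineq} only controls the \emph{last} function). The paper instead runs an induction on $j$. Having handled position $j$, one may assume $f_j$ is $\S^{\sfF_{j,j}}_\bfX$-measurable, approximate it by finite sums of products $h_0h_1\cdots h_{j-1}$ with $h_\ell$ being $\S^{T_{(\ell;j]}}_\bfX$-measurable, and then use those partial invariances to rewrite
\[\hat{\L}_n^{(j)}(f_1,\ldots,f_{j-1},h_0\cdots h_{j-1}) = h_0\cdot \hat{\L}_n^{(j-1)}(f_1h_1,\ldots,f_{j-1}h_{j-1}).\]
The inductive hypothesis for $j-1$ then gives $\sfE_\mu(f_ih_i\mid \S^{\sfF_{i,j-1}}_\bfX)\neq 0$, and since $h_i$ is $\S^{T_{(i;j]}}_\bfX$-measurable this forces $\sfE_\mu(f_i\mid \S^{\sfF_{i,j-1}}_\bfX\vee\S^{T_{(i;j]}}_\bfX)=\sfE_\mu(f_i\mid\S^{\sfF_{i,j}}_\bfX)\neq 0$. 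This product-decomposition step is the missing idea in your outline.
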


Notice that we still have $\hat{\L}_n^{(d)} = \L_n$ (the averages in~(\ref{eq:Lambda})), but otherwise these averages differ from the averages $\L_n^{(j)}$ considered in Theorem~\ref{thm:ineq}.

\begin{proof}
This is proved by induction on $j$.  When $j=1$, hence also $i=1$, we have $\S^{\sfF_{i,j}}_\bfX = \S^{T_1}_\bfX$, and this is always partially characteristic because the Norm Ergodic Theorem gives
\[\hat{\Lambda}^{(1)}_n(f_1) \to \sfE_\mu(f_1\,|\,\S^{T_1}_\bfX) \quad \hbox{in}\ \|\cdot\|_2\]
for any $G$-space.  So now we focus on the recursion clause.  For this it clearly suffices to assume $j=d-1$, and prove the result for the averages $\hat{\Lambda}^{(d)}_n$, which will lighten the notation.

\vspace{7pt}

\emph{Step 1.}\quad We first show that $\sfV_{d,d}$ is partially characteristic in position $d$. Let $f_d \in L^\infty(\mu)$. By decomposing it as
\[P^{\sfV_{d,d}}_\bfX f_d + (f_d - P^{\sfV_{d,d}}_\bfX f_d),\]
and using the multi-linearity of $\hat{\L}^{(d)}_n$, it suffices to show that
\[P^{\sfV_{d,d}}_\bfX f_d = 0 \quad \Longrightarrow \quad \|\hat{\L}^{(d)}_n(f_1,\ldots,f_d)\|_2 \to 0 \quad \forall f_1,\ldots,f_{d-1} \in L^\infty(\mu).\]

We will prove this in its contrapositive, so suppose that
\[\limsup_{n\to\infty}\|\hat{\L}^{(d)}_n(f_1,\ldots,f_d)\|_2 > 0 \quad \hbox{for some}\ f_1,\ldots,f_{d-1}.\]
Then Theorem~\ref{thm:ineq} gives
\[\int_{Y^{(d)}}\Big(\prod_{\eta \in \{0,1\}^d}\C^{|\eta|}f_d\circ \pi_\eta^{(d)}\Big)\,\d\nu^{(d)} \neq 0.\]
However, recalling relation~(\ref{eq:comm6}) from Lemma~\ref{lem:intertwine}, we see that if $\eta \in \{0,1\}^d\setminus \{0^d\}$ and $\ell \in [d]$ is maximal such that $\eta_\ell \neq 0$, then
\[\C^{|\eta|}f_d\circ \pi_\eta^{(d)}\circ S_{(d-\ell;d]}^g = \C^{|\eta|}f_d\circ \pi_\eta^{(d)} \quad \forall g \in G,\]
and so the function
\[\prod_{\eta \in \{0,1\}^d\setminus \{0^d\}}\C^{|\eta|}f_d\circ \pi^{(d)}_\eta\]
is measurable with respect to
\[\bigvee_{\ell=1}^d \S_{\bfY^{(d)}}^{S^{(d)}_{(d-\ell;d]}} = \S_{\bfY^{(d)}}^{\sfF_{d,d}}.\]

Therefore the non-vanishing of the above integral implies that
\[P^{\sfV_{d,d}}_{\bfY^{(d)}} (f_d\circ \pi_{0^d}^{(d)}) \neq 0.\]
Since $\bfX$ is $\sfV_{d,d}$-sated, this implies that also $P^{\sfV_{d,d}}_\bfX f_d \neq 0$, as required. This proves that the required subspace is partially characteristic for $i=j=d$.

\vspace{7pt}

\emph{Step 2.}\quad By Step 1, for any $f_1$, \ldots, $f_d \in L^\infty(\mu)$, we have
\[\hat{\L}_n^{(d)}(f_1,\ldots,f_d) - \hat{\L}_n^{(d)}(f_1,\ldots,f_{d-1},P^{\bfV_{d,d}}_\bfX f_d) \to 0.\]
Also, $P^{\bfV_{d,d}}_\bfX f_d$ still lies in $L^\infty(\mu)$, because $P^{\bfV_{d,d}}_\bfX$ is actually a conditional expectation operator. It therefore suffices to check that the required factors are partially characteristic in the other positions under the additional assumption that $f_d$ is $\S^{\sfF_{d,d}}_\bfX$-measurable.

This assumption implies that $f_d$ may be approximated in $\|\cdot\|_2$ by a finite sum of products of the form
\begin{eqnarray}\label{eq:product-decomp}
h_0\cdot \cdots \cdot h_{d-1}
\end{eqnarray}
where $h_i$ is $\S_\bfX^{T_{(i;d]}}$-measurable for each $i$.  By multi-linearity, it therefore suffices to prove that the required factors are partially characteristic in the other positions when $f_d$ is just one such product function.  However, at this point a simple re-arrangement gives
\begin{eqnarray*}
\hat{\L}_n^{(d)}(f_1,\ldots,f_{d-1},h_0\cdots h_{d-1}) &=& \frac{1}{|F_n|}\sum_{g \in F_n}\Big(\prod_{i=1}^{d-1}(f_i\circ T_{[1;i]}^g)\Big)\cdot ((h_0\cdots h_{d-1})\circ T_{[1;d]}^g)\\
&=& h_0\cdot\frac{1}{|F_n|}\sum_{g \in F_n}\prod_{i=1}^{d-1}((f_ih_i)\circ T_{[1;i]}^g)\\
&=& h_0\cdot \hat{\L}_n^{(d-1)}(f_1h_1,\ldots,f_{d-1}h_{d-1}),
\end{eqnarray*}
using the partial invariances of each of the $h_i$s. Therefore, by the inductive hypothesis for $j = d-1$, if these averages do not vanish as $n \to\infty$ then
\[P^{\sfV_{i,d-1}}_\bfX(f_ih_i) = \sfE_\mu\big(f_ih_i\,\big|\,\S^{\sfF_{i,d-1}}_\bfX\big)\neq 0\]
for each $i \in [d-1]$.  Since $h_i$ is $\S_\bfX^{T_{(i;d]}}$-measurable, this implies that
\[\sfE_\mu\big(f_i\,\big|\,\S^{\sfF_{i,d-1}}_\bfX\vee \S^{T_{(i;d]}}_\bfX\big) = \sfE_\mu(f_i\,|\,\S^{\sfF_{i,d}}_\bfX ) \neq 0.\]
Arguing as at the start of Step 1, this implies that for the averages $\hat{\L}_n^{(d)}$, the subspace $\sfV_{i,d,\bfX}$ is partially characteristic in position $i$ for each $i\leq d-1$. Therefore the induction continues.
\end{proof}

\begin{proof}[Proof of Theorem A]
The proof is by induction on $d$, and uses only the partially characteristic factor in position $d$.  When $d=1$, convergence is given by the Norm Ergodic Theorem for amenable groups, so suppose $d\geq 2$, and that convergence is known for all commuting tuples of fewer than $d$ actions.

By Theorem~\ref{thm:sateds-exist}, we may ascend from $\bfX$ to an extension which is $\sfV_{d,d}$-sated, and so simply assume that $\bfX$ is itself $\sfV_{d,d}$-sated.  This implies that
\[\big\|\L_n(f_1,\ldots,f_d) - \L_n(f_1,\ldots,f_{d-1},P^{\sfV_{d,d}}_\bfX f_d)\big\|_2 \to 0\quad \hbox{as}\ n\to\infty,\]
as in the proof of the previous proposition.  It therefore suffices to prove convergence for the right-hand averages inside these norms.  However, again as in the proof of the previous proposition, for these we may approximate $\sfE_\mu(f_d\,|\,\S_\bfX^{\sfF_{d,d}})$ by a finite sum of products of the form in~(\ref{eq:product-decomp}), and then re-arrange the resulting averages into the form
\[h_0\cdot\frac{1}{|F_n|}\sum_{g \in F_n}\prod_{i=1}^{d-1}((f_ih_i)\circ T_{[1;i]}^g).\]
Ignoring the factor $h_0$, which is uniformly bounded and does not depend on $n$, this is now a system of non-conventional averages for a commuting $(d-1)$-tuple of $G$-actions, so convergence follows by the inductive hypothesis.
\end{proof}

\section{Proof of multiple recurrence}

Given a $G^d$-space, the convergence proved in Theorem A implies that the sequence of measures
\[\l_n := \frac{1}{|F_n|}\sum_{g \in F_n} \delta_{(T_1^gx,T_{[1;2]}^gx,\ldots,T_{[1;d]}^gx)}\]
converges in the usual topology on the convex set of $d$-fold couplings of $\mu$.  (This is the same as the topology on joinings when one gives all spaces the action of the trivial group; the joining topology is explained, for instance, in~\cite[Section 6.1]{Gla03}.)

Letting $\l := \lim_{n\to\infty}\l_n$, it follows that for any measurable $A \subseteq X$ one has
\[\frac{1}{|F_n|}\sum_{g \in F_n}\mu(T_1^gA\cap \cdots \cap T_{[1;d]}^gA) \to \l(A^d).\]
To complete the proof of Theorem B, we will show that
\begin{eqnarray}\label{eq:multirec}
\l(A_1\times \cdots \times A_d) = 0 \quad \Longrightarrow \quad \mu(A_1\cap \cdots \cap A_d) = 0
\end{eqnarray}
for any $G^d$-space $\bfX = (X,\mu,T)$ and measurable subsets $A_1$, \ldots, $A_d \subseteq X$.  This gives the desired conclusion by setting $A_1 := \ldots := A_d := A$.

First, by replacing $\bfX$ with a suitable extension and lifting each $A_i$ to that extension, we may reduce this task to the case in which $\bfX$ is sated with respect to any chosen family of functional $\s$-subalgebras.  After doing this, the result will be proved by making contact with a modification of Tao's Infinitary Removal Lemma from~\cite{Tao06--hyperreg}.  This is the same strategy as in~\cite{Aus--newmultiSzem}. The modification of the Removal Lemma is essentially as in~\cite{Aus--newmultiSzem}, but we shall use its more explicit formulation from~\cite{Aus--thesis}:

\begin{prop}[{\cite[Proposition 4.3.1]{Aus--thesis}}]\label{prop:infremove}
Let $(X,\mu)$ be a standard Borel probability space with $\s$-algebra $\S$.  Let $\pi_i:X^d \to X$ be the coordinate projection for each $i\leq d$.   Let $\theta$ be a $d$-fold coupling of $\mu$ on $X^d$.  Finally, suppose that $(\Psi_e)_e$ is a collection of $\s$-subalgebras of $\S$, indexed by $e \in \binom{[d]}{\geq 2}$, with the following properties:
\begin{itemize}
\item[i)] if $a \subseteq e$ then $\Psi_a \supseteq \Psi_e$;
\item[ii)] if $i,j \in e$ and $A \in \Psi_e$ then $\theta(\pi_i^{-1}(A)\triangle \pi_j^{-1}(A)) = 0$, so that we may let $\hat{\Psi}_e$ denote the common $\theta$-completion of the lifted $\s$-algebras $\pi_i^{-1}(\Psi_e)$ for $i \in e$;
\item[iii)] if we define $\hat{\Psi}_\I := \bigvee_{e \in \I}\hat{\Psi}_e$ for each up-set $\I \subseteq \binom{[d]}{\geq 2}$, then the $\s$-algebras $\hat{\Psi}_\I$ and $\hat{\Psi}_{\J}$ are relatively independent under $\theta$ over $\hat{\Psi}_{\I\cap \J}$.
\end{itemize}
In addition, suppose that $\I_{i,j}$ for $i=1,2,\ldots,d$ and $j = 1,2,\ldots,k_i$ are up-sets in $\binom{[d]}{\geq 2}$ such that $[d] \in \I_{i,j}\subseteq \langle i\rangle$ for each $i,j$, and that $A_{i,j} \in \bigvee_{e \in \I_{i,j}}\Psi_e$ for each $i,j$.  Then
\[\theta\Big(\prod_{i=1}^d\Big(\bigcap_{j=1}^{k_i}A_{i,j}\Big)\Big) = 0 \quad \Longrightarrow \quad \mu\Big(\bigcap_{i=1}^d\bigcap_{j=1}^{k_i}A_{i,j}\Big) = 0.\]
\end{prop}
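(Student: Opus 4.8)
The plan is to reduce the assertion to a single identity evaluating $\theta$ of the relevant product set in terms of conditional expectations on $(X,\mu)$, and then to close with an elementary remark. Two routine reductions come first. Since $\bigcup_j\I_{i,j}$ is again an up-set contained in $\langle i\rangle$ and containing $[d]$, and $\bigvee_{e\in\bigcup_j\I_{i,j}}\Psi_e$ is closed under finite intersection, I would replace $(A_{i,j})_j$ by the single set $A_i:=\bigcap_{j=1}^{k_i}A_{i,j}$, a member of $\bigvee_{e\in\I_i}\Psi_e$ with $\I_i:=\bigcup_j\I_{i,j}$, so that we may assume $k_i=1$ for all $i$. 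Then, since $\I_i\subseteq\langle i\rangle$ forces $i\in e$ for every $e\in\I_i$, property~(ii) shows that $\tilde{A}_i:=\pi_i^{-1}(A_i)$ is, modulo $\theta$-null sets, measurable with respect to $\hat{\Psi}_{\I_i}$, and the hypothesis becomes $\theta(\tilde{A}_1\cap\cdots\cap\tilde{A}_d)=0$.

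The main step would be to compute $\theta(\tilde{A}_1\cap\cdots\cap\tilde{A}_d)$ by processing the indicators $1_{\tilde{A}_i}$ one at a time. When $\I_i$ has several minimal elements one first writes $1_{A_i}$ as an $L^1(\mu)$-limit of finite sums of products of $\Psi_e$-measurable functions indexed by the minimal antichain of $\I_i$, and uses multilinearity and dominated convergence to reduce to the case in which each such factor has been relocated, via property~(ii), onto some coordinate lying in its index set $e$ (property~(i) being used freely here, e.g.\ to regard a $\Psi_e$-measurable function as $\Psi_{e'}$-measurable for $e'\subseteq e$). After this relocation each lifted piece is measurable with respect to a single $\hat{\Psi}_e$, and the remaining integral is evaluated by repeated conditioning: at each stage one conditions one piece, or a partial product of pieces already assembled on a coordinate, onto the join of the $\hat{\Psi}_e$ carrying the pieces not yet processed. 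Because that join is always $\hat{\Psi}_{\J}$ for an up-set $\J$, property~(iii) collapses the conditioning algebra to $\hat{\Psi}_{\I_i\cap\J}$; iterating, and using property~(ii) with $e=[d]$ to identify $\hat{\Psi}_{[d]}$ with $\pi_1^{-1}(\Phi)$, where $\Phi$ is the $\mu$-completion of $\Psi_{[d]}$, one is left with an identity of the shape
\[\theta\big(\tilde{A}_1\cap\cdots\cap\tilde{A}_d\big)=\int_X\ \prod_{\gamma\in\mathcal{G}}\sfE_\mu\Big(\prod_{i\in\gamma}1_{A_i}\ \Big|\ \Phi\Big)\,\d\mu,\]
where $\mathcal{G}$ is the partition of $[d]$ whose blocks are the classes of indices that the relocation step fuses onto a common coordinate — a partition determined entirely by the minimal antichains of the $\I_i$.

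Granting this identity, the conclusion is immediate. The integrand is a product of non-negative functions; if the integral vanishes it is zero $\mu$-a.e., so $\mu$-almost every $x$ satisfies $\sfE_\mu\big(\prod_{i\in\gamma}1_{A_i}\,\big|\,\Phi\big)(x)=0$ for at least one block $\gamma$. On the other hand, for each block $\gamma$ the set $\bigcap_{i\in\gamma}A_i$ lies, modulo $\mu$-null sets, inside $\{\sfE_\mu(\prod_{i\in\gamma}1_{A_i}\,|\,\Phi)>0\}$, since $\int_{\{\sfE_\mu(\,\cdot\,|\,\Phi)=0\}}\prod_{i\in\gamma}1_{A_i}\,\d\mu=\int_{\{\sfE_\mu(\,\cdot\,|\,\Phi)=0\}}\sfE_\mu(\prod_{i\in\gamma}1_{A_i}\,|\,\Phi)\,\d\mu=0$. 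Combining these two facts, $\bigcap_{i=1}^dA_i$ lies modulo $\mu$-null sets in $\bigcap_{\gamma\in\mathcal{G}}\{\sfE_\mu(\prod_{i\in\gamma}1_{A_i}\,|\,\Phi)>0\}$, the complement of a co-null set, hence itself $\mu$-null; therefore $\mu(\bigcap_{i=1}^dA_i)=0$, which is the desired conclusion after the first reduction.

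The principal obstacle is the computation in the second paragraph: one must orchestrate the relocation-and-conditioning so that at every stage the conditioning $\sigma$-algebra is genuinely of the form $\hat{\Psi}_{\J}$ for an up-set $\J$ — which pins down both the order in which coordinates and pieces are processed and the way products are regrouped after relocation — and then check that the bookkeeping really yields the partition $\mathcal{G}$ and the displayed identity. This is precisely Tao's infinitary hypergraph removal lemma~\cite{Tao06--hyperreg} transcribed into the present notation, essentially as in~\cite{Aus--newmultiSzem}; the verification is conceptually transparent but lengthy, which is why we quote it.
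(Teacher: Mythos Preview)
The paper does not prove this proposition; it quotes it from~\cite{Aus--thesis} and remarks only that the argument there is ``a rather lengthy induction on the up-sets $\I_{i,j}$, which requires the full generality above.'' So the relevant comparison is between your sketch and that induction.

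Your opening reduction to $k_i=1$ is harmless for the \emph{statement}, but it discards precisely the flexibility that the cited proof relies on. In that proof one inducts on a measure of complexity of the collection $(\I_{i,j})_{i,j}$; the inductive step takes a single $A_{i,j}$ whose up-set $\I_{i,j}$ is not already $\{[d]\}$, approximates $1_{A_{i,j}}$ by finite sums of products of functions each measurable with respect to a \emph{strictly smaller} up-set, and then distributes these factors --- thereby increasing $k_i$ while lowering the complexity. The general $k_i$ is thus not a convenience but the load-bearing part of the induction; reducing to $k_i=1$ at the outset prevents that induction from running.

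Separately, the displayed identity
\[
\theta\big(\tilde A_1\cap\cdots\cap\tilde A_d\big)=\int_X\ \prod_{\gamma\in\mathcal G}\sfE_\mu\Big(\prod_{i\in\gamma}1_{A_i}\ \Big|\ \Phi\Big)\,\d\mu
\]
does not emerge from the procedure you describe. After approximating each $1_{A_i}$ by finite sums of products over the minimal antichain of $\I_i$, the objects in play are the \emph{pieces} of those decompositions, not the original $1_{A_i}$; and different summands in the multilinear expansion will, after relocation, yield different fusion patterns, so there is no single partition $\mathcal G$ ``determined entirely by the minimal antichains of the $\I_i$.'' What one can obtain along your lines is only an approximate equality for each fixed product term, with the right-hand side involving conditional expectations of those pieces rather than of $\prod_{i\in\gamma}1_{A_i}$; from that one cannot read off the clean positivity argument of your third paragraph. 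The actual proof avoids this by never attempting a closed-form evaluation: it keeps the hypothesis $\theta(\cdots)=0$ intact and propagates it down the induction until every $\I_{i,j}=\{[d]\}$, at which point all the lifted sets coincide $\theta$-a.s.\ and the conclusion is immediate.
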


This result is proved by a rather lengthy induction on the up-sets $\I_{i,j}$, which requires the full generality above: see~\cite[Subsection 4.3]{Aus--thesis} for a proof and additional discussion.  However, as in~\cite{Aus--newmultiSzem}, we will apply it only for $k_i = 1$ and $\I_{i,1} = \langle i \rangle$ for each $i$, in which case it asserts that, if $A_i \in \Psi_{\langle i\rangle}$ for each $i$, then
\begin{eqnarray}\label{eq:spec-case}
\theta(A_1\times\cdots \times A_d) = 0 \quad \Longrightarrow \quad \mu(A_1\cap \cdots \cap A_d) = 0.
\end{eqnarray}

We will apply Proposition~\ref{prop:infremove} with $\theta$ equal to the limit coupling $\l$, and with the following family of $\s$-subalgebras.  Suppose that $e = \{i_1 < \ldots < i_\ell\}\subseteq [d]$ is non-empty, and define a new functorial $\s$-subalgebra of $G^d$-spaces $\bfX$ by
\begin{multline*}
\Phi^e_\bfX := \S_\bfX^{L_e} = \big\{A \in \S_\bfX\,\big|\ \mu(T^g_{(i_1;i_2]}A \triangle A) = \mu(T^g_{(i_2;i_3]}A\triangle A) =\\ \ldots = \mu(T^g_{(i_{\ell-1};i_\ell]}A\triangle A) = 0\ \forall g\in G\big\},
\end{multline*}
where we interpret this as $\S_\bfX$ in case $|e| = 1$.  Observe that if $a \subseteq e$, then $L_a \leq L_e$, and so $\Phi^a_\bfX \supseteq \Phi^e_\bfX$.  For any up-set $\I \subseteq \binom{[d]}{\geq 2}$, let
\[\Phi^\I_\bfX := \bigvee_{e \in \I}\Phi^e_\bfX.\]

Most of our remaining work will go into checking properties (i)--(iii) above for the joint distribution of the lifted $\s$-algebras $\pi_i^{-1}(\Phi_\bfX^e)$, subject to a certain satedness assumption on $\bfX$.

\begin{dfn}
The $G^d$-space $\bfX = (X,\mu,T)$ is \textbf{fully sated} if it is sated for every functorial $\s$-subalgebra of the form
\[\bigvee_{s=1}^r\Phi^{e_s}_\bfX\]
for some $e_1$, \ldots, $e_r \in \binom{[d]}{\geq 2}$.
\end{dfn}

\begin{thm}\label{thm:struct-of-coupling}
Let $\bfX$ be fully sated, and let $\l$ be its limit coupling as above.
\begin{enumerate}
\item[(1)] The coordinates factors $\pi_i:X^d\to X$ are relatively independent under $\l$ over the further $\s$-subalgebras $\pi_i^{-1}(\Phi^{\langle i\rangle}_\bfX)$, $i=1,2,\ldots,d$.
\item[(2)] The collection $(\Phi^e_\bfX)_e$ satisfies properties (i)--(iii) of Proposition~\ref{prop:infremove}.
\end{enumerate}
\end{thm}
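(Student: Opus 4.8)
The plan is to prove the two parts in sequence, with part (1) emerging as a special case of the analysis needed for part (2), and with the Host--Kra tower from Section~\ref{subs:part-char} together with the Host--Kra inequality (Theorem~\ref{thm:ineq}) and full satedness doing the heavy lifting. For part (1), I would start from the fact that $\l = \lim_n\l_n$ is the limit of the non-conventional averages, so that for any $f_1,\dots,f_d \in L^\infty(\mu)$ one has $\int_{X^d} f_1\otimes\cdots\otimes f_d\,\d\l = \lim_n\int_X \L_n(f_1,\dots,f_d)\,\d\mu$. Relative independence of $\pi_i$ over $\pi_i^{-1}(\Phi^{\langle i\rangle}_\bfX)$ under $\l$ amounts to showing that, if $\sfE_\mu(f_i\,|\,\Phi^{\langle i\rangle}_\bfX) = 0$ for some $i$, then $\int_{X^d}f_1\otimes\cdots\otimes f_d\,\d\l = 0$. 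Since $\Phi^{\langle i\rangle}_\bfX = \S^{L_{[d]}}_\bfX$ when $i$ is extremal, but more to the point $\Phi^{\langle i\rangle}_\bfX = \bigvee_\ell \S^{T_{(\ell;i]}}_\bfX \vee \bigvee_{\ell>i}\S^{T_{(i;\ell]}}_\bfX$ is (up to the difference between $K$- and $T$-invariance that Lemma~\ref{lem:subgp-props} handles) essentially $\S^{\sfF_{i,d}}_\bfX$, this is exactly the statement that $\sfV_{i,d,\bfX}$ is partially characteristic in position $i$ for $\L_n$ — which is Proposition~\ref{prop:pleasant} applied to a $\sfV_{i,j}$-sated system, and full satedness certainly implies $\sfV_{i,j}$-satedness for all $i \le j \le d$ (each $\sfF_{i,j}$ is a finite join of $\Phi^e$'s by the subgroup bookkeeping in Lemma~\ref{lem:subgp-props}). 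So part (1) reduces to carefully matching the $\s$-algebra $\Phi^{\langle i \rangle}_\bfX$ with $\S^{\sfF_{i,d}}_\bfX$ and then quoting Proposition~\ref{prop:pleasant}.

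For part (2), property (i) is immediate from $a \subseteq e \Rightarrow L_a \le L_e \Rightarrow \Phi^a_\bfX \supseteq \Phi^e_\bfX$, already noted in the text. Property (ii) — that for $i,j \in e$ and $A \in \Phi^e_\bfX$ one has $\l(\pi_i^{-1}(A)\triangle\pi_j^{-1}(A)) = 0$ — I would prove by unwinding what $\l$ does on pairs of coordinates: $\pi_i^{-1}(A)$ and $\pi_j^{-1}(A)$ differ under $\l$ by the set where $T^g_{(i;j]}x \in A$ but $x \notin A$ (after shifting), and $L_e$-invariance of $A$ (which includes invariance under $T^g_{(i_s;i_{s+1}]}$ for consecutive elements of $e$, hence under the composite $T^g_{(i;j]}$ for any $i<j$ in $e$) kills this difference; the point is that $\l$ is a limit of averages of $\delta_{(T^g_{[1;1]}x,\dots,T^g_{[1;d]}x)}$, so coordinate $i$ to coordinate $j$ is governed precisely by $T^g_{(i;j]}$. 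Property (iii) — relative independence of $\hat\Phi^\I_\bfX$ and $\hat\Phi^\J_\bfX$ over $\hat\Phi^{\I\cap\J}_\bfX$ under $\l$ — is the substantive part, and this is where the Host--Kra tower and satedness must be combined. Here I expect to proceed by induction on the up-sets (or on $d$), using Theorem~\ref{thm:ineq} to estimate the relevant averages by integrals over $\bfY^{(j)}$ against products $\prod_\eta \C^{|\eta|}f\circ\pi^{(j)}_\eta$, using the intertwining relations of Lemma~\ref{lem:intertwine} and Corollary~\ref{cor:intertwine} to see that the non-extremal factors in such a product are measurable with respect to joins of $\S^{L_a}$-type algebras pulled back along $\pi^{(j)}_{0^j}$, and then invoking satedness for the appropriate finite join $\bigvee_s \Phi^{e_s}_\bfX$ to push the resulting relative-independence statement back down from $\bfY^{(j)}$ to $\bfX$. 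Corollary~\ref{cor:some-invce} guarantees the $\s$-algebras in play are genuinely invariant under the relevant subgroup actions, so the conditional-expectation manipulations are legitimate.

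The main obstacle, as I see it, is property (iii): turning the qualitative structural information that satedness gives about one canonical $\s$-subalgebra inside an extension into the full mutual relative-independence over intersections that Proposition~\ref{prop:infremove} demands for arbitrary up-sets $\I,\J$. The difficulty is combinatorial as much as dynamical — one must set up an induction over the antichains of minimal elements of $\I$ (peeling off one maximal set $e$ at a time, reducing $\I$ to a smaller up-set) in such a way that at each stage the Host--Kra-type extension $\bfY^{(j)}$ being used, and the satedness hypothesis being invoked, exactly match the join $\Phi^\I_\bfX$ currently under consideration. This is the analog of the delicate bookkeeping in~\cite[Subsection 4.3]{Aus--thesis} and~\cite{Aus--newmultiSzem}, but now complicated by the fact that, for non-Abelian $G$, the $\s$-algebras $\S^{L_a}_\bfX$ need not be factors — one only has the partial invariance supplied by Corollary~\ref{cor:some-invce}, so one must be careful that every application of satedness is to a functorial $\s$-subalgebra that is genuinely of the admissible form $\bigvee_s \Phi^{e_s}_\bfX$. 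I would structure the write-up so that the dynamical input (Theorem~\ref{thm:ineq} plus satedness) is cleanly separated from the lattice-theoretic induction, mirroring the organization of~\cite{Aus--thesis} as closely as the non-Abelian complications allow.
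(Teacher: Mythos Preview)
Your plan for Part~(1) and for properties~(i) and~(ii) of Part~(2) matches the paper's. One small wrinkle in Part~(1): Proposition~\ref{prop:pleasant} gives characteristicity with respect to $\S^{\sfF_{i,d}}_\bfX$, whose defining join starts at $\ell = 0$ and so contains the extra term $\S^{T_{[1;i]}}_\bfX$; this is strictly larger than $\Phi^{\langle i\rangle}_\bfX$. The paper handles this by formulating a separate \emph{integrated}-average version (Proposition~\ref{prop:char-factors-again}), in which the $T_{[1;i_1]}$-invariance of $\mu$ lets one drop that first term. You would need to make the same adjustment.

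The genuine gap is in property~(iii). You propose to run the Host--Kra tower and Theorem~\ref{thm:ineq} again, but that machinery only tells you that certain $\s$-algebras are \emph{characteristic} for the averages --- i.e.\ it controls how $\pi_i^{-1}(\S_\bfX)$ sits relative to $\pi_i^{-1}(\Phi^{\langle i\rangle}_\bfX)$ under $\l$. It does not by itself give the \emph{mutual} relative independence of the oblique $\s$-algebras $\hat\Phi^\I_\bfX$ and $\hat\Phi^\J_\bfX$ over $\hat\Phi^{\I\cap\J}_\bfX$. For that, the paper first proves a relative-independence statement \emph{inside} $(X,\mu)$: namely that $\Phi^{e_0}_\bfX$ is relatively independent from $\bigvee_s\Phi^{e_s}_\bfX$ over $\bigvee_s\Phi^{e_0\cup e_s}_\bfX$ whenever the $e_s$ have a common element (Proposition~\ref{prop:next-rel-ind} and Corollary~\ref{cor:second-rel-ind}). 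This step requires a \emph{different} extension construction from the Host--Kra tower: one forms the relative product $(X^2,\mu\otimes_{\Phi^{\{i,j\}}_\bfX}\mu)$ and equips it with a carefully chosen \emph{non-diagonal} $L_e$-action (Proposition~\ref{prop:first-rel-ind}), then lifts to a $G^d$-extension via Theorem~\ref{thm:recoverG} and invokes satedness there. Only after this in-$\mu$ relative independence is established does one combine it with the characteristic-factor information under $\l$ (in Lemma~\ref{lem:rel-ind}) to deduce property~(iii). Your outline conflates these two distinct uses of satedness; without the second construction, the induction over up-sets that you anticipate has no base case.
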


\begin{proof}[Proof of Theorem B from Theorem~\ref{thm:struct-of-coupling}]
By passing to an extension as given by Corollary~\ref{cor:mult-sateds-exist}, it suffices to prove Theorem B for fully sated $G^d$-spaces.  Specifically, we will prove the implication~(\ref{eq:multirec}).

Let $A_1$, \ldots, $A_d \subseteq X$ be measurable.  Part (1) of Theorem~\ref{thm:struct-of-coupling} gives
\[\l(A_1\times \cdots \times A_d) = \int_X f_1\otimes \cdots \otimes f_d \,\d\l,\]
where $f_i := \sfE_\mu(1_{A_i}\,|\,\Phi_\bfX^{\langle i\rangle})$.  Letting $B_i := \{f_i > 0\}$ for each $i$, it follows that
\begin{multline*}
\l(A_1\times \cdots \times A_d) = 0 \\ \Longrightarrow \quad \l\{f_1\otimes \cdots \otimes f_d > 0\} = \l(B_1\times \cdots \times B_d) = 0.
\end{multline*}

On the other hand, each $B_i$ is $\Phi^{\langle i\rangle}_\bfX$-measurable.  By Part (2) of Theorem~\ref{thm:struct-of-coupling}, we may therefore apply Proposition~\ref{prop:infremove} in the form of the implication~(\ref{eq:spec-case}), to conclude
\[\l(B_1\times \cdots \times B_d) = 0 \quad \Longrightarrow \quad \mu(B_1\cap \cdots \cap B_d) = 0.\]
Since
\[\mu(A_i\setminus B_i) = \int 1_{A_i}\cdot 1_{X\setminus B_i}\,\d\mu  = \int f_i\cdot 1_{X\setminus B_i}\,\d\mu = 0\]
for each $i$, this completes the proof.
\end{proof}

The rest of this section will go into proving Theorem~\ref{thm:struct-of-coupling}.  Subsection~\ref{subs:jointdist1} will establish various necessary joint-distribution properties of the $\s$-algebras $\Phi^e_\bfX$ in $(X,\mu)$ itself, and then Subsection~\ref{subs:jointdist2} will deduce the required properties of the lifts $\pi_i^{-1}(\Phi^e_\bfX)$ from these.

\subsection{Joint distribution of some $\s$-algebras of invariant sets}\label{subs:jointdist1}

The next proposition will be the second major outing for satedness in this paper.  It shows that if a $G^d$-space $\bfX$ is sated relative to a suitable family of functorial $\s$-subalgebras constructed out of the collection $\Phi^e_{\bfX}$, $e\subseteq [k]$, then this forces some relative independence among those $\s$-subalgebras.

\begin{prop}\label{prop:first-rel-ind}
Suppose that $\{i < j\} \subseteq [d]$, suppose that $e_1$, \ldots, $e_r \in \binom{[d]}{\geq 2}$, and let $\bfX = (X,\mu,T_1,\ldots,T_d)$ be a $G^d$-space.

\begin{enumerate}
\item[(1)] If $e_s\cap [i;j) = \{i\}$ for every $s \leq r$, and if $\bfX$ is $\sfF$-sated for
\[\S^\sfF_\bfX := \bigvee_{s=1}^r\Phi_{\bfX}^{e_s\cup \{j\}},\]
then
\[\Phi_\bfX^{\{i,j\}} \quad \hbox{and} \quad \bigvee_{s=1}^r \Phi^{e_s}_\bfX \quad \hbox{are rel. indep. over} \quad \S^\sfF_\bfX.\]

\item[(2)] If $e_s\cap (i;j] = \{j\}$ for every $s \leq r$, and if $\bfX$ is $\sfG$-sated for
\[\S^\sfG_{\bfX} := \bigvee_{s=1}^r\Phi_{\bfX}^{e_s\cup \{i\}},\]
then
\[\Phi_\bfX^{\{i,j\}} \quad \hbox{and} \quad \bigvee_{s=1}^r \Phi^{e_s}_\bfX \quad \hbox{are rel. indep. over} \quad \S^\sfG_\bfX.\]
\end{enumerate}
\end{prop}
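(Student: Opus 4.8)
The plan is to prove both parts by exploiting the satedness hypothesis through a relative self-joining, in the spirit of the earlier applications of satedness; I describe part~(1), and part~(2) will be its mirror image with the left endpoint $i$ of the block $(i;j]$ playing the role of $j$. Since $L_{e_s}\leq L_{e_s\cup\{j\}}$ by Lemma~\ref{lem:subgp-props}(2), we have $\S^\sfF_\bfX\subseteq\bigvee_{s}\Phi^{e_s}_\bfX$, so the asserted relative independence is equivalent to the assertion that $\sfE_\mu(f\,|\,\bigvee_s\Phi^{e_s}_\bfX)$ is $\S^\sfF_\bfX$-measurable for every bounded $\Phi^{\{i,j\}}_\bfX$-measurable $f$. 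First I would extract the combinatorial content of the hypothesis $e_s\cap[i;j)=\{i\}$: it forces $L_{e_s}\leq H_{\{i,j\}}$, and, together with Lemma~\ref{lem:subgp-props}(3), $L_{\{i,j\}}\unlhd H_{e_s\cup\{j\}}$; a routine check with the definitions then gives $L_{e_s\cup\{j\}}=L_{e_s}\cdot L_{\{i,j\}}$, hence $\Phi^{e_s\cup\{j\}}_\bfX=\Phi^{e_s}_\bfX\cap\Phi^{\{i,j\}}_\bfX$. Moreover, since $L_{\{i,j\}}\unlhd H_{\{i,j\}}$ (Lemma~\ref{lem:subgp-props}(3)), the $\s$-algebra $\Phi^{\{i,j\}}_\bfX=\S^{L_{\{i,j\}}}_\bfX$ is globally $H_{\{i,j\}}$-invariant, hence globally $L_{e_s}$-invariant (cf.\ Corollary~\ref{cor:some-invce}). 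For a single $e_s$ this already settles the claim without satedness: if $h$ is $\Phi^{e_s}_\bfX$-measurable then $\sfE_\mu(h\,|\,\Phi^{\{i,j\}}_\bfX)$ is both $\Phi^{\{i,j\}}_\bfX$-measurable and, by the global $L_{e_s}$-invariance, again $L_{e_s}$-invariant, hence $\Phi^{e_s\cup\{j\}}_\bfX$-measurable.

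For $r\geq 2$ the obstruction is the familiar fact that relative independence does not survive taking joins, and this is exactly where satedness must be used. The plan is to build a $G^d$-extension $\bfX'\stackrel{\pi}{\to}\bfX$ that ``decouples'' the $\Phi^{e_s}_\bfX$, by a relative self-joining modelled on the Host--Kra-type construction that produces the tower~(\ref{eq:tower}). Concretely: inside the $H_{\{i,j\}}$-subaction --- over which $\Phi^{\{i,j\}}_\bfX$ is a genuine factor, by Corollary~\ref{cor:some-invce} --- form the relative product measure $\mu\otimes_{\Phi^{\{i,j\}}_\bfX}\mu$ on $X\times X$ and equip it with the product of the honest $H_{\{i,j\}}$-action on the first coordinate and, on the second, a suitably twisted $H_{\{i,j\}}$-action of the kind used in the construction preceding Theorem~\ref{thm:ineq} --- roughly, one trivializes the coordinates in $(i;j]$ and sets the $i$th coordinate to act by $T_{[i;j]}$, so that this twisted action agrees with $T$ on $\Phi^{\{i,j\}}_\bfX$ and the relative product is invariant; then invoke Theorem~\ref{thm:recoverG} to lift this $H_{\{i,j\}}$-extension to a $G^d$-extension $\bfX'$. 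Writing $\pi_0=\pi$ and $\pi_1$ for the two maps $\bfX'\to\bfX$ coming from the two coordinates, $\pi_0$ is a $G^d$-factor map, $\pi_1$ intertwines only $H_{\{i,j\}}$, one has $f\circ\pi_0=f\circ\pi_1$ for $\Phi^{\{i,j\}}_\bfX$-measurable $f$, and $\pi_0^{-1}(\S_X)$ and $\pi_1^{-1}(\S_X)$ are relatively independent over $\pi_0^{-1}(\Phi^{\{i,j\}}_\bfX)$.

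The closing step is to leverage this extension against the $\sfF$-satedness of $\bfX$: applied to $\pi$, satedness says that $\pi^{-1}(\S_X)$ and $\S^\sfF_{\bfX'}=\bigvee_s\Phi^{e_s\cup\{j\}}_{\bfX'}$ are relatively independent over $\pi^{-1}(\S^\sfF_\bfX)$, so that any function which is both $\pi^{-1}(\S_X)$-measurable and $\S^\sfF_{\bfX'}$-measurable is already $\pi^{-1}(\S^\sfF_\bfX)$-measurable. One therefore wants to exhibit on $\bfX'$ a function that is both $\pi^{-1}(\S_X)$-measurable and $\S^\sfF_{\bfX'}$-measurable and that encodes $\sfE_\mu(f\,|\,\bigvee_s\Phi^{e_s}_\bfX)\circ\pi$ --- the twist being precisely what routes the lifted $\bigvee_s\Phi^{e_s}_\bfX$-information, with the single-$e_s$ computation of the first paragraph now available coordinatewise on $\bfX'$, into $\S^\sfF_{\bfX'}$; satedness then pushes this back down to the $\S^\sfF_\bfX$-measurability of $\sfE_\mu(f\,|\,\bigvee_s\Phi^{e_s}_\bfX)$ that we need. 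Part~(2) is handled by the mirror construction, adding $i$ rather than $j$ to each $e_s$ and twisting the copy at the left end of the block. I expect the heart of the proof --- and where most of the work will go --- to be exactly this middle step: choosing the twisted self-joining so that it genuinely carries the lifted $\bigvee_s\Phi^{e_s}_\bfX$-data into $\S^\sfF_{\bfX'}$ for all $s$ simultaneously, and checking that the co-induced $G^d$-space of Theorem~\ref{thm:recoverG} interacts correctly with the subgroups $L_{e_s}$, $L_{e_s\cup\{j\}}$ and $L_{\{i,j\}}$, none of which is normal in $G^d$ when $G$ is non-abelian, so that $\Phi^{e_s}_\bfX$ need not even be a factor.
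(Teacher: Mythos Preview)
Your overall architecture --- form the relative product over $\Phi^{\{i,j\}}_\bfX$, twist the action on the second coordinate, apply Theorem~\ref{thm:recoverG}, then invoke satedness --- is exactly the paper's strategy. But the particular twist you propose is the wrong one, and this is not just a detail to be fixed later: the Host--Kra twist from the tower construction (trivialize $(i;j]$, let the $i$th coordinate act by $T_{[i;j]}$, keep the rest honest) does \emph{not} force $g_s\circ\beta_2$ to be $L_{e_s\cup\{j\}}$-invariant when $e_s$ has elements larger than $j$ but $j\notin e_s$. For a concrete failure take $d=3$, $\{i,j\}=\{1,2\}$, $r=1$, $e_1=\{1,3\}$: your twist makes $(1_G,1_G,g)$ act on the second coordinate by $T_3^g$, whereas $g_1$ is only $T_2T_3$-invariant, so $g_1\circ\beta_2$ is not $L_{\{2,3\}}$-invariant and hence not $\Phi^{\{1,2,3\}}$-measurable. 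The paper instead works on the subgroup $L_e$ with $e:=e_1\cup\cdots\cup e_r\cup\{j\}$, not on $H_{\{i,j\}}$, and the twist there merges the $(i;j]$-block into the \emph{next} block of $e$: on the second coordinate the generator of $L_{\{i,j\}}$ acts trivially, while the generator of $L_{\{j,i_{\ell_0+2}\}}$ acts by $T_{(i;i_{\ell_0+2}]}$ rather than $T_{(j;i_{\ell_0+2}]}$; all other generators act diagonally. In the example this makes $(1_G,1_G,g)$ act by $(T_2T_3)^g$, which is exactly what is needed. The point is that the twist must be tailored to $e$, not just to $\{i,j\}$.

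Your closing step is also slightly off-target: you do not need to manufacture a single function on $\bfX'$ that is simultaneously $\pi^{-1}(\S_X)$- and $\S^\sfF_{\bfX'}$-measurable. The paper's route is cleaner: use the relative-product identity $\int f\cdot\prod_s g_s\,\d\mu=\int (f\circ\beta_1)\cdot\prod_s(g_s\circ\beta_2)\,\d\nu$ (valid because $f$ is $\Phi^{\{i,j\}}_\bfX$-measurable), verify that $\prod_s(g_s\circ\beta_2\circ\alpha)$ is $\S^\sfF_{\bfX_1}$-measurable via the invariances just established, apply $\sfF$-satedness of $\bfX$ to replace $f\circ\beta_1$ by $\sfE_\mu(f\,|\,\S^\sfF_\bfX)\circ\beta_1$, and then run the relative-product identity backwards.
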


\begin{proof}[Proof of Part (1).]
As always, the appeal to satedness will depend on constructing the right extension.  Let $e := e_1 \cup \ldots \cup e_r \cup \{j\}$, so that our assumptions give $e\cap [i;j] = \{i,j\}$.

\vspace{7pt}

\emph{Step 1.}\quad We first construct a suitable extension of the $L_e$-subaction $\bfX^{\uhr L_e}$.

Since $e \cap [i;j] = \{i,j\}$, Corollary~\ref{cor:some-invce} promises that $\Phi_\bfX^{\{i,j\}}$ is globally $L_e$-invariant, so the relative product measure $\nu:=\mu\otimes_{\Phi_\bfX^{\{i,j\}}}\mu$ is invariant under the diagonal action of $L_e$ on $Y := X^2$.  We will make use of this by constructing a \emph{non}-diagonal action of $L_e$ on $Y$ which still preserves $\nu$.

Let $e$ be enumerated as $\{i_1 < \ldots < i_m\}$.  Our assumptions imply that $\{i,j\} = \{i_{\ell_0},i_{\ell_0+1}\}$ for some $\ell_0 \leq m-1$.  In these terms, $L_e$ is generated by its $m-1$ commuting subgroups
\[L_{\{i_\ell,i_{\ell+1}\}} = \phi_\ell(G),\quad \ell = 1,\ldots,m-1,\]
where $\phi_\ell:G\to G^d$ is the injective homomorphism defined by
\[\big(\phi_\ell(g)\big)_i := \left\{\begin{array}{ll}g &\quad \hbox{if}\ i \in (i_\ell;i_{\ell+1}] \\ e &\quad \hbox{else}.\end{array}\right.\]

Specifying an action of $L_e$ is equivalent to specifying commuting actions of its subgroups $\phi_\ell(G)$.  We define our new, non-diagonal action $S:L_e\actson (Y,\nu)$ as follows:
\[S^{\phi_\ell(g)} := \left\{\begin{array}{ll}T_{(i_\ell;i_{\ell+1}]}^g\times T_{(i_\ell,i_{\ell+1}]}^g \quad \hbox{(i.e., diagonal)} &\quad \hbox{if}\ \ell \not\in \{\ell_0,\ell_0+1\}\\
T_{(i_{\ell_0};i_{\ell_0+1}]}^g\times \rm{id} &\quad \hbox{if}\ \ell = \ell_0\\
T_{(i_{\ell_0+1};i_{\ell_0+2}]}^g\times T_{(i_{\ell_0},i_{\ell_0+2}]}^g &\quad \hbox{if}\ \ell = \ell_0+1\end{array}\right.\]
(where the last option here is vacuous in case $\ell_0 = m-1$).

Each of these transformations leaves $\nu$ invariant:  we have already remarked this for the diagonal transformations, and for the last two possibilities we need only observe that $\nu$ is a relative product over a $\s$-algebra on which $T^g_{(i_{\ell_0};i_{\ell_0+1}]} = T^g_{(i;j]}$ acts trivially for all $g \in G$.

Now let $\b_1,\b_2:Y\to X$ be the two coordinate projections.  The above definition gives $\b_1\circ S^{\phi_\ell(g)} = T^{\phi_\ell(g)}\circ \b_1$ for every $\ell$ and $g$, so letting $\bfY$ denote the $L_e$-space given by the above transformations $S$ on $(Y,\nu)$, we have a factor map $\bfY\stackrel{\b_1}{\to} \bfX^{\uhr L_e}$.  On the other hand, recalling that $\{i_{\ell_0},i_{\ell_0+1}\} = \{i,j\}$, the above definitions give that under $S$, the subgroup $L_{\{i,j\}} \leq L_e$ acts trivially on the second coordinate in $Y$.

\vspace{7pt}

\emph{Step 2.}\quad Next, applying Theorem~\ref{thm:recoverG} enlarges this to an extension $\bfX_1\stackrel{\pi}{\to} \bfX$ of $G^d$-spaces which factorizes through some $L_e$-extension $\bfX_1^{\uhr L_e}\stackrel{\a}{\to} \bfY$.

\vspace{7pt}

\emph{Step 3.}\quad Now suppose that $f \in L^\infty(\mu|_{\Phi_\bfX^{\{i,j\}}})$ and that $g_s \in L^\infty(\mu|_{\Phi^{e_s}_\bfX})$ for each $s \leq r$.  From the definition of $\nu$ and the fact that $f$ is $\Phi^{\{i,j\}}_\bfX$-measurable, we have
\begin{multline}\label{eq:move-int}
\int_X f\cdot \prod_{s \leq r}g_s\,\d\mu = \int_Y (f\circ \b_2)\cdot \Big(\prod_{s \leq r}g_s\circ \b_2\Big)\,\d\nu\\ = \int_Y (f\circ\b_1)\cdot \Big(\prod_{s\leq r}g_s\circ \b_2\Big)\,\d\nu.
\end{multline}

Consider the function $g_s\circ \b_2$ for some $s\leq r$.  Our next step is to show that it is invariant under the whole of $S^{\uhr L_{e_s\cup \{j\}}}$.  For a given $s$, if we enumerate $e_s \cup \{j\} =: \{p_1 < \ldots < p_n\}$, then it suffices to prove invariance under each subgroup $L_{\{p_k,p_{k+1}\}}$ for $k \in \{1,2,\ldots,n-1\}$.

There are three cases to consider.  Firstly, if $p_k \not\in \{i,j\}$, then, since $e\cap [i;j) = \{i\}$, it follows that $(p_k;p_{k+1}]$ is disjoint from $(i_{\ell_0};i_{\ell_0+2}]$ (using again the notation from Step 1).  In this case the definition of $S$ gives $\b_2 \circ S_{(p_k;p_{k+1}]} = T_{(p_k;p_{k+1}]}\circ \b_2$, so the required invariance follows from the fact that $g_s$ itself is $L_{e_s}$-invariant.

Second, if $p_k = i$, then the assumption $e_s \cap [i;j) = \{i\}$ implies that $p_{k+1} = j$, and hence the definition of $S$ implies that $\b_2 \circ S_{(p_k;p_{k+1}]} = \b_2$, from which the $S_{(p_k;p_{k+1}]}$-invariance of $g_s\circ \b_2$ is obvious.

Finally, if $p_k = j$, then the definition of $S$ gives
\[\b_2\circ S_{(p_k;p_{k+1}]} = T_{(i;p_{k+1}]}\circ \b_2.\]
Since $\{i,p_{k+1}\} \subseteq e_s$ (even if $j\not\in e_s$), once again the $L_{e_s}$-invariance of $g_s$ gives
\[g_s\circ \b_2\circ S^g_{(p_k;p_{k+1}]} = g_s\circ T^g_{(i;p_{k+1}]}\circ \b_2 = g_s\circ \b_2,\]
as required.

\vspace{7pt}

\emph{Step 4.}\quad In light of Step 3, the function $\prod_{s \leq r}(g_s\circ \b_2\circ \a)$ is measurable with respect to
\[\bigvee_{s\leq r}\Phi_{\bfX_1}^{e_s\cup \{j\}} = \S^\sfF_{\bfX_1}.\]
By the assumed $\sfF$-satedness, it follows that the right-hand integral in~(\ref{eq:move-int}) is equal to
\[\int_Y (\sfE_\mu(f\,|\ \S_\bfX^\sfF)\circ\b_1)\cdot \Big(\prod_{s\leq r}g_s\circ \b_2\Big)\,\d\nu,\]
and by the same reasoning that gave~(\ref{eq:move-int}) itself this is equal to
\[\int_X \sfE_\mu(f\,|\ \S_\bfX^\sfF)\cdot \prod_{s \leq r}g_s\,\d\mu = \int_X \sfE_\mu(f\,|\ \S_\bfX^\sfF)\cdot \sfE_\mu\Big(\prod_{s \leq r}g_s\,\Big|\,\S^\sfF_\bfX\Big)\,\d\mu.\]

Since $f$ and each $g_s$ were arbitrary subject to their measurability assumptions, this implies that $\Phi_\bfX^{\{i,j\}}$ and $\bigvee_{s=1}^r\Phi_\bfX^{e_s}$ are relatively independent over $\S^\sfF_\bfX$.
\end{proof}

\begin{proof}[Proof of Part (2).]
This follows exactly the same steps as Part 1, except that now the new $L_e$-action $S$ on
\[(Y,\nu) := (X^2,\mu\otimes_{\Phi^{\{i,j\}}_\bfX}\mu)\]
is defined as follows:
\[S^{\phi_\ell(g)} := \left\{\begin{array}{ll}T_{(i_\ell;i_{\ell+1}]}^g\times T_{(i_\ell,i_{\ell+1}]}^g \quad \hbox{(i.e., diagonal)} &\quad \hbox{if}\ \ell \not\in \{\ell_0-1,\ell_0\}\\
T_{(i_{\ell_0};i_{\ell_0+1}]}^g\times \rm{id} &\quad \hbox{if}\ \ell = \ell_0\\
T_{(i_{\ell_0-1};i_{\ell_0}]}^g\times T_{(i_{\ell_0-1},i_{\ell_0+1}]}^g &\quad \hbox{if}\ \ell = \ell_0-1,\end{array}\right.\]
where now $e := e_1\cup \cdots \cup e_r\cup \{i\} = \{i_1 < \ldots < i_m\}$, and $\ell_0$ is such that $\{i,j\} = \{i_{\ell_0},i_{\ell_0 + 1}\}$, as before.
\end{proof}

The next two propositions contain the consequences of full satedness that we need.  The first modifies the conclusion of Proposition~\ref{prop:pleasant} in the case of integrated averages, rather than functional averages.

\begin{prop}\label{prop:char-factors-again}
Suppose that $\bfX$ is fully sated, that $\{i_1 < \ldots < i_k\} \subseteq [d]$ and that $f_1$, \ldots, $f_k \in L^\infty(\mu)$.  Then
\begin{multline*}
\lim_{n\to\infty}\int_X \frac{1}{|F_n|}\sum_{g \in F_n}\prod_{j=1}^k (f_j\circ T_{[1;i_j]}^g)\,\d\mu\\ = \lim_{n\to\infty}\int_X \frac{1}{|F_n|}\sum_{g \in F_n}\prod_{j=1}^k (\sfE_\mu(f_j\,|\,\Delta_j)\circ T_{[1;i_j]}^g)\,\d\mu,
\end{multline*}
where
\[\Delta_j := \bigvee_{\ell = 1}^{j-1}\S^{T_{(i_\ell;i_j]}}_\bfX \vee \bigvee_{\ell = j+1}^k\S^{T_{(i_j;i_\ell]}}_\bfX.\]
\end{prop}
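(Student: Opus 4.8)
The plan is to reduce the claim to Proposition~\ref{prop:pleasant} by a rebasing trick, so that no genuinely new argument about non-conventional averages is needed. The case $k=1$ is immediate, since then the integrated average equals $\int_X f_1\,\d\mu=\int_X\sfE_\mu(f_1\,|\,\Delta_1)\,\d\mu$, so assume $k\geq2$. By Theorem~A the relevant functional averages converge in $L^2(\mu)$, so both limits in the statement exist; by multilinearity it suffices to show that for each fixed $j\in[k]$ one may replace $f_j$ by $\sfE_\mu(f_j\,|\,\Delta_j)$ without affecting the limit of the integrated averages, and these replacements can be performed one after another, since conditional expectation preserves $L^\infty(\mu)$.

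The rebasing exploits the invariance of $\mu$: because $\mu$ is $T_{[1;i_1]}^g$-invariant, for every $n$,
\[\int_X\frac{1}{|F_n|}\sum_{g\in F_n}\prod_{\ell=1}^k(f_\ell\circ T_{[1;i_\ell]}^g)\,\d\mu=\int_X f_1\cdot\hat{\L}_n^{(k-1)}(f_2,\ldots,f_k)\,\d\mu,\]
where the average on the right is formed with the commuting tuple $U_m:=T_{(i_m;i_{m+1}]}$, $1\leq m\leq k-1$ (these commute, since their index blocks are disjoint), in place of $(T_1,\ldots,T_{k-1})$. Symmetrically, rebasing by $T_{[1;i_k]}^{-g}$ yields $\int_X f_k\cdot\hat{\L}_n^{(k-1)}(f_{k-1},\ldots,f_1)\,\d\mu$ formed with $W_m:=T_{(i_{k-m};i_{k-m+1}]}^{-1}$. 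A direct computation identifies the functorial $\s$-algebra attached by Proposition~\ref{prop:pleasant} to position $p$ of the $U$-tuple with $\Delta_{p+1}$, and to position $p$ of the $W$-tuple with $\Delta_{k-p}$; hence the first presentation handles the replacement of $f_j$ for $2\leq j\leq k$ and the second that of $f_j$ for $1\leq j\leq k-1$, between them covering all $k$ positions. For each replacement, Proposition~\ref{prop:pleasant} asserts that the functional average is unchanged in the $L^2$-limit after projecting the relevant argument onto the corresponding characteristic subspace, and pairing against the fixed bounded function $f_1$ or $f_k$ and applying the Cauchy--Schwarz inequality transports this to the integrated averages.

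What makes the rebasing useful is that the ``$\ell=0$'' obstruction that $\Delta_j$ is designed to avoid does not arise for the $U$- and $W$-tuples: every invariant $\s$-algebra $\S^{U_{(\ell;p]}}_\bfX$, $\S^{U_{(p;\ell]}}_\bfX$ (and likewise for $W$) equals $\S^{T_{(i_a;i_b]}}_\bfX=\Phi^{\{i_a,i_b\}}_\bfX$ for some $\{i_a,i_b\}\subseteq\{i_1,\ldots,i_k\}\subseteq[d]$, because such a product of consecutive $U_m$'s (or $W_m$'s) telescopes to a single action $T_{(i_a;i_b]}$. Thus the functorial $\s$-algebras of Proposition~\ref{prop:pleasant} for these tuples are exactly of the form $\bigvee_s\Phi^{e_s}_\bfX$ occurring in the definition of full satedness. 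It then remains to deduce satedness of $\bfX$ \emph{as a $G^{k-1}$-space} via the $U$- (resp.\ $W$-) tuple from full satedness of $\bfX$ \emph{as a $G^d$-space}. Both tuples realize $\bfX$ as the $L_e$-subaction $\bfX^{\uhr L_e}$ with $e:=\{i_1<\cdots<i_k\}$, so given any $G^{k-1}$-extension of this subaction, Theorem~\ref{thm:recoverG} produces a $G^d$-extension of $\bfX$ lying over it; combining the relative independence granted by full satedness of $\bfX$ on that larger extension with functoriality of the $\S^{L_{e_s}}$, and pushing the relative independence back down along the lift, yields the required satedness of $\bfX^{\uhr L_e}$. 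I expect this transfer of satedness between a group and a subgroup --- the one place where Theorem~\ref{thm:recoverG} is really needed here --- to be the main obstacle; the rebasing identities and the bookkeeping matching the subspaces $\sfV_{p,p'}$ of Proposition~\ref{prop:pleasant} to the $\Delta_j$ are routine once it is in place.
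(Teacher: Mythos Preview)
Your rebasing by $T_{[1;i_1]}^g$ to pull out $f_1$, followed by the satedness transfer via Theorem~\ref{thm:recoverG} and the appeal to Proposition~\ref{prop:pleasant} for the tuple $U_m=T_{(i_m;i_{m+1}]}$, is exactly the paper's argument and correctly handles the replacement of $f_2,\ldots,f_k$.

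The gap is in your treatment of $f_1$ via the ``$W$-tuple''. Writing out what $W_m:=T_{(i_{k-m};i_{k-m+1}]}^{-1}$ must mean, one has $W_m^g=T_{(i_{k-m};i_{k-m+1}]}^{g^{-1}}$, and then $W_m^{gh}=W_m^{\,h}W_m^{\,g}$: this is an anti-homomorphism, not a left $G$-action, so Proposition~\ref{prop:pleasant} --- which is stated and proved for commuting left $G$-actions averaged along a \emph{left} F{\o}lner sequence, via the van der Corput estimate and the Norm Ergodic Theorem --- does not apply. Equivalently, after the change of variable $g\mapsto g^{-1}$ your averages run over $F_n^{-1}$, and nowhere is it assumed that $(F_n^{-1})_n$ is left F{\o}lner. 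The ``symmetry'' you invoke is genuine only when $G$ is Abelian or the F{\o}lner sequence is two-sided, which is precisely the restriction this paper is trying to remove.

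The paper instead handles $f_1$ by induction on $k$. Once $f_k$ has been replaced by $\sfE_\mu(f_k\,|\,\Delta_k)$ via the $U$-tuple step, it can be approximated by finite sums of products $h_1\cdots h_{k-1}$ with each $h_\ell$ being $T_{(i_\ell;i_k]}$-invariant; those invariances let one fold the $h_\ell$'s into $f_1,\ldots,f_{k-1}$ and collapse to an integrated average with only $k-1$ transformations, at which point the inductive hypothesis replaces $f_1$ by $\sfE_\mu(f_1\,|\,\Delta_1)$. Swapping this inductive reduction for your second rebasing repairs the argument.
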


\begin{proof}
This is proved by induction on $k$.  When $k=1$ it is trivial, by the $T_{[1;i_1]}$-invariance of $\mu$, so suppose $k\geq 2$.

Because $\mu$ is $T_{[1;i_1]}$-invariant, the desired conclusion is equivalent to
\begin{multline*}
\int_X f_1\cdot \Big(\lim_{n\to\infty}\frac{1}{|F_n|}\sum_{g \in F_n}\prod_{j=2}^k (f_j\circ T_{(i_1;i_j]}^g)\Big)\,\d\mu\\ = \int_X \sfE_\mu(f_1\,|\,\Delta_1)\cdot \Big(\lim_{n\to\infty}\frac{1}{|F_n|}\sum_{g \in F_n}\prod_{j=2}^k (\sfE_\mu(f_j\,|\,\Delta_j)\circ T_{(i_1;i_j]}^g)\Big)\,\d\mu.
\end{multline*}
However, $\bfX$ being fully sated implies that the $G^{k-1}$-space $\bfX'$ defined by
\[T'_j := T_{(i_j,i_{j+1}]} \quad \hbox{for}\ j=1,2,\ldots,k-1\]
is also fully sated: otherwise, we could turn a $G^{k-1}$-extension witnessing the failure of satedness for $\bfX'$ back into a $G^d$-extension of $\bfX$ using Theorem~\ref{thm:recoverG}.  Therefore Proposition~\ref{prop:pleasant} applied to this $G^d$-space gives
\begin{multline*}
\int_X f_1\cdot \Big(\lim_{n\to\infty}\frac{1}{|F_n|}\sum_{g \in F_n}\prod_{j=2}^k (f_j\circ T_{(i_1;i_j]}^g)\Big)\,\d\mu\\ = \int_X f_1\cdot \Big(\lim_{n\to\infty}\frac{1}{|F_n|}\sum_{g \in F_n}\prod_{j=2}^k (\sfE_\mu(f_j\,|\,\Delta_j)\circ T_{(i_1;i_j]}^g)\Big)\,\d\mu,
\end{multline*}
since the $\s$-algebras $\Delta_j$ for $j\geq 2$ are those that arise by applying the functorial $\s$-subalgebras $\sfF_{\bullet,\bullet}$ of that proposition to the $G^{k-1}$-space $\bfX'$.

This almost completes the proof. To finish, observe that, as in the proof of Theorem A, we may now approximate $f_k$ (say) by a finite sum of finite products of functions measurable with respect to $\S_\bfX^{T_{(i_\ell,i_k]}}$ for $\ell=1,\ldots,k-1$, and having done so we may re-arrange the above into an analogous system of averages with only $k-1$ transformations.  At that point, the inductive hypothesis allows us to replace $f_1$ with $\sfE_\mu(f_1\,|\,\Delta_1)$, completing the proof.
\end{proof}

\begin{prop}\label{prop:next-rel-ind}
Suppose that $\bfX$ is fully sated, and that $e_0$, $e_1$, \ldots, $e_r \in \binom{[d]}{\geq 2}$ are sets such that $\bigcap_{0 \leq s\leq r}e_s \neq \emptyset$.  Let $g$ be $\Phi^{e_0}_\bfX$-measurable and $f_s$ be $\Phi^{e_s}_\bfX$-measurable for $1 \leq s \leq r$, and suppose $g$ and every $f_s$ are bounded.  Then
\[\int_X g\cdot f_1\cdot\cdots \cdot f_r\,\d\mu = \int_X\sfE_\mu(g\,|\,\Psi)\cdot f_1\cdot \cdots \cdot f_r\,\d\mu,\]
where
\[\Psi:= \bigvee_{s=1}^r \Phi^{e_0\cup e_s}_\bfX.\]
\end{prop}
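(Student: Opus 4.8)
The plan is to reformulate the desired identity as a relative-independence statement and then prove it by an induction that repeatedly appeals to Proposition~\ref{prop:first-rel-ind}. Since finite sums of products $f_1\cdots f_r$ with $f_s$ measurable for $\Phi^{e_s}_\bfX$ are dense in $L^2(\mu\,|\,\bigvee_{s=1}^r\Phi^{e_s}_\bfX)$, proving the stated equality for all admissible $g,f_1,\dots,f_r$ is the same as proving that $\Phi^{e_0}_\bfX$ and $\bigvee_{s=1}^r\Phi^{e_s}_\bfX$ are relatively independent over $\Psi=\bigvee_{s=1}^r\Phi^{e_0\cup e_s}_\bfX$. Two elementary facts about the subgroups $L_e\le G^d$ are used throughout: if $a\subseteq b$ with $|a|\ge 2$ then $L_a\le L_b$, hence $\Phi^b_\bfX\subseteq\Phi^a_\bfX$; and if $a\cap b\ne\emptyset$ then $L_a$ and $L_b$ together generate $L_{a\cup b}$ — split $a$ and $b$ at a common element and observe that the two resulting interval partitions on each side have a staircase-shaped common refinement — hence $\Phi^a_\bfX\cap\Phi^b_\bfX=\Phi^{a\cup b}_\bfX$. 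Finally, full satedness of $\bfX$ is exactly the hypothesis needed to invoke Proposition~\ref{prop:first-rel-ind} at will.

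I would induct on a combinatorial complexity of the tuple $(e_0;e_1,\dots,e_r)$ that measures how far $\Psi$ is from $\Phi^{e_0}_\bfX$ — for instance $\sum_{s=1}^r|e_s\setminus e_0|$, refined by the span $\max e_0-\min e_0$ as a tie-breaker. When the complexity is minimal, every $e_s$ is contained in $e_0$, so $\Phi^{e_0\cup e_s}_\bfX=\Phi^{e_0}_\bfX$, $\Psi=\Phi^{e_0}_\bfX$, and there is nothing to prove. For the inductive step, fix $i\in\bigcap_{0\le s\le r}e_s$ and an index $t\ge 1$ with $e_t\not\subseteq e_0$; let $j$ be the element of $e_t$ adjacent to $i$ on the side along which $e_t$ first leaves $e_0$, and $j'$ the adjacent element of $e_0$ on that side, so that $\{i,j'\}$ is a consecutive pair of $e_0$ and $g$, being $\Phi^{e_0}_\bfX$-measurable, is in particular $\Phi^{\{i,j'\}}_\bfX$-measurable. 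A single application of Proposition~\ref{prop:first-rel-ind}, with the pair $\{i,j'\}$ and the sub-collection of the $e_s$ that meet its side condition, replaces $g$ inside $\int_X g\cdot f_1\cdots f_r\,\d\mu$ by its conditional expectation onto $\bigvee_s\Phi^{e_s\cup\{j'\}}_\bfX$. Using that the \emph{original} $g$ was $\Phi^{e_0}_\bfX$-measurable together with the identity $\Phi^a_\bfX\cap\Phi^b_\bfX=\Phi^{a\cup b}_\bfX$ — and one auxiliary relative independence, which is itself an instance of the proposition of strictly smaller complexity — one then checks that this conditional expectation coincides with the conditional expectation onto a $\sigma$-algebra strictly closer to $\Psi$; the inductive hypothesis finishes the argument.

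The main obstacle is purely combinatorial: arranging the above into a genuine, terminating induction. Each invocation of Proposition~\ref{prop:first-rel-ind} carries the side conditions $e_s\cap[i;j')=\{i\}$ (for Part~(1)) or $e_s\cap(i;j']=\{j'\}$ (for Part~(2)) for \emph{every} set in the chosen sub-collection, so the pair and the sub-collection must be selected so that these hold, and one must show that every residual instance of the proposition is strictly simpler in the chosen well-founded order; the cleanest route I can see is always to work with the extreme (largest or smallest) element of $\bigcup_{s=0}^r e_s$, which makes the side conditions automatic and forces a span-type quantity to drop. An alternative, in the spirit of the proof of Proposition~\ref{prop:first-rel-ind} itself, would be to build one extension of $\bfX$ directly: a non-diagonal action of the subgroup generated by the $L_{e_0\cup e_s}$ on $(X^2,\mu\otimes_{\Phi^{e_0}_\bfX}\mu)$, with its second-coordinate action designed to collapse the $L_{e_0}$-directions and render each $f_s\circ\b_2$ invariant under $L_{e_0\cup e_s}$, lifted to a $G^d$-extension via Theorem~\ref{thm:recoverG}, after which $\Psi$-satedness yields the conclusion at once — but making $\mu\otimes_{\Phi^{e_0}_\bfX}\mu$ invariant under a large enough action via Corollary~\ref{cor:some-invce} still appears to require first disposing, by the above iteration, of the elements of the $e_s$ lying strictly inside the span of $e_0$.
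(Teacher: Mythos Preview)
Your overall plan—reformulate as relative independence and iterate Proposition~\ref{prop:first-rel-ind}—is exactly what the paper does, and your opening paragraph is correct. The gap you yourself flag as ``purely combinatorial'' is real, however, and your proposed complexity measure does not close it. The natural move is to adjoin an element $j'\in e_0$ to each of the $e_s$: Proposition~\ref{prop:first-rel-ind} replaces $g$ by $\sfE_\mu\big(g\,\big|\,\bigvee_s\Phi^{e_s\cup\{j'\}}_\bfX\big)$, and the ``auxiliary relative independence'' you need is then the instance of the proposition for the tuple $(e_0;\,e_1\cup\{j'\},\dots,e_r\cup\{j'\})$. But since $j'\in e_0$ one has $(e_s\cup\{j'\})\setminus e_0=e_s\setminus e_0$, so $\sum_s|e_s\setminus e_0|$ is unchanged, and $\max e_0-\min e_0$ is unchanged as well; your induction does not advance. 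Your ``extreme element'' suggestion does not obviously repair this either: taking the pivot to be $\max\bigcup_s e_s$ does not arrange the side conditions $e_s\cap[i;j')=\{i\}$ for \emph{every} $s$ simultaneously, which is what you need in order to apply Proposition~\ref{prop:first-rel-ind} to the full product $f_1\cdots f_r$ rather than an uncontrolled sub-collection.

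The paper runs an outer induction on $r$ and an inner induction on the cardinality of the up-set $\cal{A}_{e_0,\dots,e_r}:=\langle e_0\rangle\cup\dots\cup\langle e_r\rangle$. The pivot $\{i<j\}$ is chosen so that $i$ lies in every $e_s$, no $e_s$ meets the open interval $(i;j)$, and $j$ lies in some $e_s$ but not all; such a pair exists whenever the $e_s$ share a point but are not all equal. This choice makes the side conditions of Proposition~\ref{prop:first-rel-ind} hold for every $s$ at once, and after adjoining $j$ to every $e_s$ the up-set $\cal{A}$ strictly shrinks (each member now contains $j$, which was not previously true). If $j\in e_0$ this already gives the inductive step. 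If $j\notin e_0$, one first runs the same reasoning with $e_0$ swapped for some $e_t$ containing $j$ to replace $f_t$ by its conditional expectation on $\bigvee_{s\neq t}\Phi^{e_t\cup e_s}_\bfX$; after approximating by finite sums of products and redistributing the factors, the integral becomes one with only $r-1$ of the $f$'s, and the outer induction on $r$ finishes. This second case is also missing from your sketch.
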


As is standard, this is equivalent to the assertion that
\[\sfE_\mu\Big(g\,\Big|\,\bigvee_{s=1}^r\Phi^{e_s}_\bfX\Big) = \sfE_\mu(g\,|\,\Psi).\]

\begin{proof}
If $e_0 = e_1 = e_2 = \cdots = e_r$, then $\Phi^{e_s}_\bfX = \Psi$ for all $s$, so the result is trivial.

The general case is proved by an outer induction on $r$, and an inner induction on the cardinality of the up-set
\[\cal{A}_{e_0,e_1,\ldots,e_r} := \langle e_0\rangle \cup \cdots \cup \langle e_r\rangle.\]

The base clause corresponds to $r=1$ and $|\cal{A}_{e_0,e_1}| = 1$, hence $e_0 = e_1 = [k]$: this is among the trivial cases described above.

For the recursion clause, we may assume that there are at least two distinct sets among the $e_s$ for $0\leq s \leq r$, so $\bigcap_{0 \leq s \leq r}e_s \neq \bigcup_{0 \leq s \leq r}e_s$ (else we would be in the trivial case treated above).  We have also assumed that $\bigcap_{0 \leq  s \leq r}e_s \neq \emptyset$, so there must be $\{i < j\} \subseteq [k]$ such that $e_s \cap [i+1;j) = \emptyset$ for all $0 \leq s\leq r$, and
\begin{itemize}
\item either $i \in e_s$ for all $0 \leq s \leq r$, but $j$ lies in some $e_s$ but not all;
\item or $j \in e_s$ for all $0 \leq s \leq r$, but $i$ lies in some $e_s$ but not all.
\end{itemize}

We will complete the induction in the first of these cases, the second being exactly analogous.  In this first case we have $e_s\cap [i;j) = \{i\}$ for all $s$.  It now breaks into two further sub-cases.

\vspace{7pt}

\emph{Case 1.}\quad First assume that $j \in e_0$.  Then, since $\Phi^{e_0}_\bfX \subseteq \Phi^{\{i,j\}}_\bfX$, Part 1 of Proposition~\ref{prop:first-rel-ind} gives that
\[\int_X g\cdot (f_1\cdot \cdots \cdot f_r)\,\d\mu = \int_X\sfE_\mu\Big(g\,\Big|\,\bigvee_{s=1}^r\Phi^{e_s\cup \{j\}}_\bfX\Big)\cdot (f_1\cdot \cdots \cdot f_r)\,\d\mu.\]

Now observe that
\[\cal{A}_{e_0,e_1\cup\{j\},\ldots,e_r\cup \{j\}} \subseteq \cal{A}_{e_0,\ldots,e_r},\]
and this inclusion is strict, because the right-hand family contains some set that does not contain $j$, whereas every element of the left-hand family does contain $j$.  Therefore we may apply the inductive hypothesis to $e_0$ and $e_1\cup \{j\}$, $e_2\cup \{j\}$, \ldots, $e_r\cup\{j\}$, to conclude that
\[\sfE_\mu\Big(g\,\Big|\,\bigvee_{s=1}^r\Phi^{e_s\cup \{j\}}_\bfX\Big) = \sfE_\mu\Big(g\,\Big|\,\bigvee_{s=1}^r\Phi^{e_0 \cup e_s}_\bfX\Big) = \sfE_\mu(g\,|\,\Psi).\]

\vspace{7pt}

\emph{Case 2.}\quad Now assume that $j \not\in e_0$.  By re-labeling the other sets if necessary, we may assume $j \in e_1$. Then the argument used in Case 1 gives
\[\int_X g\cdot f_1\cdot\cdots \cdot f_r\,\d\mu = \int_X g\cdot \sfE_\mu(f_1\,|\,\Psi_1)\cdot f_2 \cdot \cdots \cdot f_r\,\d\mu,\]
where
\[\Psi_1:= \bigvee_{s\in\{0\}\cup [r]\setminus \{1\}}\Phi^{e_1\cup e_s}_\bfX,\]
and similarly
\[\int_X \sfE_\mu(g\,|\,\Psi)\cdot f_1\cdot\cdots \cdot f_r\,\d\mu = \int_X \sfE_\mu(g\,|\,\Psi)\cdot \sfE_\mu(f_1\,|\,\Psi_1)\cdot f_2\cdot \cdots \cdot f_r\,\d\mu.\]
It therefore suffices to prove the desired equality when $f_1$ is $\Psi_1$-measurable.  Since such an $f_1$ can be approximated in $\|\cdot\|_2$ by finite sums of products of $\Phi^{e_1\cup e_s}_\bfX$-measurable functions for $s \in \{0\}\cup [r]\setminus \{1\}$, it suffices furthermore to assume that
\[f_1 = f_{10}\cdot f_{12}\cdot \cdots \cdot f_{1r}\]
is one such product.

However, now the integral of interest may be written as
\[\int_X (g f_{10})\cdot (f_2f_{12})\cdot \cdots \cdot (f_rf_{1r})\,\d\mu,\]
and by the inductive hypothesis on $r$ this is equal to
\[\int_X \sfE_\mu(g f_{10}\,|\,\Psi_2)\cdot (f_2f_{12})\cdot \cdots \cdot (f_rf_{1r})\,\d\mu\]
with
\[\Psi_2:= \bigvee_{s=2}^r\Phi^{e_0\cup e_s}_\bfX.\]

Since $\Psi \geq \Psi_2$ and $f_{10}$ is $\Psi$-measurable, the Law of Iterated Conditional Expectation gives
\[\sfE_\mu(g f_{10}\,|\,\Psi_2) = \sfE_\mu(\sfE_\mu(g\,|\,\Psi) f_{10}\,|\,\Psi_2).\]
Substituting this back into the integral completes the desired equality.
\end{proof}

\begin{cor}\label{cor:second-rel-ind}
Suppose that $\bfX$ is fully sated and that $e_1$, \ldots, $e_r \in \binom{[d]}{\geq 2}$ are such that $\bigcap_{s\leq r}e_s \neq \emptyset$.  Let
\[\Psi_s:= \bigvee_{t \in [r]\setminus s} \Phi^{e_t\cup e_s}_\bfX \quad \hbox{for}\ s=1,2,\ldots,r,\]
and let $f_s$ be $\Phi^{e_s}_\bfX$-measurable for each $s$.  Then
\[\int_X \prod_{s=1}^r f_s\ \d\mu = \int_X\prod_{s=1}^r\sfE_\mu(f_s\,|\,\Psi_s)\ \d\mu.\]
\end{cor}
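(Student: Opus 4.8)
The plan is to derive this from Proposition~\ref{prop:next-rel-ind} by replacing $f_1$, \ldots, $f_r$ one at a time by the corresponding conditional expectations. Throughout, I would note that for any subfamily of $\{e_1,\ldots,e_r\}$ the intersection of its members still contains $\bigcap_{s=1}^r e_s$, hence is non-empty, so the intersection hypothesis of Proposition~\ref{prop:next-rel-ind} is automatically available whenever we invoke it.

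First I would record two easy facts about $\sfE_\mu(f_s\,|\,\Psi_s)$. It is bounded, because conditional expectation is a contraction of $L^\infty(\mu)$. It is moreover $\Phi^{e_s}_\bfX$-measurable: since $e_s \subseteq e_t\cup e_s$ for every $t$, the monotonicity $a\subseteq e \Rightarrow \Phi^a_\bfX \supseteq \Phi^e_\bfX$ recorded before the definition of ``fully sated'' gives $\Phi^{e_t\cup e_s}_\bfX \subseteq \Phi^{e_s}_\bfX$, and therefore $\Psi_s = \bigvee_{t\in[r]\setminus s}\Phi^{e_t\cup e_s}_\bfX \subseteq \Phi^{e_s}_\bfX$; so every $\Psi_s$-measurable function is $\Phi^{e_s}_\bfX$-measurable.

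The main step is an induction on $k=0,1,\ldots,r$ proving
\[
\int_X \prod_{s=1}^{k}\sfE_\mu(f_s\,|\,\Psi_s)\cdot\prod_{s=k+1}^{r}f_s\,\d\mu = \int_X \prod_{s=1}^{r}f_s\,\d\mu,
\]
with the case $k=r$ being the assertion of the corollary. The case $k=0$ is a tautology. For the passage from $k-1$ to $k$, I would apply Proposition~\ref{prop:next-rel-ind} with $g := f_k$ and $e_0 := e_k$, the remaining $r-1$ functions being $\sfE_\mu(f_1\,|\,\Psi_1)$, \ldots, $\sfE_\mu(f_{k-1}\,|\,\Psi_{k-1})$, $f_{k+1}$, \ldots, $f_r$, with respective labels $e_1$, \ldots, $e_{k-1}$, $e_{k+1}$, \ldots, $e_r$ (each in $\binom{[d]}{\geq 2}$, and indexing functions measurable in the correct $\s$-algebras by the previous paragraph). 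The intersection hypothesis holds since $e_k\cap\bigcap_{t\neq k}e_t = \bigcap_{s=1}^{r}e_s \neq \emptyset$, and the conditioning $\s$-algebra produced by the proposition is exactly $\bigvee_{t\in[r]\setminus k}\Phi^{e_k\cup e_t}_\bfX = \Psi_k$. Hence the integral is unchanged when $f_k$ is replaced by $\sfE_\mu(f_k\,|\,\Psi_k)$, which is precisely the inductive step; combined with the inductive hypothesis this gives the $k$-th identity.

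I do not expect a genuine obstacle: all the real content sits in Proposition~\ref{prop:next-rel-ind} (and hence in full satedness, which enters only through it). The only points needing care are the two already addressed --- that discarding members of $\{e_1,\ldots,e_r\}$ never destroys the non-emptiness of their intersection, so the proposition keeps applying, and that each $\sfE_\mu(f_s\,|\,\Psi_s)$ remains $\Phi^{e_s}_\bfX$-measurable and bounded, so that it can legitimately take the place of one of the $f_\bullet$'s at later stages.
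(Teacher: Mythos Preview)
Your argument is correct and is exactly the approach the paper takes: its proof is the single line ``Simply apply Proposition~\ref{prop:next-rel-ind} to each factor of the integrand in turn,'' and you have carefully unpacked what that entails, including the two verifications (preserved non-emptiness of the intersection, and $\Psi_s\subseteq\Phi^{e_s}_\bfX$ so that each $\sfE_\mu(f_s\,|\,\Psi_s)$ remains admissible) needed to make the iteration go through.
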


\begin{proof}
Simply apply Proposition~\ref{prop:next-rel-ind} to each factor of the integrand in turn.
\end{proof}

\subsection{Structure of the limit couplings}\label{subs:jointdist2}

We now turn to the structure of the limit coupling $\l$, as discussed at the beginning of this section. The following lemma and definition are again essentially copied from~\cite{Aus--newmultiSzem}.

\begin{lem}\label{lem:diag}
If $i,j \in e \in \binom{[d]}{\geq 2}$ and $A \in \Phi^e_\bfX$, then
\[\l(\pi_i^{-1}(A)\triangle \pi_j^{-1}(A)) = 0.\]
In particular, $\pi_i^{-1}(\Phi^e_\bfX)$ and $\pi_j^{-1}(\Phi_\bfX^e)$ agree up to $\l$-negligible sets.
\end{lem}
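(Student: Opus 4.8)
The plan is to read off $\l(\pi_i^{-1}(A)\triangle\pi_j^{-1}(A))$ directly from the defining limit $\l=\lim_{n\to\infty}\l_n$. We may assume $i\neq j$, and by the symmetry of the symmetric difference also $i<j$. Starting from the pointwise identity
\[1_{\pi_i^{-1}(A)\triangle\pi_j^{-1}(A)}=(1_A\circ\pi_i)+(1_A\circ\pi_j)-2(1_A\circ\pi_i)(1_A\circ\pi_j),\]
integrating against $\l$, and using that $\l$ is a $d$-fold coupling of $\mu$, so that $(\pi_i)_\ast\l=(\pi_j)_\ast\l=\mu$, the claim reduces to the identity
\[\int_{X^d}(1_A\circ\pi_i)(1_A\circ\pi_j)\,\d\l=\mu(A).\]
Since $(1_A\circ\pi_i)(1_A\circ\pi_j)$ is a product of functions each pulled back from a single coordinate, convergence of $\l_n$ to $\l$ in the coupling topology gives that the left-hand side equals $\lim_{n\to\infty}\int(1_A\circ\pi_i)(1_A\circ\pi_j)\,\d\l_n$.

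So the next step is to evaluate the integral against $\l_n$ for fixed $n$. Unwinding the definition of $\l_n$ and using the identity $1_A\circ T^g_{[1;i]}=1_{T^{g^{-1}}_{[1;i]}A}$ (and similarly with $j$), one gets
\[\int(1_A\circ\pi_i)(1_A\circ\pi_j)\,\d\l_n=\frac{1}{|F_n|}\sum_{g\in F_n}\mu\big(T^{g^{-1}}_{[1;i]}A\cap T^{g^{-1}}_{[1;j]}A\big).\]
The heart of the matter is that every summand equals $\mu(A)$. Write $h:=g^{-1}$; commutativity of the actions gives $T^h_{[1;j]}=T^h_{[1;i]}T^h_{(i;j]}$, hence
\[T^h_{[1;j]}A\,\triangle\,T^h_{[1;i]}A=T^h_{[1;i]}\big(T^h_{(i;j]}A\,\triangle\,A\big),\]
and, $T^h_{[1;i]}$ being measure-preserving, it suffices to show $\mu\big(T^h_{(i;j]}A\,\triangle\,A\big)=0$. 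Enumerating $e=\{i_1<\cdots<i_\ell\}$ and writing $i=i_s$, $j=i_t$ with $s<t$, the set $(i;j]$ is the disjoint union of the consecutive blocks $(i_k;i_{k+1}]$ for $s\leq k<t$, so $T^h_{(i;j]}$ is a finite composition of the transformations $T^h_{(i_k;i_{k+1}]}$, each of which fixes $A$ modulo $\mu$-negligible sets by the very definition of $\Phi^e_\bfX=\S^{L_e}_\bfX$. A finite composition of transformations each fixing $A$ mod $\mu$-null still fixes $A$ mod $\mu$-null, so $\mu\big(T^h_{(i;j]}A\,\triangle\,A\big)=0$ and hence $\mu\big(T^h_{[1;i]}A\cap T^h_{[1;j]}A\big)=\mu(A)$.

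Combining these observations, $\int(1_A\circ\pi_i)(1_A\circ\pi_j)\,\d\l_n=\mu(A)$ for every $n$, hence also for $\l$ in place of $\l_n$, so $\l(\pi_i^{-1}(A)\triangle\pi_j^{-1}(A))=2\mu(A)-2\mu(A)=0$. For the ``in particular'' clause I would argue formally: the collection of sets $B\in\S_\bfX$ for which $\pi_i^{-1}(B)$ and $\pi_j^{-1}(B)$ agree modulo $\l$-null sets is a $\s$-algebra, and it contains $\Phi^e_\bfX$ by what was just proved; therefore the $\l$-completions of $\pi_i^{-1}(\Phi^e_\bfX)$ and $\pi_j^{-1}(\Phi^e_\bfX)$ coincide.

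I do not anticipate a genuine obstacle here — the argument is essentially a direct computation. The one point deserving care is the claim that $T^h_{(i;j]}$ fixes $A$ modulo $\mu$-null sets when $i$ and $j$ are not adjacent elements of $e$, which uses the decomposition of $(i;j]$ into the consecutive blocks built into the definition of $L_e$ (a fact in the same spirit as Corollary~\ref{cor:some-invce}).
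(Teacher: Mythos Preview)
Your proof is correct and follows essentially the same approach as the paper: both compute $\l(\pi_i^{-1}(A)\cap\pi_j^{-1}(A))$ via the defining averages and reduce to showing that $A$ is $T_{(i;j]}$-almost-invariant. The only difference is cosmetic --- the paper cites the inclusion $\Phi^e_\bfX\leq\Phi^{\{i,j\}}_\bfX$ (equivalently $L_{\{i,j\}}\leq L_e$) directly, whereas you spell out the decomposition of $T^h_{(i;j]}$ into the block transformations $T^h_{(i_k;i_{k+1}]}$, which amounts to proving that inclusion.
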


\begin{proof}
If $i = j$ then this is trivial, so assume without loss of generality that $i < j$.  By definition,
\begin{eqnarray*}
\l(\pi_i^{-1}(A)\cap \pi_j^{-1}(A)) &=& \lim_{n\to\infty}\frac{1}{|F_n|}\sum_{g \in F_n} \mu(T_{[1;i]}^{g^{-1}}A \cap T_{[1;j]}^{g^{-1}}A)\\
&=& \lim_{n\to\infty}\frac{1}{|F_n|}\sum_{g \in F_n} \mu(T_{[1;i]}^{g^{-1}}A \cap T_{[1;i]}^{g^{-1}}T_{(i;j]}^{g^{-1}}A)\\
&=& \lim_{n\to\infty}\frac{1}{|F_n|}\sum_{g \in F_n} \mu(T_{[1;i]}^{g^{-1}}A \cap T_{[1;i]}^{g^{-1}}A) = \mu(A),
\end{eqnarray*}
because $\Phi^e_\bfX \leq \Phi^{\{i,j\}}_\bfX$, so $A$ is $T_{(i;j]}$-invariant.  Therefore
\[\l(\pi_i^{-1}(A)\cap \pi_j^{-1}(A)) = \l(\pi_i^{-1}(A)) = \l(\pi_j^{-1}(A)),\]
and so $\l(\pi_i^{-1}(A)\triangle \pi_j^{-1}(A)) = 0$.
\end{proof}

\begin{dfn}
In the setting above, the common $\l$-completion of the lifted $\s$-algebras
\[\pi_i^{-1}(\Phi^e_\bfX) \quad \hbox{for}\ i \in e\]
is called the \textbf{oblique copy} of $\Phi^e_\bfX$. It will be denoted by $\hat{\Phi}^e_\bfX$.
\end{dfn}

\begin{dfn}
If $\cal{I}$ is a non-empty up-set in $\binom{[d]}{\geq 2}$ and $\bfX$ is a $G^d$-space, then the $\cal{I}$-oblique $\s$-algebra is
\[\hat{\Phi}^{\cal{I}}_\bfX := \bigvee_{e \in \cal{I}}\hat{\Phi}^e_\bfX.\]
\end{dfn}

The remainder of the proof of Theorem B follows almost exactly the same lines as~\cite[Subsection 4.2]{Aus--thesis}; we include the following lemma again for completeness.

\begin{lem}[{C.f.~\cite[Proposition 4.2.6]{Aus--thesis}}]\label{lem:rel-ind}
If $\cal{I}$ and $\cal{J}$ are non-empty up-sets in $\binom{[d]}{\geq 2}$ and $\bfX$ is a fully sated $G^d$-space, then $\hat{\Phi}^{\cal{I}}_\bfX$ and $\hat{\Phi}^{\cal{J}}_\bfX$ are relatively independent over $\hat{\Phi}^{\cal{I}\cap \cal{J}}_\bfX$ under $\l$.
\end{lem}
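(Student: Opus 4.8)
The argument will closely follow \cite[Subsection 4.2]{Aus--thesis}: the point is to reduce this statement about the limit coupling $\lambda$ on $X^d$ to the relative-independence facts already established inside $(X,\mu)$, namely Propositions~\ref{prop:first-rel-ind} and~\ref{prop:next-rel-ind} and Corollary~\ref{cor:second-rel-ind} (all of which rest on the full satedness of $\bfX$). First I would record the reduction to generators. Since $\Phi^a_\bfX \supseteq \Phi^e_\bfX$ whenever $\emptyset \neq a \subseteq e$, Lemma~\ref{lem:diag} shows that $\hat{\Phi}^e_\bfX \subseteq \hat{\Phi}^a_\bfX$ modulo $\lambda$ in the same situation; hence $\hat{\Phi}^{\cal{I}}_\bfX$ is generated modulo $\lambda$ by the algebras $\hat{\Phi}^e_\bfX$ with $e$ ranging over the finite antichain $\min\cal{I}$ of minimal elements of $\cal{I}$, and Lemma~\ref{lem:diag} also lets me realise each $\hat{\Phi}^e_\bfX$ as $\pi_i^{-1}(\Phi^e_\bfX)$ for \emph{any} chosen $i \in e$. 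By multilinearity and $L^2(\lambda)$-density it then suffices to check, for test functions $F = \prod_{e \in \min\cal{I}}(g_e\circ \pi_{i(e)})$ and $H = \prod_{f \in \min\cal{J}}(h_f\circ \pi_{j(f)})$ with $g_e$ bounded and $\Phi^e_\bfX$-measurable, $h_f$ bounded and $\Phi^f_\bfX$-measurable, and representatives $i(e) \in e$, $j(f) \in f$ chosen conveniently, that $\int_{X^d} F H\,\d\lambda = \int_{X^d} \widetilde{F}\cdot H\,\d\lambda$ for some $\widetilde{F}$ that is $\hat{\Phi}^{\cal{I}\cap\cal{J}}_\bfX$-measurable; this is equivalent to the asserted relative independence.

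To produce such a $\widetilde{F}$, I would pass to $\mu$-averages: choosing any coordinate set $S \subseteq [d]$ that contains all the $i(e)$ and $j(f)$ (and, to be safe, $\bigcup_{e \in \min\cal{I}} e \cup \bigcup_{f \in \min\cal{J}} f$), grouping the factors of $FH$ by coordinate into functions $\phi_c$, $c \in S$, and using $\lambda_n \to \lambda$, one has $\int_{X^d}FH\,\d\lambda = \lim_n \frac{1}{|F_n|}\sum_{g \in F_n}\int_X \prod_{c \in S}(\phi_c \circ T_{[1;c]}^g)\,\d\mu$. Proposition~\ref{prop:char-factors-again} applies verbatim to this average and lets me replace each $\phi_c$ by $\sfE_\mu(\phi_c \,|\, \Delta_c)$ without changing the limit, where — since $\S^{T_{(a;b]}}_\bfX = \Phi^{\{a,b\}}_\bfX$ — the relevant characteristic $\sigma$-algebra is $\Delta_c = \bigvee_{c' \in S \setminus \{c\}}\Phi^{\{c,c'\}}_\bfX$. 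Everything is now localised at a single coordinate $c$: the sets $e$ with $i(e)=c$, the sets $f$ with $j(f)=c$, and all the $\{c,c'\}$ contain $c$, so the common-element hypotheses of Propositions~\ref{prop:first-rel-ind} and~\ref{prop:next-rel-ind} and Corollary~\ref{cor:second-rel-ind} hold automatically; these let me strip the $\cal{J}$-factors $h_f$ back out of $\sfE_\mu(\phi_c\,|\,\Delta_c)$ and rewrite the remaining conditional expectation of the $\cal{I}$-factors onto a join of algebras $\Phi^{e \cup f}_\bfX$. Reversing the $\lambda_n$-expansion then exhibits $\int_{X^d}FH\,\d\lambda$ as $\int_{X^d}\widetilde{F}\cdot H\,\d\lambda$ with $\widetilde{F}$ an explicit product of coordinate-lifted functions built from these conditional expectations.

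An induction on $|\cal{I}| + |\cal{J}|$ then closes the argument: if $\cal{I} \subseteq \cal{J}$ or $\cal{J} \subseteq \cal{I}$ the claim is trivial, and otherwise one peels off a minimal element $e_0 \in \min\cal{I} \setminus \cal{J}$, notes that $(\cal{I}\setminus\{e_0\}) \cap \cal{J} = \cal{I}\cap\cal{J}$, and combines the rewriting above with the inductive hypothesis applied to $\cal{I}\setminus\{e_0\}$ and $\cal{J}$; finally Lemma~\ref{lem:diag} is used once more to recognise $\widetilde{F}$ as $\hat{\Phi}^{\cal{I}\cap\cal{J}}_\bfX$-measurable. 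I expect the main obstacle to be exactly this combinatorial bookkeeping: the representatives $i(e)$, the set $S$, and the order in which Propositions~\ref{prop:first-rel-ind} and~\ref{prop:next-rel-ind} and Corollary~\ref{cor:second-rel-ind} are invoked must all be chosen so that the unions of sets that accumulate are still members of $\cal{I} \cap \cal{J}$ rather than of $\cal{I}$ or $\cal{J}$ alone — only then is the $\sigma$-algebra carrying $\widetilde{F}$ generated by oblique copies $\hat{\Phi}^a_\bfX$ with $a \in \cal{I}\cap\cal{J}$, which is what makes the conditional-expectation identity valid. With that skeleton in place, the remainder is the routine shuffling of relative products and iterated conditional expectations, exactly as in \cite[Subsection 4.2]{Aus--thesis}.
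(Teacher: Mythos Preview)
Your proposal assembles the right ingredients --- Lemma~\ref{lem:diag}, Proposition~\ref{prop:char-factors-again}, and the $\mu$-level relative-independence results --- but the central ``rewriting'' step has a real gap, not just bookkeeping. After you apply Proposition~\ref{prop:char-factors-again}, each coordinate carries $\sfE_\mu(\phi_c\,|\,\Delta_c)$ where $\phi_c$ is a \emph{product} of $\cal{I}$-factors $g_e$ and $\cal{J}$-factors $h_f$. You then claim that Propositions~\ref{prop:first-rel-ind},~\ref{prop:next-rel-ind} and Corollary~\ref{cor:second-rel-ind} let you ``strip the $\cal{J}$-factors $h_f$ back out'' of this conditional expectation. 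But those results govern either a \emph{single} $\Phi^{e_0}$-measurable function against a join, or integrals of products; none of them decomposes $\sfE_\mu(g\cdot h\,|\,\Delta_c)$ into pieces you can separate, and conditional expectation does not respect products. Your induction on $|\cal{I}|+|\cal{J}|$ does not rescue this, because the peeling step still presupposes that the direct rewriting already succeeds for a single $e_0$.

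The paper's proof sidesteps exactly this obstacle by a different two-stage organisation. Step~1 treats only the special case $\cal{J}=\langle e\rangle$ with $e$ \emph{maximal} in $\binom{[d]}{\geq 2}\setminus\cal{I}$: then $\hat{\Phi}^{\cal{J}}_\bfX$ is generated by a single coordinate lift $\pi_i^{-1}(\Phi^e_\bfX)$, so after invoking Proposition~\ref{prop:char-factors-again} only \emph{one} function $1_B$ needs conditioning, and Corollary~\ref{cor:second-rel-ind} applies cleanly. The maximality of $e$ is the crucial combinatorial point --- it guarantees $e\cup\{j\}\in\cal{I}$ for every $j\notin e$, so that $\cal{I}\cap\langle e\rangle$ is generated by exactly those unions. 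Step~2 then reduces an arbitrary $\cal{J}$ to this case by removing one $e\in\cal{J}\setminus\cal{I}$ at a time (chosen minimal of maximal size, to force the maximality hypothesis of Step~1 relative to $\cal{I}\cup\cal{K}$), working entirely with iterated conditional expectations at the $\l$-level and never returning to $\mu$-averages. This separation --- one descent to $\mu$ for a single generator, then pure $\l$-manipulation --- is what makes the argument go through.
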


\begin{proof}
\emph{Step 1.} \quad Suppose first that $\J = \langle e\rangle$ where $e$ is a maximal member of $\binom{[d]}{\geq 2}\setminus \I$.  Let $\{a_1,a_2,\ldots,a_m\}$ be the antichain of minimal elements of $\I$, so that $\hat{\Phi}^\I_\bfX = \bigvee_{k\leq m}\hat{\Phi}^{a_k}_\bfX$. The maximality assumption on $e$ implies that $e\cup \{j\}$ contains some $a_k$ for every $j \in [d]\setminus e$, and so $\I\cap \J$ is precisely the up-set generated by these sets $e\cup \{j\}$ for $j\in[d]\setminus e$.  We must therefore show that $\hat{\Phi}^e_\bfX$ is relatively independent from $\bigvee_{k\leq m}\hat{\Phi}^{a_k}_\bfX$ under $\l$ over the $\s$-subalgebra $\bigvee_{j\in [d]\setminus e}\hat{\Phi}^{e\cup\{j\}}_\bfX$.

Since $e \not\in \I$ we can find some $j_k \in
a_k\setminus e$ for each $k\leq m$.  Moreover, each $j\in
[d]\setminus e$ must appear as some $j_k$ in this list, since it
appears for any $k$ for which $a_k \subseteq e \cup \{j\}$.

Now Lemma~\ref{lem:diag} implies that $\hat{\Phi}^{a_k}_\bfX$ agrees with $\pi_{j_k}^{-1}(\Phi^{a_k}_\bfX)$ up to $\l$-negligible sets.  On the other hand, we clearly have $\pi_{j_k}^{-1}(\Phi^{a_k}_\bfX)\leq \pi_{j_k}^{-1}(\S_\bfX)$, and so in fact it will suffice to show that $\hat{\Phi}^e_\bfX$ is relatively independent from $\bigvee_{j\in [d]\setminus e}\pi_j^{-1}(\S_\bfX)$ over $\bigvee_{j\in [d]\setminus e}\hat{\Phi}^{e\cup\{j\}}_\bfX$.

Picking $i \in e$, Lemma~\ref{lem:diag} also gives that $\hat{\Phi}^e_\bfX$ agrees with $\pi_i^{-1}(\Phi^e_\bfX)$ up to $\l$-negligible sets.  On the other hand, for any sets
\[A_j \in \pi_j^{-1}(\S_\bfX)\ \hbox{for}\ j \in [d]\setminus e\ \hbox{and}\ B \in \Phi^e_\bfX,\]
the definition of $\l$ gives
\[\l\Big(\Big(\bigcap_{j \in [d]\setminus e}\pi_j^{-1}(A_j)\Big)\cap \pi_i^{-1}(B)\Big) = \lim_{n\to\infty}\int_X \L_n(f_1,\ldots,f_d)\,\d\mu\]
with
\[f_\ell := \left\{\begin{array}{ll}1_{A_\ell} & \quad \hbox{if}\ \ell \in [d]\setminus e\\ 1_B & \quad \hbox{if}\ \ell = i\\ 1 & \quad\hbox{else.} \end{array}\right.\]
By Proposition~\ref{prop:char-factors-again}, one obtains the same limit from
\[\lim_{n\to\infty}\int_X \L_n(f_1',\ldots,f_d')\,\d\mu\]
with
\[f'_\ell := \left\{\begin{array}{ll}1_{A_\ell} & \quad \hbox{if}\ \ell \in [d]\setminus e \\ \sfE_\mu(1_B\,|\,\Delta) & \quad \hbox{if}\ \ell = i\\ 1 & \quad\hbox{else,} \end{array}\right.\]
and where in turn
\[\Delta := \bigvee_{\ell \in [d]\setminus e,\,\ell < i}\S^{T_{(\ell;i]}}_\bfX \vee \bigvee_{\ell \in [d]\setminus e,\, \ell > i}\S^{T_{(i;\ell]}}_\bfX = \bigvee_{j \in [d]\setminus e}\Phi^{\{i,j\}}_\bfX.\]

This implies that $\pi_i^{-1}(\Phi^e_\bfX)$ is relatively independent from $\bigvee_{j\in [d]\setminus e}\pi_j^{-1}(\S_\bfX)$ over $\pi_i^{-1}(\Delta)$ under $\l$.  On the other hand, Corollary~\ref{cor:second-rel-ind} gives that $\Phi^e_\bfX$ is relatively independent from $\Delta$ over $\bigvee_{j\in [d]\setminus e}\Phi^{e\cup\{j\}}_\bfX$.  Combining these conclusions completes the proof in this case.

\vspace{7pt}

\emph{Step 2.}\quad The general case can now be treated for fixed
$\I$ by induction on $\J$. If $\J \subseteq \cal{I}$ then the
result is clear, so now let $e$ be a minimal member of
$\J\setminus\cal{I}$ of maximal size, and let $\cal{K} :=
\J \setminus\{e\}$.  It will suffice to prove that if $F \in
L^\infty(\l)$ is $\hat{\Phi}^\J_\bfX$-measurable,
then
\[\sfE_\l(F\,|\,\hat{\Phi}^\cal{I}_\bfX) = \sfE_\l(F\,|\,\hat{\Phi}^{\cal{I}\cap\J}_\bfX).\]
Furthermore, by an approximation in $\|\cdot\|_2$ by finite sums
of products, it suffices to prove this only for $F$ that are of the form $F_1\cdot
F_2$ with $F_1$ and $F_2$ being bounded and respectively
$\hat{\Phi}^{\langle e \rangle}_\bfX$- and
$\hat{\Phi}^{\cal{K}}_\bfX$-measurable. However, for such a product
we can write
\[\sfE_\l(F\,|\,\hat{\Phi}^{\cal{I}}_\bfX) =
\sfE_\l\big(\sfE_\l(F\,|\,\hat{\Phi}^{\cal{I}\cup\cal{K}}_\bfX)\,\big|\,\hat{\Phi}^{\cal{I}}_\bfX\big)
=
\sfE_\l\big(\sfE_\l(F_1\,|\,\hat{\Phi}^{\cal{I}\cup\cal{K}}_\bfX)\cdot
F_2\,\big|\,\hat{\Phi}^{\cal{I}}_\bfX\big).\]
By Step 1 we have
\[\sfE_\l(F_1\,|\,\hat{\Phi}^{\cal{I}\cup\cal{K}}_\bfX)
= \sfE_\l(F_1\,|\,\hat{\Phi}^{(\cal{I}\cup\cal{K})\cap\langle
e\rangle}_\bfX),\]
and on the other hand $(\cal{I}\cup\cal{K})\cap\langle
e\rangle \subseteq \cal{K}$ (because $\cal{K}$ contains every
subset of $[d]$ that strictly includes $e$, since $\J$ is an
up-set).  Therefore $(\cal{I}\cup\cal{K})\cap\langle
e\rangle = \K\cap \langle e\rangle$ and therefore another appeal to Step 1 gives
\[\sfE_\l(F_1\,|\,\hat{\Phi}^{(\cal{I}\cup\cal{K})\cap\langle
e\rangle}_\bfX) =
\sfE_\l(F_1\,|\,\hat{\Phi}^{\cal{K}}_\bfX).\]
Therefore the above expression for
$\sfE_\l(F_1F_2\,|\,\hat{\Phi}^{\cal{I}}_\bfX)$
simplifies to
\begin{multline*}
\sfE_\l\big(\sfE_\l(F_1\,|\,\hat{\Phi}^{\cal{K}}_\bfX)\cdot
F_2\,\big|\,\hat{\Phi}^{\cal{I}}_\bfX\big) =
\sfE_\l\big(\sfE_\l(F_1\cdot
F_2\,|\,\hat{\Phi}^{\cal{K}}_\bfX)\,\big|\,\hat{\Phi}^{\cal{I}}_\bfX\big)\\
= \sfE_\l\big(\sfE_\l(F\,|\,\hat{\Phi}^{\cal{K}}_\bfX)\,\big|\,\hat{\Phi}^{\cal{I}}_\bfX\big)
= \sfE_\l(F\,|\,\hat{\Phi}^{\cal{I}\cap
\cal{K}}_\bfX) =
\sfE_\l(F\,|\,\hat{\Phi}^{\cal{I}\cap\J}_\bfX),
\end{multline*}
where the third equality follows by the inductive hypothesis applied to $\cal{K}$ and $\cal{I}$.
\end{proof}

\begin{proof}[Proof of Theorem~\ref{thm:struct-of-coupling}]\emph{Part (1).}\quad Since $\bfX$ is fully sated, in particular it is sated with respect to the functorial $\s$-subalgebra
\[\bigvee_{\ell=1}^{i-1}\S^{T_{(\ell;i]}}_\bfX \vee \bigvee_{\ell=i+1}^d\S^{T_{(i;\ell]}}_\bfX = \bigvee_{\ell=1}^{i-1}\Phi^{\{\ell,i\}}_\bfX \vee \bigvee_{\ell=i+1}^d\Phi^{\{i,\ell\}}_\bfX = \Phi_\bfX^{\langle i\rangle}\]
for each $1 \leq i \leq d$.   Therefore Proposition~\ref{prop:char-factors-again} gives
\begin{eqnarray*}
\int_{X^d} f_1\otimes \cdots \otimes f_d\,\d\l &=& \lim_{n\to\infty} \frac{1}{|F_n|}\sum_{g \in F_n} \int_X \prod_{i=1}^d (f_i\circ T_{[1;i]}^g)\,\d\mu\\
&=& \lim_{n\to\infty} \frac{1}{|F_n|}\sum_{g \in F_n} \int_X \prod_{i=1}^d(\sfE_\mu(f_i\,|\,\Phi^{\langle i\rangle}_\bfX)\circ T_{[1;i]}^g)\,\d\mu\\
 &=& \int_{X^d} \sfE_\mu(f_1\,|\,\Phi^{\langle 1\rangle}_\bfX)\otimes \cdots \otimes \sfE_\mu(f_d\,|\,\Phi^{\langle d\rangle}_\bfX) \,\d\l
\end{eqnarray*}
for any $f_1$, \ldots, $f_d \in L^\infty(\mu)$.

\vspace{7pt}

\emph{Part (2).}\quad Property (i) of Proposition~\ref{prop:infremove} holds by construction of the $\s$-algebras $\Phi^e_\bfX$; property (ii) is given by Lemma~\ref{lem:diag}; and property (iii) is given by Lemma~\ref{lem:rel-ind}.
\end{proof}

\bibliographystyle{alpha}
\bibliography{bibfile}

\def\cprime{$'$}
\begin{thebibliography}{Aus10b}

\bibitem[Aus]{Aus--lindeppleasant1}
Tim Austin.
\newblock Pleasant extensions retaining algebraic structure, {I}.
\newblock Preprint, available online at \verb|arXiv.org|: 0905.0518.

\bibitem[Aus09]{Aus--nonconv}
Tim Austin.
\newblock On the norm convergence of nonconventional ergodic averages.
\newblock {\em Ergodic Theory Dynam. Systems}, 30(2):321--338, 2009.

\bibitem[Aus10a]{Aus--newmultiSzem}
Tim Austin.
\newblock Deducing the multidimensional {S}zemer\'edi theorem from an
  infinitary removal lemma.
\newblock {\em J. d'Analyse Math.}, 111:131--150, 2010.

\bibitem[Aus10b]{Aus--thesis}
Timothy~Derek Austin.
\newblock {\em Multiple recurrence and the structure of probability-preserving
  systems}.
\newblock ProQuest LLC, Ann Arbor, MI, 2010.
\newblock Thesis (Ph.D.)--University of California, Los Angeles.

\bibitem[BH92]{BerHin92}
Vitaly Bergelson and Neil Hindman.
\newblock Some topological semicommutative van der {W}aerden type theorems and
  their combinatorial consequences.
\newblock {\em J. London Math. Soc. (2)}, 45(3):385--403, 1992.

\bibitem[BM07]{BerMcC07}
V.~Bergelson and R.~McCutcheon.
\newblock Central sets and a non-commutative {R}oth theorem.
\newblock {\em Amer. J. Math.}, 129(5):1251--1275, 2007.

\bibitem[BMZ97]{BerMcCZha97}
Vitaly Bergelson, Randall McCutcheon, and Qing Zhang.
\newblock A {R}oth theorem for amenable groups.
\newblock {\em Amer. J. Math.}, 119(6):1173--1211, 1997.

\bibitem[FK78]{FurKat78}
Hillel Furstenberg and Yitzhak Katznelson.
\newblock An ergodic {S}zemer\'edi {T}heorem for commuting transformations.
\newblock {\em J. d'Analyse Math.}, 34:275--291, 1978.

\bibitem[Gla03]{Gla03}
Eli Glasner.
\newblock {\em Ergodic {T}heory via {J}oinings}.
\newblock American {M}athematical {S}ociety, {P}rovidence, 2003.

\bibitem[HK05]{HosKra05}
Bernard Host and Bryna Kra.
\newblock Nonconventional ergodic averages and nilmanifolds.
\newblock {\em Ann. Math.}, 161(1):397--488, 2005.

\bibitem[Hos09]{Hos09}
Bernard Host.
\newblock Ergodic seminorms for commuting transformations and applications.
\newblock {\em Studia Math.}, 195(1):31--49, 2009.

\bibitem[Tao06]{Tao06--hyperreg}
Terence Tao.
\newblock Szemer\'edi's regularity lemma revisited.
\newblock {\em Contrib. Discrete Math.}, 1(1):8--28, 2006.

\bibitem[Tao07]{Tao07}
Terence Tao.
\newblock A correspondence principle between (hyper)graph theory and
  probability theory, and the (hyper)graph removal lemma.
\newblock {\em J. d'Analyse Math.}, 103:1--45, 2007.

\bibitem[Wal12]{Walsh12}
Miguel~N. Walsh.
\newblock Norm convergence of nilpotent ergodic averages.
\newblock {\em Ann. of Math. (2)}, 175(3):1667--1688, 2012.

\bibitem[ZK]{Zor13}
Pavel Zorin-Kranich.
\newblock Norm convergence of multiple ergodic averages on amenable groups.
\newblock To appear, \emph{J. d'Analyse Math.}

\end{thebibliography}

\end{document}